\newcommand{\fm}[1]{{#1}}
\theoremstyle{plain}
\newtheorem{theorem}{Theorem}
\newtheorem{lemma}{Lemma}
\newtheorem{remark}{Remark}
\newtheorem{definition}{Definition}
\newcommand{\N}{\mathbb{N}}
\newcommand{\R}{\mathbb{R}}
\newcommand{\dif}{\mathrm{d}}
\newcommand{\dt}{\dif t}
\newcommand{\dx}{\dif x}
\newcommand{\dz}{\dif z}
\newcommand{\ddt}{\frac{\dif}{\dt}}
\newcommand{\diverg}{\mathop{\mathrm{div}}}
\newcommand{\supp}{\mathop{\text{supp}}}
\newcommand{\rot}{\nabla \times}
\newcommand{\tanpart}[1]{\left[ #1 \right]_{\textrm{tan}}}
\newcommand{\ue}{u^\varepsilon}
\newcommand{\uinit}{{u_{*}}}
\newcommand{\Rem}{{r^\varepsilon}}
\newcommand{\Omext}{\mathcal{O}}
\newcommand{\BorderExt}{\partial\mathcal{O}}
\newcommand{\BorderNav}{\partial\Omega \setminus \Gamma}
\newcommand{\use}{u^{(\varepsilon)}}
\newcommand{\eval}[1]{\left\{#1\right\}}
\newcommand{\Hspace}{L^2_\gamma(\Omega)}
\newcommand{\ldiv}{L^2_{\mathrm{div}}(\Omext)}
\newcommand{\MW}{M_\mathrm{w}}
\newcommand{\fourierz}{\zeta}
\newcommand{\force}{\xi}
\newcommand{\controlspaceforce}{H^1((0,T), L^2(\Omext)) \cap 
\mathcal{C}^0([0,T],H^1(\Omext))}
\title{\fm{Small-time} global exact controllability of the Navier-Stokes 
equation with Navier slip-with-friction boundary conditions\thanks{Work 
supported by ERC Advanced Grant 266907 (CPDENL) of 
the 7th Research Framework Programme (FP7).}}
\author{Jean-Michel Coron\thanks{Sorbonne Universit\'es, UPMC Univ Paris 
06, Laboratoire Jacques-Louis Lions, UMR CNRS 7598.},
Frédéric Marbach\footnotemark[2]~~and
Franck Sueur\thanks{Institut de Mathématiques de Bordeaux, 
UMR CNRS 5251, Université de Bordeaux.}}
\begin{document}

\maketitle

\begin{abstract}
In this work, we investigate the \fm{small-time} global exact controllability 
\fm{of} the Navier-Stokes equation, both towards the null equilibrium state and 
towards weak trajectories. We consider a viscous incompressible fluid evolving 
within a smooth bounded domain, either in 2D or in 3D. The controls are only 
located on a small part of the boundary, intersecting all its connected 
components. On the remaining parts of the boundary, the fluid obeys a Navier 
slip-with-friction boundary condition. Even though viscous boundary layers 
appear near these uncontrolled boundaries, we prove that \fm{small-time} global 
exact controllability holds. Our analysis relies on the controllability of the 
Euler equation combined with asymptotic boundary layer expansions. Choosing the 
boundary controls with care enables us to guarantee good dissipation properties 
for the residual boundary layers, which can then be exactly canceled using 
local techniques.
\end{abstract}

% ==============================================================================
\section{Introduction}
% ==============================================================================

% ==============================================================================
\subsection{Description of the fluid system}

We consider a smooth bounded connected domain~$\Omega$ in $\R^d$, with 
$d = 2$ or $d = 3$.
Although some drawings will depict~$\Omega$ as a very simple domain, 
we do not make any other topological assumption on~$\Omega$. Inside this 
domain, an incompressible viscous fluid evolves under the Navier-Stokes 
equations. We will name~$u$ its velocity field and $p$ the associated pressure. 
We assume that we are able to act on the fluid flow only on a open part 
$\Gamma$ of the full boundary~$\partial\Omega$, where $\Gamma$ 
intersects all connected components of $\partial\Omega$ (this geometrical 
hypothesis is used in the proofs of Lemma~\ref{lemma.euler}). On the remaining 
part of the boundary, $\BorderNav$, we assume that 
the fluid flow satisfies Navier slip-with-friction boundary conditions.
Hence, $(u,p)$ satisfies:
\begin{equation} \label{eq.main}
    \left\{
    \begin{aligned}
        \partial_t u + (u \cdot \nabla) u - \Delta u + \nabla p & = 0
        && \quad \textrm{in } \Omega, \\
        \diverg u & = 0
        && \quad \textrm{in } \Omega, \\
        u \cdot n & = 0
        && \quad \textrm{on } \BorderNav, \\
        N(u) & = 0
        && \quad \textrm{on } \BorderNav.
    \end{aligned}
    \right.
\end{equation}
Here and in the sequel, $n$ denotes the outward pointing normal to the domain. 
For a vector field $f$, we introduce~$\tanpart{f}$ its tangential part, 
$D(f)$ the rate of strain tensor (or shear stress) and $N(f)$ the tangential 
Navier boundary operator defined as:
\begin{align} 
  \label{eq.def.tanpart}
  \tanpart{f} &:= f - (f \cdot n) n, \\
  \label{eq.def.shear}
  D_{ij}(f) &:= \frac{1}{2}\left(\partial_i f_j + \partial_j f_i\right), \\
  \label{eq.def.navier}
  N(f) &:= \tanpart{D(f)n + Mf}. 
\end{align}
Eventually, in~\eqref{eq.def.navier}, $M$ is a smooth matrix valued function, 
describing the friction near the boundary. This is a generalization of the 
usual condition involving a single scalar parameter $\alpha \geq 0$ (i.e. 
$M = \alpha I_d$). For flat boundaries, such a scalar coefficient measures the 
amount of friction. When $\alpha = 0$ and the boundary is flat, the fluid slips 
along the boundary without friction. When 
$\alpha \rightarrow +\infty$, the friction is so intense that the fluid is 
almost at rest near the boundary \fm{and, as shown by Kelliher 
in~\cite{MR2217315}, the Navier condition 
$\tanpart{D(u)n + \alpha u} = 0$ converges to the usual Dirichlet condition.}

% ==============================================================================
\subsection{Controllability problem and main result}

Let $T$ be an allotted positive time (possibly very small) and $\uinit$ an 
initial data (possibly very large). The question of \fm{small-time} global 
exact 
null controllability asks whether, for any $T$ and any $\uinit$, there exists a 
trajectory $u$ (in some appropriate functional space) defined on $[0,T] \times 
\Omega$, which is a solution to~\eqref{eq.main}, satisfying $u(0, \cdot) = 
\uinit$ and $u(T,\cdot) = 0$. In this formulation, system~\eqref{eq.main} is 
seen as an \fm{underdetermined} system. The controls used are the implicit 
boundary 
conditions on~$\Gamma$ and can be recovered from the constructed trajectory 
\emph{a posteriori}.

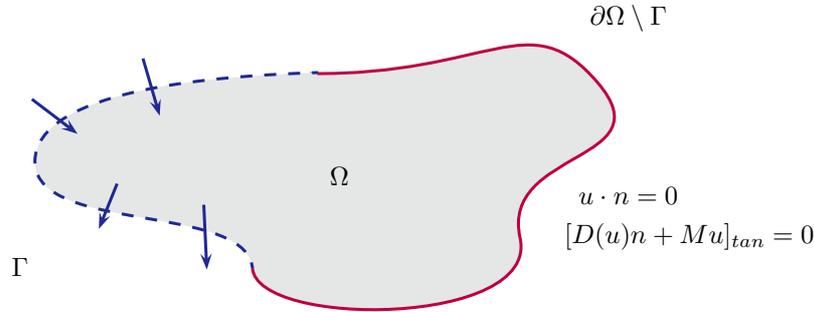
\begin{figure}[!ht]
 \centering
 \begin{pspicture}(-0.5,-2.236719)(11.0,2.2567186)
 \definecolor{gray}{rgb}{0.8980392156862745,0.9058823529411765,0.9058823529411765}
 \definecolor{blue}{rgb}{0.11764705882352941,0.1607843137254902,0.5725490196078431}
 \psbezier[linewidth=0.04,linecolor=purple,fillstyle=solid,fillcolor=gray](3.0554688,-1.4167187)(3.1154687,-2.2167187)(6.835469,-2.1967187)(6.5754685,-1.0167187)(6.315469,0.16328125)(8.4982395,-0.021577183)(7.595469,1.0432812)(6.6926975,2.1081395)(6.2,1.1367188)(3.8154688,1.1632812)
 \psbezier[linewidth=0.04,linecolor=blue,fillstyle=solid,fillcolor=gray,linestyle=dashed,dash=0.16cm
 0.16cm](3.0554688,-1.4167187)(2.9754689,-0.5367187)(0.14229742,-0.9753041)(0.19546875,0.0232812)(0.2486401,1.0218666)(2.9354687,1.1432812)(3.92,1.1767187)
 \psline[linewidth=0.04cm,linecolor=blue,arrowsize=0.073cm 
 2.0,arrowlength=1.4,arrowinset=0.4]{->}(1.6154687,1.3632811)(1.8354688,0.6032812)
 \psline[linewidth=0.04cm,linecolor=blue,arrowsize=0.073cm 
 2.0,arrowlength=1.4,arrowinset=0.4]{<-}(1.0354688,-0.8967188)(1.2754687,-0.2967188)
 \psline[linewidth=0.04cm,linecolor=blue,arrowsize=0.073cm 
 2.0,arrowlength=1.4,arrowinset=0.4]{<-}(0.7554688,0.3632812)(0.15546876,0.82328117)
 \psline[linewidth=0.04cm,linecolor=blue,arrowsize=0.073cm 
 2.0,arrowlength=1.4,arrowinset=0.4]{->}(2.4154687,-0.5767188)(2.4554687,-1.4367187)
 \usefont{T1}{ptm}{m}{n}
 \rput(4.2,-0.20){$\Omega$}
 \rput(8.0,+1.90){$\partial \Omega \setminus \Gamma$}
 \rput(8.0,-0.45){$u\cdot n = 0$}
 \rput(8.8,-1.00){$[D(u)n+M u]_{tan} = 0$}
 \rput(0.0,-1.40){$\Gamma$}
 \end{pspicture}
 \caption{Setting of the main Navier-Stokes control problem.}
\end{figure}

We define the space $\Hspace$ as the closure in $L^2(\Omega)$ of smooth 
divergence free vector fields which are tangent to~$\BorderNav$. For
$f \in \Hspace$, we do not require that $f \cdot n = 0$ on the controlled 
boundary $\Gamma$. \fm{Of course, due to the Stokes theorem, such functions
satisfy $\int_{\Gamma} f \cdot n = 0$.}
The main result of this paper is the following \fm{small-time} global 
exact
null controllability theorem:

\begin{theorem} \label{thm.weak.null}
 Let $T > 0$ and $\uinit \in \Hspace$. There exists 
 $u \in \mathcal{C}_w^0([0,T];\Hspace) \cap L^2((0,T); H^1(\Omega))$ 
 a weak controlled trajectory (see Definition~\ref{def.weak.controlled}) 
 \fm{of}~\eqref{eq.main} satisfying  $u(0,\cdot) = \uinit$ and $u(T, \cdot) = 
 0$.
\end{theorem}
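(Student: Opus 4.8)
The plan is to follow the standard "return method" strategy of Coron combined with a well-prepared boundary layer expansion, reducing the global exact null controllability to (i) the global controllability of the Euler equation and (ii) a local controllability result for Navier-Stokes near zero. The first step is a time-rescaling and amplitude-rescaling argument: replacing $u$ by $u^\varepsilon(t,x) = \varepsilon u(\varepsilon t, x)$ (up to a suitable reparametrisation), one sees that small-time controllability with large data is morally equivalent to large-time controllability with small data, at the price of a small viscosity $\varepsilon$ in front of $\Delta u$. So I would fix the target time, rescale, and work on a fixed time interval $[0,T]$ with the equation $\partial_t u^\varepsilon + (u^\varepsilon\cdot\nabla)u^\varepsilon - \varepsilon \Delta u^\varepsilon + \nabla p^\varepsilon = 0$, where now $\varepsilon$ is a small parameter we control.

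Next I would build an approximate trajectory $\uapp$. The leading order term is a solution of the Euler equation that is globally controlled from $\uinit$ to $0$ in time, say, $T/2$, then held at $0$; this is where Lemma~\ref{lemma.euler} and the geometric hypothesis that $\Gamma$ meets every connected component of $\partial\Omega$ enter. Since the Euler solution does not satisfy the Navier slip-with-friction condition on $\BorderNav$, one must correct it with a boundary layer profile of Prandtl type, of width $\sqrt{\varepsilon}$, governed (at leading order) by a linear heat equation in the normal variable with the friction matrix $M$ entering the boundary condition. The crucial design choice — and I expect this to be the main obstacle — is to select the boundary control (equivalently, the tangential Euler flow near $\BorderNav$) so that this boundary layer has good dissipation: one wants the layer profile to be driven to a negligible size by the end of the Euler phase, exploiting the dissipative structure of the layer equation together with, if needed, a transport/convection mechanism pushing the layer toward $\Gamma$ where it can exit the domain. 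Getting quantitative smallness of the residual layer, uniformly as $\varepsilon \to 0$, and controlling the commutator/interaction terms in the expansion, is the technical heart.

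Then I would estimate the remainder $\Rem := \ue - \uapp$. Plugging $\uapp = $ (Euler part) $+$ (boundary layer part) $+$ (lower-order correctors) into the Navier-Stokes equation produces a source term that, by construction, is small in the appropriate norm (here a weighted $L^2$-in-time, $L^2$-in-space estimate, using that the layer has width $\sqrt\varepsilon$ so its $L^2$ norm carries a factor $\varepsilon^{1/4}$, and its time-integrated dissipation is favorable). An energy estimate on the equation satisfied by $\Rem$, absorbing the transport term $(\Rem\cdot\nabla)\uapp$ via Grönwall and using the Navier boundary condition to handle the boundary integral $\int_{\BorderNav} (D(\Rem)n)\cdot\Rem$ (which is where the slip-with-friction condition, rather than Dirichlet, makes the estimate work without a boundary-layer blow-up in $\Rem$ itself), shows that $\Rem$ stays small on $[0,T/2]$ and in particular $\ue(T/2,\cdot)$ is small in $\Hspace$.

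Finally, I would invoke a local null controllability result for the Navier-Stokes system with these boundary conditions around the zero state: since $\ue(T/2,\cdot)$ is small, on the remaining interval $[T/2, T]$ one can steer it exactly to $0$. Undoing the rescaling gives the claimed weak controlled trajectory $u \in \mathcal{C}_w^0([0,T];\Hspace) \cap L^2((0,T);H^1(\Omega))$ with $u(0,\cdot)=\uinit$ and $u(T,\cdot)=0$; the regularity class is exactly that provided by the weak-solution energy estimates, so the trajectory we constructed lies in the right space and the proof concludes. The delicate points to watch are the compatibility of the Euler control with the layer dissipation requirement, the uniform-in-$\varepsilon$ remainder estimate, and matching the functional setting of the local result to the (low) regularity of $\ue(T/2,\cdot)$.
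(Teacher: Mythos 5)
Your outline captures the broad architecture (time-rescaling, Euler control by return method, a Prandtl-type layer corrector, remainder energy estimate, final local step), but it misidentifies the mechanism by which the residual boundary layer is made small, and this is precisely the crux of the paper.

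You propose to "work on a fixed time interval $[0,T]$" after the rescaling $u^\varepsilon(t,x)=\varepsilon u(\varepsilon t,x)$, and to choose the Euler control so that the layer profile is "driven to a negligible size by the end of the Euler phase," possibly by "pushing the layer toward $\Gamma$ where it can exit." Both routes fail. The layer profile solves (for $t\ge T$, once $u^0\equiv0$) a parametrized heat equation on the half-line $z\ge 0$, whose generic $L^2_z$ decay is only $t^{-1/4}$; on a fixed interval this is $\mathcal{O}(1)$, not negligible. And exact null controllability of the layer equation by the tangential-transport mechanism is impossible: the paper demonstrates (on a transport–heat toy model, see Section~\ref{section.friction}) that such a control would have to have exponentially growing Fourier modes, because the pollution injected at interior tangential positions undergoes less smoothing than the control data transported from the inlet. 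Quantitatively, with the natural decay one finds $\big|\sqrt\varepsilon\,v(T/\varepsilon,\cdot,\varphi/\sqrt\varepsilon)\big|_{L^2}=\mathcal{O}(\varepsilon)$ (after the $\varepsilon^{1/4}$ Jacobian gain), which is exactly borderline and not $o(\varepsilon)$ — the argument does not close.

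What the paper actually does, and what is missing from your proposal, is the \emph{well-prepared dissipation} method: (i) rescale to the \emph{long} interval $[0,T/\varepsilon]$, not a fixed one, so that a dissipation phase $[T,T/\varepsilon]$ of length $\mathcal{O}(1/\varepsilon)$ follows the short inviscid stage; (ii) during the inviscid stage $[0,T]$, use an extra forcing term in the boundary layer equation to prepare a finite number of vanishing $z$-moments $\int_{\R_+}z^k\,\bar v(x,z)\,\dz=0$ of the layer profile at time $T$ (Lemma~\ref{lemma.vanishing}), which, via the explicit Fourier analysis of Lemma~\ref{lemma.f}, upgrades the decay to $|\ln t/t|^{1/4+n/2}$; (iii) this enhanced decay simultaneously makes the residual layer $o(\varepsilon)$ at the final time and makes the layer-generated source terms in the remainder equation integrable over the long interval $[0,T/\varepsilon]$, which is essential for the Gr\"onwall estimate to be uniform in $\varepsilon$. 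Without both the long passive phase and the moment preparation, the remainder estimate and the smallness of the layer residue do not hold. A smaller gap: you flag the regularity mismatch before the local step but do not resolve it; the paper needs a regularization lemma (Lemma~\ref{lemma.regularization}) taking a weak $L^2$ state to an $H^3$ state at some later time, since the local controllability result requires $H^3$ data.
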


\begin{remark}
	Even though a unit dynamic viscosity is used in equation~\eqref{eq.main}, 
	Theorem~\ref{thm.weak.null} remains true for any fixed positive 
	viscosity~$\nu$ thanks to a straightforward scaling argument. Some works 
	also consider the case when the friction matrix~$M$ depends on~$\nu$
	(see \cite{MR3229096} or~\cite{MR2744916}). \fm{This} does not
	impact our proofs in the sense that we could still prove that: for any 
	$\nu > 0$, for any $T > 0$, for any smooth $M_\nu$, for any initial data 
	$\uinit$, one can find boundary controls (depending on all these 
	quantities) driving the initial data back to the null equilibrium state 
	at time $T$.
\end{remark}

\begin{remark} \label{remark.strong}
	Theorem~\ref{thm.weak.null} is stated as an existence result. The lack of 
	uniqueness both comes from the fact that multiple controls can drive the 
	initial state to zero and from the fact that it is not known whether weak 
	solutions are unique for the Navier-Stokes equation \fm{in 3D (in 2D, it is 
	known that weak solutions are unique)}. \fm{Always in the 3D case, 
    if} the initial 
	data~$\uinit$ is smooth enough, it would be interesting to know if we can 
	build a strong solution to~\eqref{eq.main} driving $\uinit$ back to zero 
    \fm{(in 2D, global existence of strong solutions is known).}
	\fm{We conjecture that building strong controlled trajectories is 
	possible}. What we do prove \fm{here} is that, if the 
	initial data~$\uinit$ is smooth enough, then our \fm{small-time} global 
	approximate null control strategy drives any weak solution starting from 
	this initial state close to zero.
\end{remark}

Although most of this paper is dedicated to the proof of 
Theorem~\ref{thm.weak.null} concerning the null controllability, we also 
explain in Section~\ref{section.trajectories} how one can adapt \fm{our} method 
to obtain \fm{small-time} global exact controllability towards any weak 
trajectory (and not only the null equilibrium state).

% ==============================================================================
\subsection{A challenging open problem as a motivation}

The \fm{small-time} global exact null controllability problem for the 
Navier-Stokes
equation was first suggested by Jacques-Louis Lions in the late 80's. It is 
mentioned in~\cite{MR1147191} in a setting where the control is a source term
supported within a small subset of the domain (this situation is similar to 
controlling only part of the boundary). In Lions' original question, the 
boundary condition on the uncontrolled part of the boundary is the Dirichlet 
boundary condition. Using our notations and our boundary control setting, the 
system considered is:
\begin{equation} \label{eq.main.dirichlet}
    \left\{
    \begin{aligned}
        \partial_t u + (u \cdot \nabla) u - \Delta u + \nabla p & = 0
        && \quad \textrm{in } \Omega, \\
        \diverg u & = 0
        && \quad \textrm{in } \Omega, \\
        u  & = 0
        && \quad \textrm{on } \BorderNav.
    \end{aligned}
    \right.
\end{equation}

\noindent 
\hypertarget{openproblem}{\textbf{Open Problem (OP)}} 
\emph{
 For any $T > 0$ and $\uinit$ in $L^2(\Omega)$ which is divergence free and 
 vanishes on $\BorderNav$, does there exist a trajectory \fm{of} 
 system~\eqref{eq.main.dirichlet} with $u(0, \cdot) = \uinit$ 
 such that $u(T,\cdot) = 0$?}
\bigskip

This is a very challenging open problem because the Dirichlet boundary 
condition gives rise to boundary layers that have a larger amplitude than 
Navier slip-with-friction boundary layers. However, we expect that the 
\fm{method} we introduce here can inspire later works on the more difficult 
case of the Dirichlet boundary condition, at least for some favorable geometric 
and functional settings.

% ==============================================================================
\subsection{Known results and related previous works} 

\subsubsection{Local results}
\label{par.local.results}

A first approach to study the controllability of Navier-Stokes systems is to 
deal with the quadratic convective term as a perturbation term and obtain 
results using the diffusive term. Of course, this kind of approach is mostly 
efficient for local results, where the quadratic term is indeed small. Most 
local proofs rely on Carleman estimates for the adjoint system. 

For the Dirichlet boundary condition, Imanuvilov proves in~\cite{MR1804497} 
\fm{small-time} local controllability to the trajectories \fm{for 2D and 3D}. 
This result has since been improved in~\cite{MR2103189} by Fern\'andez-Cara, 
Guerrero, Imanuvilov and Puel. Their proof uses Carleman estimates and weakens 
the regularity assumed on the trajectories. \fm{In particular, their proof 
implies null controllability with small initial data in~$L^{2d-2}$.}

For Navier slip-with-friction boundary conditions, two papers were published in 
2006. In~\cite{MR2268275}, the authors prove a local controllability result to 
the trajectories in 2D domains\fm{, assuming the initial data is close to the 
trajectory in $H^1$}. In~\cite{MR2224824}, Guerrero, proves \fm{small-time} 
local controllability to the trajectories for 2D and 3D domains, with 
general non-linear Navier boundary conditions. \fm{His result implies null 
controllability when the initial data is small in $H^3$. It is likely that
this result could be improved to lower this hypothesis to $L^{2d-2}$ as in the
Dirichlet case.}

\subsubsection{Global results}

The second approach goes the other way around: see the viscous term as a 
perturbation of the inviscid dynamic and try to deduce the controllability of
Navier-Stokes from the controllability of Euler. This approach is efficient to
obtain \fm{small-time} results, as inviscid effects prevail in this asymptotic.
However, if one does not control the full boundary, boundary layers appear near 
the uncontrolled boundaries~$\BorderNav$. Thus, most known results try to avoid 
this situation.

In~\cite{MR1470445}, the first author and Fursikov prove a \fm{small-time} 
global 
exact null controllability result when the domain is a manifold without border 
(in this setting, the control is a source term located in a small subset of the 
domain). Likewise, in~\cite{MR1728643}, Fursikov and Imanuvilov prove 
\fm{small-time} global exact null controllability when the control is supported 
on the 
whole boundary (i.e. $\Gamma = \partial \Omega$). In both cases, there is no 
boundary layer.

Another method to avoid the difficulties is to choose more gentle boundary
conditions. In a simple geometry (a 2D rectangular domain), Chapouly proves 
in~\cite{MR2560050} \fm{small-time} global exact null controllability for 
Navier-Stokes under the boundary condition $\nabla \times u = 0$ 
on uncontrolled boundaries. Let $[0,L]\times[0,1]$ be the considered rectangle.
Her control acts on both vertical boundaries at $x_1 = 0$ and $x_1 = L$.
Uncontrolled boundaries are the horizontal ones at $x_2 = 0$ and $x_2 = 1$.
She deduces the controllability of Navier-Stokes from \fm{the controllability} 
of Euler by linearizing around an explicit reference trajectory \fm{$u^0(t,x) :=
(h(t),0)$}, where $h$ is a smooth profile. Hence, the Euler trajectory already 
satisfies all boundary conditions and there is no boundary layer to be expected
\fm{at leading order}.

For Navier slip-with-friction boundary conditions in 2D, the first author 
proves in~\cite{MR1393067} a \fm{small-time} global approximate null 
controllability result. He proves that \fm{exact controllability can be 
achieved} in the interior of the domain. However, this is not the case near the 
boundaries. The approximate controllability is obtained in the space 
$W^{-1,\infty}$, which is not a strong enough space to be able to conclude to 
global exact null controllability using a local result. The residual boundary 
layers are too strong and have not been sufficiently handled during the control 
\fm{design} strategy.

For Dirichlet boundary conditions, Guerrero, Imanuvilov and Puel prove
in~\cite{MR2269867} (resp.~\cite{MR2994698}) for a square
(resp. a cube) where one side (resp. one face) is not controlled, 
a small time result which looks like global approximate null controllability.
Their method consists in adding a new source term (a control supported on the 
whole domain $\Omega$) to absorb the boundary layer. They prove that this 
additional control can be chosen small in $L^p((0,T);H^{-1}(\Omega))$, for $1 
< p < p_0$ (with $p_0 = 8/7$ in 2D and $4/3$ in 3D). However, this norm is too 
weak to take a limit and obtain the result stated in Open 
Problem~(\hyperlink{openproblem}{OP}) (without this fully supported 
additional control). 
Moreover, the $H^{-1}(\Omega)$ estimate seems to indicate that the role of the
inner control is to act on the boundary layer directly where it is 
located\fm{, which is somehow in contrast with the goal of achieving 
controllability with controls supported on only part of the boundary}.

All the examples detailed above tend to indicate that a new method is needed,
which fully takes into account the boundary layer in the control \fm{design} 
strategy.

\subsubsection{The "well-prepared dissipation" method}

In~\cite{MR3227326}, the second author proves \fm{small-time} global exact null 
controllability for the Burgers equation on the line segment $[0,1]$ with a
Dirichlet boundary condition at $x = 1$ (implying the presence of a boundary 
layer near the uncontrolled boundary $x = 1$). The proof relies on a method 
involving a \textit{well-prepared dissipation} of the boundary layer. The sketch
of the method is the following:
\begin{enumerate}
 \item \textbf{Scaling argument.} Let $T > 0$ be the small time given for the 
 control problem. Introduce $\varepsilon \ll 1$ a very small scale.
 Perform the usual small-time to small-viscosity fluid scaling 
 $u^\varepsilon(t,x) := \varepsilon u(\varepsilon t,x)$, yielding a new unknown 
 $u^\varepsilon$, defined on a large time scale $[0,T/\varepsilon]$,
 satisfying a vanishing viscosity equation.
 Split this large time interval in two parts: $[0,T]$ and 
 $[T,T/\varepsilon]$.
 \item \textbf{Inviscid stage.} During $[0,T]$, use (up to
 the first order) the same controls as if the system
 was inviscid. This leads to good interior controllability (far from the 
 boundaries, the system already behaves like its inviscid limit) but creates a 
 boundary layer residue near uncontrolled boundaries. 
 \item \textbf{Dissipation stage.} During the long segment $[T,T/\varepsilon]$, 
 choose null controls and let
 the system dissipate the boundary layer by itself thanks to its smoothing 
 term. 
 As $\varepsilon \rightarrow 0$, the long time scale compensates exactly for 
 the 
 small viscosity. However, as $\varepsilon\rightarrow 0$, the boundary layer 
 gets thinner and dissipates better.
\end{enumerate}
The key point in this method is to separate steps 2 and 3. Trying to control
both the inviscid dynamic and the boundary layer at the end of step 2 is too 
hard. Instead, one chooses the inviscid controls with care during step 2 in 
order to prepare the self-dissipation of the boundary layer during step 3. This
method will be used in this paper and enhanced to prove our result. In order to
apply this method, we will need a very precise description of the boundary
layers involved.

% ==============================================================================
\subsection{Boundary conditions and boundary layers for Navier-Stokes}

Physically, boundary layers are the fluid layers in the immediate vicinity of
the boundaries of a domain, where viscous effects prevail. Mathematically, they
appear when studying vanishing viscosity limits while maintaining strong 
boundary conditions. There is a huge literature about boundary conditions for 
partial differential equations and the associated boundary layers. In this 
paragraph, we give a short overview of some relevant references in our context 
for the Navier-Stokes equation.

% ==============================================================================
\subsubsection{Adherence boundary condition}
\label{par.prandtl}

The strongest and most commonly used boundary condition for Navier-Stokes is 
the full adherence (or no-slip) boundary condition $u = 0$. This condition is 
most often referred to as the Dirichlet condition although it was introduced by 
Stokes in~\cite{stokes1851effect}. Under this condition, fluid particles must 
remain at rest near the boundary. This generates large amplitude boundary 
layers.

In 1904, Prandtl proposed an equation describing the behavior of boundary layers
for this adherence condition in~\cite{prandtl1904uber}. \fm{Heuristically, 
these boundary layers are of amplitude $\mathcal{O}(1)$ and of thickness 
$\mathcal{O}(\sqrt{\nu})$ for a vanishing viscosity $\nu$.}
Although \fm{his equation has} been extensively studied, much is still to be 
learned.

Both physically and numerically, there exists situations where the boundary 
layer separates from the border: 
see~\cite{MR729416},~\cite{guyon2001hydrodynamique},~\cite{MR1051322}, 
or~\cite{MR595970}. Mathematically, it is known that solutions with 
singularities can be built~\cite{MR1476316} and that the linearized system is 
ill-posed in Sobolev spaces~\cite{MR2601044}. \fm{The equation has also been 
proved to be ill-posed in a non-linear context in~\cite{MR2849481}.
Moreover, even around explicit shear flow solutions of the Prandtl equation,
the equation for the remainder between Navier-Stokes and Euler+Prandtl is also
ill-posed (see~\cite{MR2099036} and~\cite{MR3420316}).} 

Most positive known results fall into two families. First, when the initial
data satisfies a monotonicity assumption, introduced by Oleinik 
in~\cite{MR0223134},~\cite{MR1697762}. \fm{See 
also~\cite{MR3327535},~\cite{2016arXiv160706434G},~\cite{MR3385340} 
and~\cite{MR2020656} for different proof techniques in this context.}
Second, when the initial data are analytic, it is both 
proved that the Prandtl equations are well-posed~\cite{MR1617542} and that 
Navier-Stokes converges to an Euler+Prandtl expansion~\cite{MR1617538}. For 
\fm{historical reviews} of known results, see~\cite{MR1778702} 
or~\cite{nickel1973prandtl}.
\fm{We also refer to~\cite{2016arXiv161005372M} for a comprehensive recent 
survey.}

Physically, the main difficulty is the possibility that the boundary layer
separates and penetrates into the interior of the domain (which is prevented by
the Oleinik monotonicity assumption). Mathematically, Prandtl equations lack
regularization in the tangential direction thus exhibiting a loss of derivative 
(which can be circumvented within an analytic setting). 

% ==============================================================================
\subsubsection{Friction boundary conditions}

Historically speaking, the adherence condition is posterior to another condition
stated by Navier in~\cite{navier1823memoire} which involves friction. The fluid
is allowed to slip along the boundary but undergoes friction near the 
impermeable walls. Originally, it was stated as:
\begin{equation} \label{eq.navier.alpha}
 u\cdot n = 0\quad\textrm{and}\quad \tanpart{D(u)n + \alpha u} = 0,
\end{equation}
where $\alpha$ is a scalar positive coefficient. Mathematically, $\alpha$ can
depend (smoothly) on the position and be a matrix without changing much the
nature of the estimates.

This condition has been justified from the boundary condition at the microscopic
scale in~\cite{MR988561} for the Boltzmann equation. See also~\cite{MR3027558} 
or~\cite{MR1980855} for other examples of such derivations. 

Although the adherence condition is more popular in the mathematical community,
the slip-with-friction condition is actually well suited for a large range of
applications. For instance, it is an appropriate model for turbulence near 
rough walls~\cite{launder1972lectures} or in acoustics~\cite{MR605891}. 
It is used by physicists for flat boundaries but also for curved domains 
(see~\cite{einzel1990boundary},~\cite{guo2016slip} or~\cite{panzer1992effects}).
Physically, $\alpha$ is homogeneous to $1/b$ where $b$ is a length, named 
\textit{slip length}. Computing this parameter for different situations,
both theoretically or experimentally is important for nanofluidics and polymer
flows (see~\cite{barrat1999large} or~\cite{bocquet2007flow}).

Mathematically, the convergence of the Navier-Stokes equation under the Navier
slip-with-friction condition to the Euler equation has been studied by many 
authors.
For 2D, this subject is studied in~\cite{MR1660366} and~\cite{MR2217315}.
For 3D, this subject is treated in~\cite{MR2943945} and~\cite{MR2885569}.
To obtain more precise convergence results, it is necessary to introduce 
an asymptotic expansion of the solution $u^\varepsilon$ to the vanishing
viscosity Navier-Stokes equation involving a boundary layer term.
In~\cite{MR2754340}, Iftimie and the third author prove a boundary layer 
expansion. This expansion is easier to handle than the Prandtl model because 
the main equation for the boundary layer correction is both linear and 
well-posed in Sobolev spaces. \fm{Heuristically, these boundary layers are of 
amplitude~$\mathcal{O}(\sqrt{\nu})$ and of thickness~$\mathcal{O}(\sqrt{\nu})$
for a vanishing viscosity $\nu$.}

% ==============================================================================
\subsubsection{Slip boundary conditions}

When the physical friction between the inner fluid and the solid boundary is 
very small, one may want to study an asymptotic model describing a situation
where the fluid perfectly slips along the boundary. Sadly, the perfect slip
situation is not yet fully understood in the mathematical literature.

\paragraph{2D\fm{.}} In the plane, the situation is easier. In 1969, Lions 
introduced
in~\cite{MR0259693} the free boundary condition $\omega = 0$. This condition is
actually a special case of~\eqref{eq.navier.alpha} where $\alpha$ depends on 
the position and $\alpha(x) = 2\kappa(x)$, where $\kappa(x)$ is the curvature of
the boundary at $x \in\partial\Omega$. With this condition, good convergence 
results can be obtained from Navier-Stokes to Euler for vanishing viscosities.

\paragraph{3D\fm{.}} In the space, for flat boundaries, slipping is easily 
modeled 
with the usual impermeability condition $u \cdot n = 0$ supplemented by any of 
the following equivalent conditions:
\begin{align}
  \label{eq.slip.1} \partial_n \tanpart{u} & = 0, \\
  \label{eq.slip.2} \tanpart{D(u) n} & = 0, \\
  \label{eq.slip.3} \tanpart{\rot u} & = 0.
\end{align}
For general non-flat boundaries, these conditions cease to be equivalent. 
This situation gives rise to some confusion in the literature about which 
condition correctly describes a \textit{true slip} condition. 

Formally, condition~\eqref{eq.slip.2} can be seen as the limit when 
$\alpha\rightarrow 0$ of the usual Navier slip-with-scalar-friction 
condition~\eqref{eq.navier.alpha}.
As for condition~\eqref{eq.slip.3} it can be seen as the natural extension in
3D of the 2D Lions free boundary condition. 
Let $x \in \partial\Omega$. We note $T_x$ the tangent space to 
$\partial\Omega$ at $x$. The Weingarten map (or shape operator) $\MW(x)$ at $x$ 
is defined as a linear map from $T_x$ into itself such that 
$\MW(x)\tau := \nabla_\tau n$ for any $\tau$ in $T_x$. \fm{The} image of 
$\MW(x)$ is contained in $T_x$. Indeed, since $|n|^2 = 1$ in a
neighborhood of $\partial\Omega$, $0 = \nabla_\tau (n^2) = 2 n \cdot 
\nabla_\tau n = 2 n \cdot \MW \tau$ for any $\tau$. 
\begin{lemma}[\cite{MR2674070},~\cite{MR2943945}] \label{lemma.weingarten}
If $\Omega$ is smooth, the shape operator $\MW$ is smooth. For any 
$x\in\partial\Omega$ it defines a self-adjoint operator with values in $T_x$.
Moreover, for any divergence free vector field $u$ satisfying $u\cdot n = 0$ 
on $\partial\Omega$, we have:
 \begin{equation}
   \tanpart{D(u)n + \MW u} = \frac{1}{2} (\rot u) \times n. \label{eq.m.to.rot}
 \end{equation}
\end{lemma}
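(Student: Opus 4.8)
The plan is to obtain \eqref{eq.m.to.rot} from a purely pointwise algebraic identity relating $D(u)n$, $(\rot u)\times n$ and $(\nabla u)^{\mathsf{T}}n$, and then to show that the tangential part of $(\nabla u)^{\mathsf{T}}n$ equals $-\MW u$, using only the definition of $\MW$, the constraint $u\cdot n=0$ on $\partial\Omega$, and self-adjointness of $\MW$. The divergence-free hypothesis is in fact not needed for this identity; I would keep it only because it matches the setting in which the lemma is applied.

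For the structural claims, I would extend $n$ to a tubular neighborhood of $\partial\Omega$ by $n=\nabla d$, where $d$ is the signed distance to $\partial\Omega$; since $\Omega$ is smooth, $d$ and hence $n$ are smooth near $\partial\Omega$, so $\MW=\nabla n|_{\partial\Omega}$ is smooth. The matrix $\nabla n=\nabla^2 d$ is symmetric, and $\MW(x)$ maps $T_x$ into $T_x$ (the inclusion of its image in $T_x$ being already recorded above), so $\MW(x)$ is a self-adjoint endomorphism of $T_x$.

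Next I would establish the pointwise identity. Writing $A:=\nabla u$ with $A_{ij}=\partial_j u_i$, one has $2D(u)=A+A^{\mathsf{T}}$, while a short Levi-Civita contraction gives the standard identity $(\rot u)\times n=(A-A^{\mathsf{T}})n$ (valid entrywise, at each point of $\partial\Omega$ with $n$ the outward normal, and in both $d=2$ and $d=3$). Subtracting,
\begin{equation} \label{eq.plan.pointwise}
 D(u)n=\tfrac12\,(\rot u)\times n+(\nabla u)^{\mathsf{T}}n .
\end{equation}

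Finally I would take tangential parts in \eqref{eq.plan.pointwise}. The vector $(\rot u)\times n$ is orthogonal to $n$, hence tangential. For the last term, testing against an arbitrary $\tau\in T_x$ gives $\big((\nabla u)^{\mathsf{T}}n\big)\cdot\tau=\sum_{i,j}\tau_i\,\partial_i u_j\,n_j=\nabla_\tau(u\cdot n)-u\cdot\nabla_\tau n$; on $\partial\Omega$ the function $u\cdot n$ is identically zero so $\nabla_\tau(u\cdot n)=0$, and $\nabla_\tau n=\MW\tau$ by definition, whence $\big((\nabla u)^{\mathsf{T}}n\big)\cdot\tau=-u\cdot\MW\tau=-(\MW u)\cdot\tau$ by self-adjointness. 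Since this holds for all $\tau\in T_x$ and $\MW u\in T_x$, we get $\tanpart{(\nabla u)^{\mathsf{T}}n}=-\MW u$, and adding $\MW u$ to the tangential part of \eqref{eq.plan.pointwise} yields exactly \eqref{eq.m.to.rot}. I expect no genuine obstacle here: the only care needed is in checking the curl identity $(\rot u)\times n=(A-A^{\mathsf{T}})n$ and in noting that only tangential derivatives of $n$ enter the computation of $\tanpart{(\nabla u)^{\mathsf{T}}n}$, so that the intrinsic relation $\nabla_\tau n=\MW\tau$ suffices and no extension of $n$ off $\partial\Omega$ is actually required.
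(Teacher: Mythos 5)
The paper does not prove Lemma~\ref{lemma.weingarten}: it is quoted from~\cite{MR2674070} and~\cite{MR2943945} without an in-paper argument, so there is no proof in the paper to compare against. Your proof is correct and self-contained. The pointwise decomposition $D(u)n = \tfrac{1}{2}(\rot u)\times n + (\nabla u)^{\mathsf T} n$ follows from $2D(u)=\nabla u + (\nabla u)^{\mathsf T}$ together with the curl identity $(\rot u)\times n = (\nabla u - (\nabla u)^{\mathsf T})n$, both easy to verify by index computations in 2D and 3D; and the evaluation of $\tanpart{(\nabla u)^{\mathsf T} n}$ by differentiating $u\cdot n = 0$ along tangent directions, combined with the self-adjointness of $\MW$ on $T_x$ and the fact that $\MW u\in T_x$, correctly yields $\tanpart{(\nabla u)^{\mathsf T} n}=-\MW u$. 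The structural claims (smoothness and self-adjointness of $\MW$) are also handled correctly via the signed distance function and the symmetry of the Hessian. Your remark that the divergence-free hypothesis is not used is accurate — only $u\cdot n=0$ on $\partial\Omega$ enters — and the argument is essentially the standard one one finds in the cited references.
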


Even though it is a little unusual, it seems that condition~\eqref{eq.slip.3} 
actually better describes the situation of a fluid slipping along the boundary.
The convergence of the Navier-Stokes equation to the Euler equation under this
condition has been extensively studied. In particular, let us mention the 
works by Beirao da Veiga, Crispo et al. 
(see~\cite{MR2732007},~\cite{MR2674070},~\cite{MR2784899},~\cite{MR2891190},
\cite{MR2924729},~\cite{MR2754821} and~\cite{MR2761082}),
by Berselli et al. (see~\cite{MR2761078},~\cite{MR2989457})
and by Xiao, Xin et al.
(see~\cite{MR2992041},~\cite{MR2744916},~\cite{MR2319054},~\cite{MR2805403} 
and~\cite{MR3197467}).

The difficulty comes from the fact that the Euler equation \fm{(which models 
the behavior of a perfect fluid, not subject to friction)} is only associated 
with the $u\cdot n = 0$ boundary condition \fm{for an impermeable wall}. Any 
other supplementary condition 
will be violated for some initial data. Indeed, as shown in~\cite{MR2924729}, 
even the persistence property is false for condition~\eqref{eq.slip.3} for the 
Euler equation: choosing an initial data such that~\eqref{eq.slip.3} is 
satisfied does not guarantee that it will be satisfied at time $t > 0$. 

% ==============================================================================
\subsection{Plan of the paper}

The paper is organized as follows:
\begin{itemize}
	\item
	In Section~\ref{section.slip}, we consider the special case of the slip 
	boundary condition~\eqref{eq.slip.3}. This case is easier to handle because 
	no boundary layer appears. We \fm{prove} Theorem~\ref{thm.weak.null} in this
	simpler setting in order to explain some elements of our method.
	\item
	In Section~\ref{section.friction}, we introduce the boundary layer 
	expansion that we will be using to handle the general case and we prove 
	that we can apply the well-prepared dissipation method to ensure that the 
	residual boundary layer is small at the final time.
	\item
	In Section~\ref{section.remainder}, we introduce technical terms in the 
	asymptotic expansion of the solution and we use them to carry out energy 
	estimates on the remainder. \fm{We prove
	Theorem~\ref{thm.weak.null} in the general case}.
	\item
	In Section~\ref{section.trajectories} we explain how the well-prepared 
	dissipation method detailed in the case of null controllability can be 
	adapted to prove \fm{small-time} global exact controllability to the 
	trajectories.
\end{itemize}

% ==============================================================================
\section{A special case with no boundary layer: the slip condition}
\label{section.slip}
% ==============================================================================

In this section, we consider the special case where the friction coefficient 
$M$ is the shape operator $\MW$. On the uncontrolled boundary, thanks to 
Lemma~\ref{lemma.weingarten}, the flow satisfies:
\begin{equation} \label{eq.slip}
	u \cdot n = 0 
	\quad \text{and} \quad 
	\tanpart{\rot u} = 0.
\end{equation}
In this setting, we can build an Euler trajectory satisfying this 
overdetermined boundary condition. The Euler trajectory by itself is \fm{thus} 
an excellent approximation of the Navier-Stokes trajectory, up to the boundary.
This allows us to present some elements of our method in a simple setting 
before moving on to the general case which involves boundary layers. 

As in~\cite{MR1393067}, our strategy is to deduce the controllability of the 
Navier-Stokes equation in small time from the controllability of the Euler 
equation. In order to use this strategy, we are willing to trade small time 
against small viscosity using the usual fluid dynamics scaling. \fm{Even}
in this easier context, Theorem~\ref{thm.weak.null} is new for multiply 
connected 2D domains and for all 3D domains since~\cite{MR1393067} only 
concerns simply connected 2D domains. \fm{This condition was also studied 
in~\cite{MR2560050} in the particular setting of a rectangular domain.}

% ==============================================================================
\subsection{Domain extension and weak controlled trajectories}
\label{par.extension}

We start by introducing a smooth extension $\Omext$ of our initial 
domain~$\Omega$. We choose this extended domain in such a way that $\Gamma 
\subset \Omext$ and $\BorderNav \subset \BorderExt$ (see 
Figure~\ref{fig.extension} for a simple case). This extension procedure can be
justified by standard arguments. Indeed, we already assumed that $\Omega$ is 
a smooth domain and, up to reducing the size of $\Gamma$, we can assume that its
intersection with each connected component of $\partial\Omega$ is \fm{smooth}. 
From now on, $n$ will denote the outward pointing 
normal to the extended domain $\Omext$ (which coincides with the outward 
pointing normal to $\Omega$ on the uncontrolled boundary $\BorderNav$). We will 
also need to introduce a smooth function $\varphi : \R^d \rightarrow \R$ such 
that $\varphi = 0$ on $\BorderExt$, $\varphi > 0$ \fm{in} $\Omext$ and $\varphi 
< 0$ outside of~$\bar{\Omext}$. Moreover, we assume that $\fm{|\varphi(x)|} = 
\mathrm{dist}(x, \BorderExt)$ in a small neighborhood of $\BorderExt$. Hence, 
the normal~$n$ can be computed as $- \nabla \varphi$ close to the 
boundary and extended smoothly within the full domain~$\Omext$. In the 
sequel, we will refer to~$\Omega$ as the \emph{physical domain} where we try
to build a controlled trajectory \fm{of}~\eqref{eq.main}. Things happening 
within $\Omext \setminus \Omega$ are technicalities corresponding to the choice 
of the controls and we advise the reader to focus on true physical phenomenons
happening inside $\Omega$.

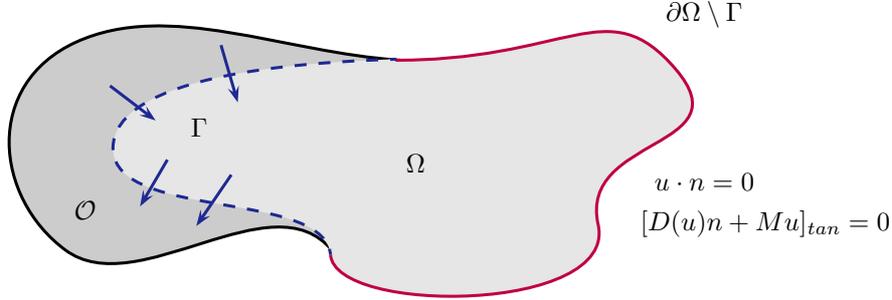
\begin{figure}[!ht] \label{fig.extension}
 \centering
 \begin{pspicture}(1.0,-2.2100782)(13.5,2.230078)
  \definecolor{gray}{rgb}{0.9,0.9,0.9}
  \definecolor{blue}{rgb}{0.11764705882352941,0.1607843137254902,0.5725490196078431}
  \definecolor{color311b}{rgb}{0.8,0.8,0.8}
  \psbezier[linewidth=0.04,fillstyle=solid,fillcolor=color311b](6.64,1.1900781)(5.1,1.290078)(3.6,1.9900781)(2.38,1.4300781)(1.16,0.8700781)(1.296471,-0.6262042)(2.24,-1.32885)(3.183529,-2.0314958)(4.96,-0.3699219)(5.74,-1.32885)
  \psbezier[linewidth=0.04,linecolor=purple,fillstyle=solid,fillcolor=gray](5.7251563,-1.3900781)(5.7851562,-2.190078)(9.5051565,-2.170078)(9.245156,-0.9900781)(8.985155,0.1899219)(11.167927,0.005063417)(10.265157,1.0699219)(9.362386,2.1347802)(8.869687,1.1633594)(6.4851565,1.1899219)
  \psbezier[linewidth=0.04,linecolor=blue,fillstyle=solid,fillcolor=gray,linestyle=dashed,dash=0.16cm 0.16cm](5.7251563,-1.3900781)(5.6451564,-0.5100781)(2.811985,-0.9486635)(2.8651562,0.0499218)(2.9183276,1.0485071)(5.6051564,1.1699218)(6.5896873,1.2033592)
  \psline[linewidth=0.04cm,linecolor=blue,arrowsize=0.073cm 2.0,arrowlength=1.4,arrowinset=0.4]{->}(4.2851562,1.3899217)(4.5051565,0.6299218)
  \psline[linewidth=0.04cm,linecolor=blue,arrowsize=0.073cm 2.0,arrowlength=1.4,arrowinset=0.4]{<-}(3.22,-0.7499219)(3.58,-0.1299219)
  \psline[linewidth=0.04cm,linecolor=blue,arrowsize=0.073cm 2.0,arrowlength=1.4,arrowinset=0.4]{<-}(3.4251564,0.3899218)(2.8251562,0.84992176)
  \psline[linewidth=0.04cm,linecolor=blue,arrowsize=0.073cm 2.0,arrowlength=1.4,arrowinset=0.4]{->}(4.42,-0.3299219)(3.96,-1.0099219)
  \usefont{T1}{ptm}{m}{n}
  \rput(6.8442187,-0.1733594){$\Omega$}
  \usefont{T1}{ptm}{m}{n}
  \rput(10.643594,1.8){$\partial \Omega \setminus \Gamma$}
  \usefont{T1}{ptm}{m}{n}
  \rput(10.644218,-0.4233594){$u\cdot n = 0$}
  \usefont{T1}{ptm}{m}{n}
  \rput(11.444219,-0.9733594){$[D(u)n+M u]_{tan} = 0$}
  \usefont{T1}{ptm}{m}{n}
  \rput(4.0,0.3){$\Gamma$}
  \usefont{T1}{ptm}{m}{n}
  \rput(2.5,-0.8){$\mathcal{O}$}
 \end{pspicture}   
 \caption{Extension of the physical domain \fm{$\Omega \subset \Omext$}.}
\end{figure}

\begin{definition} \label{def.weak.controlled}
    Let $T > 0$ and $\uinit \in \Hspace$. Let  
    $u \in \mathcal{C}_w^0([0,T];\Hspace) \cap L^2((0,T); H^1(\Omega))$.
    We will say that $u$ is a \emph{weak controlled trajectory} \fm{of} 
    system~\eqref{eq.main} with initial condition $\uinit$ when $u$ is the 
    restriction to the physical domain~$\Omega$ of a \emph{weak Leray solution} 
    in the space $\mathcal{C}_w^0([0,T];L^2(\Omext)) \cap L^2((0,T); 
    H^1(\Omext))$ on the extended domain~$\Omext$, which we still denote
    by $u$\fm{,} to:
    \begin{equation} \label{eq.main.ext}
        \left\{
        \begin{aligned}
        \partial_t u + (u \cdot \nabla) u 
        - \Delta u + \nabla p & = \force
        && \quad \textrm{in } \Omext, \\
        \diverg u & = \sigma
        && \quad \textrm{in } \Omext, \\
        u \cdot n & = 0
        && \quad \textrm{on } \BorderExt, \\
        N(u) & = 0
        && \quad \textrm{on } \BorderExt, \\
        u(0,\cdot) & = \uinit
        && \quad \textrm{\fm{in} } \Omext,
        \end{aligned}
        \right.
    \end{equation}
    where $\force \in \controlspaceforce$ is a forcing term supported 
    \fm{in $\bar{\Omext}\setminus\bar{\Omega}$}, $\sigma$ is a smooth non 
    homogeneous divergence condition also supported \fm{in 
    $\bar{\Omext}\setminus\bar{\Omega}$} and $\uinit$ has been extended 
    to~$\Omext$ such that the extension is tangent to~$\BorderExt$ and 
    satisfies the compatibility condition $\diverg \uinit = \sigma(0,\cdot)$.
\end{definition}

Allowing a non vanishing divergence outside of the physical domain is necessary
both for the control design process and because we did not restrict ourselves
to controlling initial data satisfying $\uinit \cdot n = 0$ on~$\Gamma$. 
Defining weak Leray solutions to~\eqref{eq.main.ext} is a difficult question
when one tries to obtain optimal functional spaces for the non homogeneous 
source terms. For details on this subject, we refer the reader 
to~\cite{MR2258419},~\cite{MR2866119} or~\cite{MR2679654}. In our case, since
the divergence source term is smooth, an efficient method is to start by solving 
a (stationary or evolution) Stokes problem in order to lift the non homogeneous 
divergence condition. We \fm{define $u_\sigma$ as} the solution to:
\begin{equation} \label{eq.stokes.div}
    \left\{
    \begin{aligned}
    \partial_t u_\sigma - \Delta u_\sigma + \nabla p_\sigma & = 0
    && \quad \textrm{in } \Omext, \\
    \diverg u_\sigma & = \sigma
    && \quad \textrm{in } \Omext, \\
    u_\sigma \cdot n & = 0
    && \quad \textrm{on } \BorderExt, \\
    N(u_\sigma) & = 0
    && \quad \textrm{on } \BorderExt, \\
    u_\sigma(0,\cdot) & = \uinit
    && \quad \textrm{\fm{in} } \Omext.
    \end{aligned}
    \right.
\end{equation}
Smoothness (in time and space) of $\sigma$ immediately gives smoothness on 
$u_\sigma$. These are standard maximal regularity estimates for the 
Stokes problem in the case of the Dirichlet boundary condition. For Navier
boundary conditions (sometimes referred to as Robin boundary conditions for the 
Stokes problem), we refer to~\cite{MR2325694},~\cite{MR2387243} 
or~\cite{MR2285430}. Decomposing $u = u_\sigma + u_h$,
we obtain the following system for $u_h$:
\begin{equation} \label{eq.main.ext.h}
    \left\{
    \begin{aligned}
    \partial_t u_h + (u_\sigma \cdot \nabla) u_h
    + (u_h \cdot \nabla) u_\sigma
    + (u_h \cdot \nabla) u_h
    - \Delta u_h + \nabla p_h & = \force
    - (u_\sigma \cdot \nabla) 
    u_\sigma
    && \quad \textrm{in } \Omext, \\
    \diverg u_h & = 0
    && \quad \textrm{in } \Omext, \\
    u_h \cdot n & = 0
    && \quad \textrm{on } \BorderExt, \\
    N(u_h) & = 0
    && \quad \textrm{on } \BorderExt, \\
    u_h(0,\cdot) & = 0
    && \quad \textrm{\fm{in} } \Omext.
    \end{aligned}
    \right.
\end{equation}
Defining weak Leray solutions \fm{to}~\eqref{eq.main.ext.h} is a standard 
procedure. 
They are defined as measurable functions satisfying the variational formulation 
of~\eqref{eq.main.ext.h} and some appropriate energy inequality. For in-depth
insights on this topic, we refer the reader to the classical references by 
Temam~\cite{MR1846644} or Galdi~\cite{MR1798753}. In our case, let $\ldiv$
denote the closure in $L^2\fm{(\Omext)}$ of the space of smooth divergence free 
vector
fields tangent to~$\BorderExt$. We will say that $u_h \in 
\mathcal{C}_w^0([0,T];\ldiv) \cap L^2((0,T); H^1(\Omext))$ is a weak Leray 
solution to~\eqref{eq.main.ext.h} if it satisfies the variational formulation:
\begin{equation}
    \begin{split}
        \fm{-} \iint_\Omext u_h \partial_t \phi
        & + \iint_\Omext \left( (u_\sigma \cdot \nabla) u_h
        + (u_h \cdot \nabla) u_\sigma
        + (u_h \cdot \nabla) u_h \right) \phi \\
        & + 2 \iint_\Omext D(u_h) : D(\phi)
        + 2 \iint_{\BorderExt} \tanpart{M u_h} \phi 
        = \iint_{\Omext} \left(\force
        - (u_\sigma \cdot \nabla) u_\sigma \right) \phi,
    \end{split}
\end{equation} 
for any $\phi \in \mathcal{C}^\infty_c([0,T), \bar{\Omext})$ which is 
divergence free and tangent to $\BorderExt$. We moreover require that they 
satisfy the so-called strong energy inequality for almost every $\tau < t$:
\begin{equation} \label{eq.SEI}
    \begin{split}
        \left| u_h(t) \right|_{L^2}^2
        + 4 \iint_{(\tau, t) \times \Omext} \left|D(u_h)\right|^2
        & \leq \left| u_h(\tau) \right|_{L^2}^2
        \fm{-} 4 \iint_{(\tau, t) \times \BorderExt} \tanpart{M u_h} 
        u_h  \\
        & + \iint_{(\tau, t) \times \Omext} \sigma u_h^2
        + 2(u_h \cdot \nabla) u_\sigma u_h 
        + 2\left(\force - (u_\sigma \cdot \nabla) u_\sigma 
        \right) u_h.
    \end{split}
\end{equation}
\fm{In~\eqref{eq.SEI}, the boundary term is well defined. Indeed, from the 
Galerkin method, we can obtain strong convergence of Galerkin approximations 
$u_h^n$ towards $u_h$ in $L^2((0,T);L^2(\BorderExt))$ 
(see~\cite[page~155]{MR2754340}).}

Although uniqueness of weak Leray solutions is still an open question, it is 
easy to adapt the classical Leray-Hopf theory proving global existence of weak 
solutions to the case of Navier boundary conditions (see~\cite{MR1660366} for 
2D or~\cite{MR2214949} for 3D). Once forcing terms $\force$ and $\sigma$ 
are fixed, there exists thus at least one weak Leray solution $u$ 
to~\eqref{eq.main.ext}.

In the sequel, we will mostly work within the extended domain. Our goal will 
be to explain how we choose the external forcing terms $\force$ and $\sigma$ in 
order to guarantee that the associated controlled trajectory vanishes within 
the physical domain at the final time.

% ==============================================================================
\subsection{Time scaling and small viscosity asymptotic expansion}

The global controllability time $T$ is small but fixed. Let us introduce a 
positive parameter $\varepsilon \ll 1$. We will be even more ambitious and try
to control the system during the shorter time interval $[0, \varepsilon T]$.
We perform the scaling: $\ue(t,x) := \varepsilon u(\varepsilon t, x)$ and 
$p^\varepsilon(t,x) := \varepsilon^2 p(\varepsilon t, x)$. 
\fm{Similarly, we set 
    $\force^\varepsilon(t,x):=\varepsilon^2\force(\varepsilon t,x)$
    and $\sigma^\varepsilon(t,x):=\varepsilon \sigma(\varepsilon t, x)$. }
Now, $(\ue,p^\varepsilon)$ is a solution to the following system for $t\in (0, 
T)$:
\begin{equation} \label{eq.nse.rot}
	\left\{
	\begin{aligned}
		\partial_t \ue + \left( \ue \cdot \nabla \right) \ue
		- \varepsilon \Delta \ue + \nabla p^\varepsilon 
		& = \force^\varepsilon
		&&\quad \textrm{\fm{in} } (0, T) \times \Omext, \\
		\diverg \ue & = \sigma^\varepsilon
		&&\quad \textrm{\fm{in} } (0, T) \times \Omext, \\
		\ue\cdot n & = 0 
		&&\quad \textrm{on } (0, T) \times \BorderExt, \\
		\tanpart{\rot \ue} & = 0 
		&&\quad \textrm{on } (0, T) \times \BorderExt, \\
		\ue \rvert_{t = 0} & = \varepsilon \uinit
		&&\quad \textrm{\fm{in} } \Omext.
	\end{aligned}
	\right.
\end{equation}
Due to the scaling chosen, we plan to prove that we can obtain 
$\left|\ue(T,\cdot)\right|_{L^2(\Omext)}=o(\varepsilon)$ in order to conclude
with a local result. Since $\varepsilon$ is small, we expect
$\ue$ to converge to the solution of the Euler equation. Hence, we introduce 
the following asymptotic expansion for:
\begin{align} 
 \label{eq.expansion.rot}
 \ue & = u^0 + \varepsilon u^1 + \varepsilon \Rem, \\
 \label{eq.expansion.p.rot}
 p^\varepsilon & = \fm{p^0} + \varepsilon \fm{p^1} + \varepsilon 
 \fm{\pi^\varepsilon}, \\
 \label{eq.expansion.zeta.rot}
 \force^\varepsilon & = \force^0 + \varepsilon \force^1, \\
 \label{eq.expansion.sigma.rot}
 \sigma^\varepsilon & = \sigma^0.
\end{align}
Let us provide some insight behind 
expansion~\eqref{eq.expansion.rot}-\eqref{eq.expansion.sigma.rot}. The 
first term~$(u^0,p^0\fm{,\force^0,\sigma^0})$ is \fm{the solution to a 
controlled} 
Euler 
equation. It models a smooth reference trajectory around which we are 
linearizing the Navier-Stokes 
equation. This trajectory will be chosen in such a way that it flushes the 
initial data out of the domain in time $T$. The second 
term~$(u^1,p^1,\force^1)$ takes 
into account the initial data~$\uinit$, which will be flushed out of the physical 
domain by the flow~$u^0$. Eventually,~$\fm{(\Rem,\pi^\varepsilon)}$ contains 
higher order residues.
We need to prove $\left|\Rem(T,\cdot)\right|_{L^2(\Omext)}=o(1)$ in order to 
be able to conclude the proof of Theorem~\ref{thm.weak.null}.

% ==============================================================================
\subsection{A return method trajectory for the Euler equation}
\label{par.euler}

At order~$\mathcal{O}(1)$, the first part~$(u^0, p^0)$ of our expansion is a 
solution to the Euler equation. Hence, the pair~$(u^0, p^0)$ is a 
\textit{return-method-like} trajectory of the Euler equation on~$(0,T)$: 
\begin{equation} \label{eq.euler.ext}
	\left\{
		\begin{aligned}
			\partial_t u^0 + \left( u^0 \cdot \nabla \right) u^0 + \nabla p^0
			& = \force^0
			&&\quad \textrm{in } (0, T) \times \Omext, \\
			\diverg u^0 & = \sigma^0
			&&\quad \textrm{in } (0, T) \times \Omext, \\
			u^0 \cdot n & = 0 
			&&\quad \textrm{on } (0, T) \times \partial\Omext, \\
			u^0(0, \cdot) & = 0 
			&&\quad \textrm{in } \Omext, \\
			u^0(T, \cdot) & = 0 
			&&\quad \textrm{in } \Omext,
		\end{aligned}
	\right.
\end{equation}
where $\force^0$ and $\sigma^0$ are smooth forcing terms supported \fm{in 
$\bar{\Omext}\setminus\bar{\Omega}$}. We want to use this reference trajectory 
to flush any particle outside of the physical domain within the fixed time 
interval $[0,T]$. Let us introduce the flow~$\Phi^0$ associated with~$u^0$:
\begin{equation} \label{eq.flow}
 \left\{
  \begin{aligned}
   \Phi^0(t,t,x) & = x, \\
   \partial_s \Phi^0(t,s,x) & = u^0(s, \Phi^0(t, s, x)).
  \end{aligned}
 \right.
\end{equation}
Hence, we look for trajectories satisfying:
\begin{equation} \label{eq.flow.out}
 \fm{\forall x \in \bar{\Omext}, 
 \exists t_x \in (0,T),
 \quad \Phi^0\left(0,t_x,x\right) \notin \bar{\Omega}}.
\end{equation}
We do not require that the time $t_x$ be the same for all $x \in \Omext$. 
Indeed, it might not be possible to flush all of the points outside of the 
physical domain at the same time. \fm{Property~\eqref{eq.flow.out} is obvious
for points $x$ already located in $\bar{\Omext}\setminus\bar{\Omega}$. For 
points lying within the physical domain, we use:}

\begin{lemma} \label{lemma.euler}
	There exists a solution~$(u^0, p^0, \force^0, \sigma^0) \in 
	\mathcal{C}^\infty([0,T]\times\bar{\Omext},\R^d \times \R \times \R^d 
	\times \R)$ 
	to system~\eqref{eq.euler.ext} such that the flow~$\Phi^0$ defined 
	in~\eqref{eq.flow} satisfies~\eqref{eq.flow.out}. Moreover, $u^0$ can be
	chosen such that:
	\begin{equation} \label{eq.euler.irrot}
	  \nabla \times u^0 = 0 \quad \textrm{in} \quad [0,T] \times 
      \fm{\bar{\Omext}}.
	\end{equation}
    \fm{Moreover, $(u^0, p^0, \force^0, \sigma^0)$ are compactly supported in 
    $(0,T)$. In the sequel, when we need it, we will implicitly extend them by 
    zero after $T$.}
\end{lemma}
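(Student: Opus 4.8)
The plan is to build $(u^0,p^0,\xi^0,\sigma^0)$ by first constructing a suitable time-dependent \emph{potential} flow on the extended domain $\Omext$ and then recovering the forcing terms by plugging this flow into the Euler equation. Concretely, I would look for $u^0$ of the form $u^0(t,x) = \nabla \theta(t,x)$, where for each $t$ the potential $\theta(t,\cdot)$ is harmonic in $\Omext$; this automatically gives $\nabla\times u^0 = 0$, which is exactly \eqref{eq.euler.irrot}. Since $\Omext$ need not be simply connected, one must be a little careful: $u^0$ should be a gradient (not merely curl-free), so I would work directly with globally-defined potentials $\theta(t,\cdot)$, harmonic in $\Omext$, whose normal derivative $\partial_n\theta = u^0\cdot n$ on $\BorderExt$ is prescribed to be supported in $\BorderExt \setminus \partial\Omega$ (i.e.\ inside the ``added'' piece of boundary that lies in $\Omext\setminus\bar\Omega$) and has zero mean on each connected component of $\BorderExt$ so that the Neumann problem is solvable. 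The profile $\sigma^0$ can then be taken to be $\diverg u^0 = \Delta\theta$; but since we want $\sigma^0$ supported in $\bar\Omext\setminus\bar\Omega$, it is cleaner to keep $\theta$ harmonic so $\sigma^0 = 0$ inside $\Omega$ — in fact one can arrange $\sigma^0\equiv 0$ and only allow a nonzero divergence as a technical device near $\Gamma$ if needed for the boundary-value accounting; the essential content is that $u^0\cdot n$ may be nonzero on $\Gamma$.

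The heart of the matter is the geometric \textbf{flushing} property \eqref{eq.flow.out}: the flow $\Phi^0$ of $u^0$ must push every point of $\bar\Omega$ out of $\bar\Omega$ before time $T$. This is where the hypothesis that $\Gamma$ meets every connected component of $\partial\Omega$ is used. I would adapt the classical construction from Coron's return method for Euler controllability (as in the works cited for Lemma~\ref{lemma.euler}): choose a fixed reference harmonic field $v(x) = \nabla\theta_0(x)$ on $\Omext$ whose flux exits through $\Gamma$ on all of $\partial\Omega$, show that its flow transports $\bar\Omega$ out of $\bar\Omega$ in some (possibly long) time, and then speed it up and localize it in time by setting $u^0(t,x) = \alpha(t)\, v(x)$ for a smooth scalar profile $\alpha$ compactly supported in $(0,T)$ with $\int_0^T \alpha$ large enough. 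Because multiplying by a time function only reparametrizes trajectories, the flushing time can be made smaller than $T$ for the full time interval while keeping $(u^0,p^0,\xi^0,\sigma^0)$ compactly supported in $(0,T)$; the claimed extension by zero after $T$ is then immediate. The condition $u^0(0,\cdot) = u^0(T,\cdot) = 0$ is automatic from $\alpha$ being compactly supported in $(0,T)$.

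Once $u^0$ is fixed, I recover the forcing terms algebraically: set $\sigma^0 := \diverg u^0$ (zero, or at worst supported off $\bar\Omega$ by the choice above), and set
\begin{equation*}
 \xi^0 := \partial_t u^0 + (u^0\cdot\nabla)u^0 + \nabla p^0,
\end{equation*}
where $p^0$ is any smooth function; the only real requirement is that $\xi^0$ be supported in $\bar\Omext\setminus\bar\Omega$. Since $u^0$ is a (time-scaled) gradient field, $(u^0\cdot\nabla)u^0 = \tfrac12\nabla|u^0|^2$ and $\partial_t u^0 = \alpha'(t)\nabla\theta_0 = \nabla(\alpha'(t)\theta_0)$ are both gradients \emph{inside} $\Omega$, so inside $\Omega$ one can choose $p^0$ to make the bracket vanish identically, i.e.\ $\xi^0 = 0$ in $\Omega$; on $\Omext\setminus\bar\Omega$ one simply lets $\xi^0$ be whatever it is, which is permitted. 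Smoothness of all four functions on $[0,T]\times\bar\Omext$ follows from elliptic regularity for the Neumann problem on the smooth domain $\Omext$ together with smoothness of $\alpha$. The main obstacle is genuinely the flushing property \eqref{eq.flow.out} in a topologically nontrivial, possibly multiply connected $\Omext$: one must verify that a single harmonic field with the right flux pattern through $\Gamma$ does expel all of $\bar\Omega$, which requires the careful potential-theoretic construction (and the hypothesis on $\Gamma$) rather than a soft argument; everything else is bookkeeping.
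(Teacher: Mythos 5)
Your overall architecture is right and matches the paper's intent: the paper does not prove this lemma itself but defers to the Euler controllability references (Coron~\cite{MR1233425,MR1380673} in 2D, Glass~\cite{MR1485616,MR1745685} in 3D), and those references do build $u^0$ out of potential flows $u^0 = \nabla\theta^0$, recover $\xi^0$ from the Euler equation with support in $\bar{\Omext}\setminus\bar{\Omega}$, and obtain compact time-support by a smooth cutoff. Two points in your sketch, however, are off.

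First, the divergence/boundary-condition bookkeeping. System~\eqref{eq.euler.ext} imposes $u^0\cdot n = 0$ on all of $\partial\Omext$, not merely on $\partial\Omega\setminus\Gamma$. Pairing that with a harmonic $\theta$ as you propose would force $\theta$ constant (Neumann problem with vanishing data), or at best a stationary harmonic one-form in a multiply connected $\Omext$, which cannot flush anything. The actual mechanism is the opposite of what you wrote: one keeps $\partial_n\theta^0 = 0$ on all of $\BorderExt$ and introduces a genuinely nonzero divergence source $\sigma^0 = \Delta\theta^0$, supported in $\bar{\Omext}\setminus\bar{\Omega}$ and with zero mean, which is what drives the fluid through $\Omega$ while respecting the tangency condition; $\sigma^0$ is not an optional technical device.

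Second, a single autonomous field $v=\nabla\theta_0$ rescaled by $\alpha(t)$ suffices only in 2D simply connected domains (the setting of~\cite{MR1233425}). Your reparametrization observation is correct and is precisely why the cutoff works there. But in multiply connected 2D domains and in 3D, one cannot in general choose a single potential flow tangent to $\BorderExt$ and without stagnation points in $\bar\Omega$ (Poincar\'e--Hopf type obstructions), and stagnation points are never flushed. The remedy used in~\cite{MR1380673} and~\cite{MR1745685} is to construct a finite family of potential flows $\nabla\theta_1,\ldots,\nabla\theta_N$ and to chain them in time with disjointly supported cutoffs, so that every $x\in\bar\Omega$ exits under at least one of them; this multi-phase construction is also where the hypothesis that $\Gamma$ meets every connected component of $\partial\Omega$ is actually exploited. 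Your algebraic recovery of $\xi^0$, the observation that gradient terms absorb into $p^0$ so that $\xi^0$ can be taken to vanish in $\Omega$, and the smoothness via elliptic regularity are all fine once these two corrections are made.
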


This lemma is the key argument of multiple papers concerning the 
\fm{small-time} 
global exact controllability of Euler equations. We refer to the following
references for detailed statements and construction of these reference 
trajectories. First, the first author used it in~\cite{MR1233425} for 2D
simply connected domains, then in~\cite{MR1380673} for general 2D domains 
\fm{when}
$\Gamma$ intersects all connected components of $\partial\Omega$.
Glass adapted the argument for 3D domains (when $\Gamma$ intersects all
connected components of the boundary), for simply connected domains 
in~\cite{MR1485616} then for general domains in~\cite{MR1745685}.
He also used similar arguments to study the obstructions to approximate 
controllability in 2D when $\Gamma$ does not intersect all connected 
components of the boundary for general 2D domains in~\cite{MR1860818}.
Here, we use the assumption that our control domain~$\Gamma$ intersects all
connected parts of the boundary $\partial\Omega$. The fact that 
condition~\eqref{eq.euler.irrot} can be achieved is a direct consequence of
the construction of the reference profile $u^0$ as a potential flow:
$u^0(t,x) = \nabla \theta^0(t,x)$, where $\theta^0$ is smooth.

% ==============================================================================
\subsection{Convective term and flushing of the initial data}

We move on to order $\mathcal{O}(\varepsilon)$. Here, the initial data $\uinit$
comes into play. We build $u^1$ as the solution to:
\begin{equation} \label{eq.u1.ext}
	\left\{
		\begin{aligned}
			\partial_t u^1 + \left( u^0 \cdot \nabla \right) u^1 
			+ \left( u^1 \cdot \nabla \right) u^0 + \nabla p^1
			& = \Delta u^0 + \force^1
			&&\quad \textrm{in } (0, T) \times \Omext, \\
			\diverg u^1 & = 0
			&&\quad \textrm{in } (0, T) \times \Omext, \\
			u^1 \cdot n & = 0 
			&&\quad \textrm{on } (0, T) \times \partial\Omext, \\
			u^1(0, \cdot) & = \uinit
			&&\quad \textrm{in } \Omext,
		\end{aligned}
	\right.
\end{equation}
where~$\force^1$ is a forcing term supported 
in~$\bar{\Omext}\setminus\bar{\Omega}$. Formally, 
equation~\eqref{eq.u1.ext} also takes into account a residual 
term~$\Delta u^0$. Thanks to~\eqref{eq.euler.irrot}, we have 
$\Delta u^0 = \nabla (\diverg u^0) = \nabla \sigma^0$. It is thus 
smooth, supported in~$\bar{\Omext}\setminus\bar{\Omega}$ and can 
be canceled by incorporating it into $\force^1$.
The following lemma is natural thanks to the 
choice of a good flushing trajectory $u^0$:

\begin{lemma} \label{lemma.u1}
 Let $\uinit \in H^3(\Omext) \cap \ldiv$. There exists a force
 $\force^1 \in \mathcal{C}^1([0,T],H^1(\Omext)) \cap 
 \mathcal{C}^0([0,T],H^2(\Omext))$ such that the associated solution~$u^1$ to
 system~\eqref{eq.u1.ext} satisfies $u^1(T,\cdot) = 0$. Moreover, $u^1$ is 
 bounded in $L^\infty((0,T),H^3(\Omext))$.
 In the sequel, it is implicit that we extend
 $(u^1, p^1, \force^1)$ by zero after $T$.
\end{lemma}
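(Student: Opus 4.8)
The plan is to exploit the fact that the flow $\Phi^0$ of the reference field $u^0$ flushes every point of $\bar\Omega$ out of $\bar\Omega$ before time $T$, together with the linearity of system~\eqref{eq.u1.ext} in $u^1$. First I would absorb the residual $\Delta u^0 = \nabla\sigma^0$ into $\force^1$ as already noted in the text, so that the equation for $u^1$ becomes the linearized Euler equation around $u^0$ with a control force supported in $\bar\Omext\setminus\bar\Omega$. The key structural observation is that, because $u^0$ is irrotational (see~\eqref{eq.euler.irrot}), the vorticity $\omega^1 := \nabla\times u^1$ satisfies a pure transport equation: taking the curl of~\eqref{eq.u1.ext} kills the pressure and the $\nabla\times\big((u^1\cdot\nabla)u^0\big)$ contribution combines with $\nabla\times\big((u^0\cdot\nabla)u^1\big)$ into the linearized vorticity transport operator $\partial_t\omega^1 + (u^0\cdot\nabla)\omega^1 + (\text{lower order terms linear in }\omega^1)$ — in 2D simply $\partial_t\omega^1 + (u^0\cdot\nabla)\omega^1 = 0$ inside the region swept only by physical-domain fluid, and in 3D the stretching term $(\omega^1\cdot\nabla)u^0$ is also linear in $\omega^1$. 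Hence $\omega^1$ is transported (and stretched) along the characteristics of $u^0$, which all exit $\bar\Omega$ by time $T$.

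The construction then proceeds as follows. I would first extend $\uinit\in H^3(\Omext)\cap\ldiv$ and pick the force $\force^1$ on $(0,T)$ supported in $\bar\Omext\setminus\bar\Omega$ so that it ``completes'' the trajectory: the idea, as in the return-method arguments of Coron and Glass cited after Lemma~\ref{lemma.euler}, is that on the physical domain $\Omega$ the force plays no role, so one solves~\eqref{eq.u1.ext} forward with $\force^1 \equiv 0$ on $\bar\Omega$ and reads off what values of $u^1$ are produced near $\Gamma$; one then chooses $\force^1$ on $\bar\Omext\setminus\bar\Omega$ to glue these boundary traces to a field that has been completely evacuated. More concretely: since every $x\in\bar\Omega$ has some $t_x<T$ with $\Phi^0(0,t_x,x)\notin\bar\Omega$, and since this can be arranged (by the support properties of $u^0$ and a standard compactness/openness argument on the flow) to hold uniformly, i.e. there is $t_*<T$ with $\Phi^0(0,t_*,x)\notin\bar\Omega$ for \emph{all} $x\in\bar\Omega$, the vorticity $\omega^1(t,\cdot)$ vanishes on $\bar\Omega$ for $t\ge t_*$: its value at a point $y\in\bar\Omega$ at time $t$ comes from the initial vorticity at $\Phi^0(t,0,y)$, which lies outside $\bar\Omega$ where we are free to have prescribed zero initial data and zero forcing. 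Once $\omega^1(T,\cdot)=0$ on $\Omega$ and $u^1$ is divergence free, $u^1(T,\cdot)$ is a harmonic (curl-free, div-free) field on $\Omega$; using the impermeability condition $u^1\cdot n=0$ on $\BorderNav$ together with the freedom in choosing the trace of $u^1$ on $\Gamma$ (which is what designing $\force^1$ on the extension buys us), one forces the finitely many remaining degrees of freedom (the circulations around the handles of $\Omega$, controllable precisely because $\Gamma$ meets every connected component of $\partial\Omega$) to vanish, giving $u^1(T,\cdot)=0$ on $\Omega$. Finally, continue to extend by zero past $T$.

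The regularity claims follow from propagating $H^3$ regularity along the smooth flow $\Phi^0$: since $u^0\in\mathcal{C}^\infty([0,T]\times\bar\Omext)$, the transport estimates give $u^1\in L^\infty((0,T),H^3(\Omext))$ from $\uinit\in H^3$, and then $\partial_t u^1$ is read off from the equation~\eqref{eq.u1.ext}, which costs one derivative, yielding $\force^1\in\mathcal{C}^1([0,T],H^1)\cap\mathcal{C}^0([0,T],H^2)$ after one checks the corresponding bounds on the constructed force on $\bar\Omext\setminus\bar\Omega$ (where everything is smooth by construction and can be taken as smooth as desired, then truncated). The main obstacle I anticipate is not the transport of vorticity — that is clean — but rather the careful design of $\force^1$ on the extension so that (i) it is supported away from $\bar\Omega$, (ii) it produces exactly the boundary data on $\Gamma$ needed to kill the residual harmonic part including the topological circulations, and (iii) it retains the stated finite regularity while being compatible with $u^1(0,\cdot)=\uinit$; this gluing step is where the hypothesis that $\Gamma$ intersects all connected components of $\partial\Omega$ is essential and where one must invoke the return-method constructions of~\cite{MR1380673} and~\cite{MR1745685} rather than reprove them.
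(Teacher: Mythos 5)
Your overall plan — pass to the vorticity formulation, exploit the flushing property~\eqref{eq.flow.out} so that $\omega^1(T,\cdot)=0$, then kill the remaining harmonic part by invoking the controllability of circulations, deferring that topological step to~\cite{MR1380673} and~\cite{MR1745685} — matches the paper's strategy in spirit. But there is a concrete gap at the central step: you assert that the flushing times $t_x$ can be taken uniform, i.e.\ that there is a single $t_*<T$ with $\Phi^0(0,t_*,x)\notin\bar\Omega$ for \emph{all} $x\in\bar\Omega$, from which you conclude that $\omega^1$ vanishes identically on $\bar\Omega$ after $t_*$. The paper explicitly warns against this (``We do not require that the time $t_x$ be the same for all $x\in\Omext$. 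Indeed, it might not be possible to flush all of the points outside of the physical domain at the same time.''), and the warning is well founded: the flow $u^0$ is tangent to $\BorderNav$, so boundary points can only leave through $\Gamma\cap\partial\Omega$, different fluid parcels exit at different moments, and — crucially — parcels that have left $\bar\Omega$ may be carried back inside before $T$. A compactness/openness argument gives you a finite cover and locally uniform exit intervals, which is exactly what the paper uses, but it does not give a single global exit time, and the conclusion ``$\omega^1(t,\cdot)=0$ on $\bar\Omega$ for all $t\ge t_*$'' does not follow.

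The paper's Appendix~\ref{annex.euler} resolves this precisely by the device you skip: a partition of unity $\uinit=\sum_l\eta_l\uinit$ subordinate to balls $B_l$, each of which is guaranteed by~\eqref{eq.flow.out.cml} to sit inside a fixed square/cube $C_{m_l}\subset\bar\Omext\setminus\bar\Omega$ during a whole time window around some $t_l$. One then solves the free vorticity transport from each piece to get $\bar\omega_l$, multiplies by a cutoff $\beta(t-t_l)$ to switch it off \emph{while its support is inside $C_{m_l}$}, and reads off $\force_l$ as an explicit, compactly supported vector potential of $\dot\beta\,\bar\omega_l$ (the explicit 2D and 3D lifting formulas are given there, distinguishing inner and boundary squares/cubes). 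This per-piece switching is what makes the control both local in $\bar\Omext\setminus\bar\Omega$ and correct even when there is no common exit time, and it is also what produces the stated $\mathcal C^1([0,T],H^1)\cap\mathcal C^0([0,T],H^2)$ regularity of $\force^1$ by inheriting the regularity of $\nabla\times(\eta_l\uinit)$. Your proposal would need to be repaired by introducing some version of this localized, piecewise-in-time switching; as written, the ``gluing'' of boundary data on $\Gamma$ that you gesture at does not substitute for it, because the problem is not just at $\Gamma$ but in the interior, where vorticity that has left may return.
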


This lemma is mostly a consequence of the works on the Euler equation,
already mentioned in the previous paragraph, due to the first author in 2D, then 
to Glass in 3D. However, in these original works, the regularity obtained for the 
constructed trajectory would not be sufficient in our context. Thus, we provide
in Appendix~\ref{annex.euler} a constructive proof which enables us
to obtain the regularity for~$\force^1$ and~$u^1$ stated in
Lemma~\ref{lemma.u1}. We only give here a short overview of the main
idea of the proof. The interested reader can also start with the nice 
introduction given by Glass in~\cite{MR1660528}.

The intuition behind the possibility to control $u^1$ is to 
introduce $\omega^1 := \rot u^1$ and to write~\eqref{eq.u1.ext} in 
vorticity form, within the physical domain~$\Omega$:
 \begin{equation} \label{eq.w1}
  \left\{
   \begin{aligned}
    \partial_t \omega^1 + \left( u^0 \cdot \nabla \right) \omega^1 
    - \left( \omega^1 \cdot \nabla \right) u^0 & = 0
    &&\quad \textrm{in } (0,T) \times \Omega, \\
    \omega^1(0, \cdot) & = \rot \uinit
    &&\quad \textrm{in } \Omega.
   \end{aligned}
  \right.
 \end{equation}
The term $\left( \omega^1 \cdot \nabla \right) u^0$ is specific to 
the 3D setting and does not appear in 2D (where the vorticity is merely 
transported). Nevertheless, even in 3D, the support of the vorticity is 
transported \fm{by $u^0$}. Thus, thanks to hypothesis~\eqref{eq.flow.out}, 
$\omega^1$ will vanish inside $\Omega$ at time $T$ provided that we choose 
null boundary conditions for $\omega^1$ on the controlled boundary 
$\Gamma$ when the characteristics enter in the physical domain. 
Hence, we can build a trajectory such that 
$\fm{\omega^1}(T,\cdot) = 0$ inside $\Omega$. 
Combined with the divergence free condition and null boundary data,
this yields that $u^1(T,\cdot) = 0$ inside $\Omega$, at least for 
simple geometries.

% ==============================================================================
\subsection{Energy estimates for the remainder}
\label{par.r.estimate}

In this paragraph, we study the remainder defined in 
expansion~\eqref{eq.expansion.rot}. We write the equation for the remainder in 
the extended domain $\Omext$:
\begin{equation} \label{eq.R.rot}
	\left\{
	\begin{aligned}
		\partial_t \Rem 
		+ \left( \ue \cdot \nabla \right) \Rem
		- \varepsilon \Delta \Rem
		+ \nabla \pi^\varepsilon 
		& = f^\varepsilon - A^\varepsilon \Rem,
		&&\quad \textrm{in } (0,T)\times\Omext, \\
		\diverg \Rem & = 0
		&&\quad \textrm{in } (0,T)\times\Omext, \\
		\tanpart{\rot \Rem} 
		& = - \tanpart{\rot u^1}
		&&\quad \textrm{\fm{on} } (0,T)\times\partial\Omext, \\
		\Rem \cdot n & = 0
		&&\quad \textrm{\fm{on} } (0,T)\times\partial\Omext, \\
		\Rem(0, \cdot) & = 0
		&&\quad \textrm{in } \Omext,
	\end{aligned}
	\right.
\end{equation}
where we used the notations:
\begin{align}
 A^\varepsilon \Rem & := (\Rem\cdot\nabla)\left(u^0 + \varepsilon u^1\right), \\
 f^\varepsilon & := \varepsilon \Delta u^1 \fm{-} \varepsilon 
 (u^1\cdot\nabla)u^1.
\end{align}
We want to establish a standard $L^\infty(L^2)\cap L^2(H^1)$ energy estimate for
the remainder. As usual, formally, we multiply equation~\eqref{eq.R.rot} 
by $\Rem$ and integrate by parts. Since we are considering weak solutions, 
some integration by parts may not be justified because we do not have enough
regularity to give them a meaning. However, the usual technique applies: one
can recover the estimates obtained formally from the variational formulation
of the problem, the energy equality for the first terms of the expansion and 
the energy inequality of the definition of weak solutions 
(see~\cite[page 168]{MR2754340} for an example of such an argument).
We proceed term by term:
\begin{align}
 \label{eq.ibp.1}
 \int_{\Omext} \partial_t \Rem \cdot \Rem 
 & = \frac{1}{2}\ddt\int_{\Omext}\left|\Rem\right|^2, \\
 \label{eq.ibp.2}
 \int_{\Omext} \left( u^\varepsilon \cdot \nabla \right) \Rem \cdot \Rem 
 & = - \frac{1}{2} \int_{\Omext} \left(\diverg u^\varepsilon\right) 
 \left|\Rem\right|^2, \\
 \label{eq.ibp.3}
 - \varepsilon \int_{\Omext} \Delta \Rem \cdot \Rem 
 & = \varepsilon \int_{\Omext} \left| \nabla \times \Rem \right|^2
 - \varepsilon \int_{\partial\Omext} \fm{\left(\Rem \times (\nabla \times \Rem) 
 \right)} \cdot n, \\
 \label{eq.ibp.4}
 \int_{\Omext} \nabla \pi^\varepsilon \cdot \Rem 
 & = 0.
\end{align}
In~\eqref{eq.ibp.2}, we will use the fact that $\diverg u^\varepsilon = 
\diverg u^0 = \sigma^0$ is
\fm{known and bounded independently of $\Rem$}. In~\eqref{eq.ibp.3}, we use 
the boundary condition on $\Rem$ to estimate the boundary term:
\begin{equation} \label{eq.r.estimate.bt.shape}
 \begin{split}
  \left| \int_{\partial\Omext} \Rem \times (\nabla \times \Rem) \cdot n 
  \right|
  & = \left| \int_{\partial\Omext} \Rem \times (\nabla \times u^1) \cdot n 
  \right| \\
  & = \left| \int_{\Omext} \diverg \left( \Rem \times \omega^1 \right) 
  \right| \\
  & = \left| \int_{\Omext} (\nabla \times \Rem) \cdot \omega^1
  - \Rem \cdot (\nabla \times \omega^1) \right| \\
  & \leq \frac{1}{2} \int_{\Omext} \left|\nabla\times\Rem\right|^2
  +\frac{1}{2} \int_{\Omext} \left|\omega^1\right|^2
  +\frac{1}{2} \int_{\Omext} \left|\Rem\right|^2
  +\frac{1}{2} \int_{\Omext} \left|\nabla\times\omega^1\right|^2.
 \end{split}
\end{equation}
We split the forcing term estimate as:
\begin{equation} \label{eq.split.f}
  \left| \int_{\Omext} f^\varepsilon \cdot \Rem \right|
  \leq \fm{\frac{1}{2}} \left|f^\varepsilon\right|_{2} \left( 1 
  + \left|\Rem\right|_{2}^2\right).
\end{equation}
Combining 
estimates~\eqref{eq.ibp.1}-\eqref{eq.ibp.4},~\eqref{eq.r.estimate.bt.shape} 
and~\eqref{eq.split.f} yields:
\begin{equation} \label{eq.R.rot.gron1}
  \ddt |\Rem|^2_2 
  + \varepsilon |\nabla \times \Rem|^2_2 \leq 
  \left(\fm{2} \varepsilon \left|u^1\right|_{H^2}^2
  + \left|f^\varepsilon\right|_2 \right)
  + \left(\varepsilon + \left|\sigma^0\right|_\infty 
  + \fm{2} \left|A^\varepsilon\right|_\infty + 
  \left|f^\varepsilon\right|_{2}\right) 
  |\Rem|^2_2.
\end{equation}
Applying Grönwall's inequality 
by integrating over $(0,T)$ and using the null initial condition gives:
\begin{equation} \label{eq.R.rot.gron2}
 \left|\Rem\right|^2_{L^\infty(L^2)} + 
 \varepsilon \left|\nabla \times \Rem\right|^2_{L^2(L^2)}
 = \mathcal{O}(\varepsilon).
\end{equation}
This paragraphs proves that, once the source terms $\force^\varepsilon$ and 
$\sigma^\varepsilon$ are fixed \fm{as above}, any weak Leray solution 
to~\eqref{eq.nse.rot} \fm{is small at the final time. Indeed, thanks} to 
Lemma~\ref{lemma.euler} and Lemma~\ref{lemma.u1}, $u^0(T) = u^1(T) = 0$. 
At the final time,~\eqref{eq.R.rot.gron2} gives:
\begin{equation}
 \left| \ue(T,\cdot) \right|_{L^2(\Omext)}
 \leq \varepsilon \left| \Rem(T,\cdot) \right|_{L^2(\Omext)}
 = \mathcal{O}(\varepsilon^{3/2}).
\end{equation}

% ==============================================================================
\subsection{Regularization and local arguments}
\label{par.proof}

In this paragraph, we explain how to chain our arguments in order to prove
Theorem~\ref{thm.weak.null}. We will need to use a local argument to finish
bringing the velocity field exactly to the null equilibrium state \fm{(see 
paragraph~\ref{par.local.results} for references on null controllability
of Navier-Stokes)}:

\begin{lemma}[\cite{MR2224824}] \label{lemma.local}
 Let $T > 0$. There exists $\delta_T > 0$ such that, for any $\uinit \in 
 H^3(\fm{\Omext})$ which is divergence free, tangent to $\BorderExt$, satisfies
 the compatibility assumption $N(\uinit) = 0$ on $\BorderExt$ and of size
 $\left|\uinit\right|_{H^3(\fm{\Omext})} \leq \delta_T$, there exists a control 
 $\force \in \controlspaceforce$ supported outside of $\bar{\Omega}$ such that 
 the strong solution to~\eqref{eq.main.ext} with $\sigma = 0$ satisfies 
 $u(T,\cdot) = 0$.
\end{lemma}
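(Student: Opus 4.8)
The plan is to reduce the statement to known small-data null controllability results for Navier-Stokes with Navier slip-with-friction boundary conditions. The subtlety, compared to the classical local results, is that here the control is not distributed on a portion of the boundary of the domain under consideration, but is instead an interior source term $\force$ supported in the "collar" $\bar\Omext \setminus \bar\Omega$, while the genuine boundary $\BorderExt$ carries only the homogeneous Navier conditions $u\cdot n = 0$, $N(u) = 0$. First I would recall that, since $\Gamma$ (the truly controlled part of $\partial\Omega$) lies in the interior of $\Omext$ and $\BorderNav \subset \BorderExt$, a source term supported in the collar acts exactly like a boundary control on $\Gamma$: restricting the constructed extended trajectory to $\Omega$ produces a controlled trajectory of the original system \eqref{eq.main} whose boundary values on $\Gamma$ are the recovered controls. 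So it suffices to produce a strong solution on $\Omext$ with $\sigma = 0$ that vanishes at time $T$.

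The key step is to invoke the local null controllability theorem of Guerrero \cite{MR2224824} (our Lemma is literally attributed to that reference). That result gives \fm{small-time} local null controllability to trajectories for Navier-Stokes with general (possibly nonlinear) Navier boundary conditions when one controls a nonempty open portion of the boundary. Applying it on $\Omext$ with the controlled portion being any open subset of $\BorderExt$ contained in the collar region's boundary — or, more directly, applying the distributed-control version of the result with control supported in an open subset $\omega \Subset \bar\Omext\setminus\bar\Omega$ — yields a threshold $\delta_T > 0$ such that any initial datum which is divergence free, tangent to $\BorderExt$, satisfies the compatibility condition $N(\uinit) = 0$, and has $|\uinit|_{H^3(\Omext)} \le \delta_T$, can be steered to rest at time $T$ by a control in the stated regularity class $\controlspaceforce$. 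The regularity $H^3$ on the initial data (rather than $H^{2d-2}$ or $L^2$) is what guarantees, via parabolic smoothing and the maximal-regularity theory for the Stokes operator with Robin/Navier boundary conditions (cf. \cite{MR2325694},~\cite{MR2387243},~\cite{MR2285430}), that the controlled trajectory is a \emph{strong} solution, which is what we need in order for the energy/regularization bootstrap of Section~\ref{par.proof} to connect to it.

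The main technical point — and the one I expect to require the most care — is the matching of functional frameworks and compatibility conditions at the junction time: the trajectory produced by the earlier stages (after the Euler flushing and the dissipation argument) is a priori only a weak Leray solution, small in $L^2(\Omext)$, whereas Lemma~\ref{lemma.local} demands a datum small in $H^3(\Omext)$ and satisfying $N(\uinit)=0$. Bridging this gap is not part of the present lemma's proof but of its use; within the lemma itself the delicate ingredient is simply to check that Guerrero's hypotheses (in particular the nonlinear Navier compatibility condition and the geometric requirement on the control region) are met by our extended configuration, and that the control he constructs can be taken supported strictly outside $\bar\Omega$ — which is immediate since we are free to place the control region $\omega$ anywhere in the collar $\bar\Omext\setminus\bar\Omega$, disjoint from $\bar\Omega$. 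Once these checks are in place, the lemma follows directly from \cite{MR2224824}, possibly after the routine localization argument turning a boundary control on a piece of $\BorderExt$ into an interior control supported in a neighborhood of that piece inside the collar.
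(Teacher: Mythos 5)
Your proposal is correct and matches the paper's approach: the paper does not prove Lemma~\ref{lemma.local} but simply attributes it to Guerrero's local controllability result for Navier-Stokes with Navier slip-with-friction boundary conditions~\cite{MR2224824}, noting only afterwards that existence and uniqueness of a strong solution for such small $H^3$ data comes from the same reference. Your reduction to Guerrero's theorem, the observation that the distributed control can be placed in an open set $\omega \Subset \Omext\setminus\bar\Omega$, and the check that the $H^3$ regularity and the compatibility condition $N(\uinit)=0$ are exactly what Guerrero's hypotheses require, is precisely the argument the authors have in mind. One minor caveat: your first suggested route (boundary control on a piece of $\BorderExt$) would conflict with the requirement that the homogeneous Navier conditions hold on all of $\BorderExt$ in~\eqref{eq.main.ext}; the distributed-control version, which you also propose, is the one that applies directly here.
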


\fm{In} this context of small initial data, the existence and uniqueness 
of a strong solution is proved in~\cite{MR2224824}. We also use the following 
smoothing lemma for our Navier-Stokes system:

\begin{lemma} \label{lemma.regularization}
    Let $T > 0$. There exists a continuous function $C_T$ with $C_T(0) = 0$, 
    such that, if $\uinit \in \ldiv$ and $u \in \mathcal{C}_w^0([0,T];\ldiv) 
    \cap L^2((0,T); H^1(\Omext))$ is a weak Leray solution 
    to~\eqref{eq.main.ext}, with $\force = 0$ and $\sigma = 0$:
    \begin{equation}
        \exists t_u \in [0,T], \quad \left|u(t_u,\cdot)\right|_{H^3(\Omext)}
        \leq C_T \left(\left|\uinit\right|_{L^2(\Omext)}\right).
    \end{equation}
\end{lemma}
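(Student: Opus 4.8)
The plan is to establish an instantaneous smoothing estimate for weak Leray solutions of the homogeneous Navier-Stokes system with Navier slip-with-friction boundary conditions, by climbing the regularity ladder from $L^2$ to $H^3$ using the standard parabolic bootstrap, and then to extract a \emph{good time} $t_u$ via an averaging (Chebyshev) argument. First I would record the global energy inequality coming from the definition of a weak Leray solution: with $\force=0$, $\sigma=0$, we have $|u(t)|_{L^2(\Omext)}^2 + 4\iint_{(0,t)\times\Omext}|D(u)|^2 \le |\uinit|_{L^2(\Omext)}^2 + (\text{boundary term from }M)$, and since $M$ is smooth the boundary term is controlled by $\int_0^t |u|_{L^2(\Omext)}^2$ after a trace/interpolation estimate, so Grönwall gives $|u|_{L^\infty((0,T);L^2)}^2 + |u|_{L^2((0,T);H^1)}^2 \le C_T\big(|\uinit|_{L^2(\Omext)}\big)$, with $C_T$ continuous, vanishing at $0$ (the bound is in fact quadratic in $|\uinit|_{L^2}$ up to the Grönwall factor).

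Next I would run the bootstrap. Because $u\in L^2((0,T);H^1)$, on a full-measure set of times $u(t)\in H^1$; pick such a time $t_1$ with $|u(t_1)|_{H^1}^2 \le \frac{C_T(|\uinit|_{L^2})}{T}$ (Chebyshev). Restart the Stokes/Navier-Stokes evolution from $t_1$ and perform the $H^1$ energy estimate (multiply by $-\Delta u$, or rather by $A u$ with $A$ the Stokes operator for the Navier boundary conditions, to respect the boundary terms): the convective term is handled by the usual $\|u\|_{H^1}\|u\|_{H^1}\|u\|_{H^2}$ bound together with interpolation and Young's inequality, and the boundary contributions from $M$ and from the curvature of $\BorderExt$ are lower order. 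This yields $u\in L^\infty((t_1',T);H^1)\cap L^2((t_1',T);H^2)$ for any $t_1'>t_1$, with norms controlled by a continuous function of $|\uinit|_{L^2}$ vanishing at $0$. Then pick a good time $t_2$ where $|u(t_2)|_{H^2}$ is controlled, restart, and repeat the estimate one order higher to reach $u\in L^\infty((t_2',T);H^2)\cap L^2((t_2',T);H^3)$, and finally pick $t_u$ where the $H^3$ norm is controlled by the $L^2((t_2',T);H^3)$ bound divided by the interval length. At each stage one uses that the parabolic smoothing estimates for the Stokes problem with Navier (Robin-type) boundary conditions are available — I would cite the maximal-regularity references already invoked in the excerpt, \cite{MR2325694},~\cite{MR2387243},~\cite{MR2285430} — and that the nonlinear term is subcritical enough in dimension $d\le 3$ for the iteration to close on short time intervals. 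Tracking the dependence on the data through each Grönwall step gives the desired continuous $C_T$ with $C_T(0)=0$.

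The main obstacle I expect is bookkeeping rather than conceptual: one must (i) ensure every energy estimate is performed in a way compatible with the \emph{weak} setting — i.e. justified via Galerkin approximations and the strong energy inequality~\eqref{eq.SEI}, exactly as the authors do elsewhere (cf.\ the remark after~\eqref{eq.SEI} and \cite[page~155, page~168]{MR2754340}) — rather than by formal manipulations on a solution not yet known to be smooth; and (ii) handle the boundary terms generated at each level of the bootstrap, since integrating by parts against $\Delta u$ or higher Stokes-operator powers produces curvature and friction-matrix terms on $\BorderExt$ that are not present in the flat, Dirichlet case. These are all absorbable by trace inequalities and interpolation, but they are where the smoothness of $\Omext$ and of $M$ is genuinely used. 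A secondary point is that one only gets smoothing \emph{after} an arbitrarily short time, which is why the statement produces a time $t_u\in[0,T]$ rather than control of $u(0,\cdot)$; the Chebyshev selection at each rung of the ladder is what converts the $L^2_t$ regularity into a pointwise-in-time $H^3$ bound, at the cost of not controlling \emph{which} time $t_u$ is, but this is exactly what is needed for the chaining argument of paragraph~\ref{par.proof}.
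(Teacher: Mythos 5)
Your plan is correct and takes essentially the same route as the paper: a parabolic regularity bootstrap from $L^2$ up to $H^3$, with a Chebyshev-type averaging argument at each rung to select a good time where the $L^2_t$ bound becomes a pointwise-in-time bound. The paper itself refers to Temam's Remark~3.2 in~\cite{MR645638} and gives a standalone proof (in the slightly more general form of Lemma~\ref{lemma.regularization.traj}, which is what you should look at) that proceeds exactly by these three stages: energy estimate $+$ Chebyshev to get $u(t_1)\in H^1$; $H^1$ estimate by testing against $\mathbbm{P}\Delta u$ on a short interval $+$ Chebyshev to get $u(t_2)\in H^2$; then a third estimate to reach $H^3$. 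The one place where the paper's execution differs from yours is the final rung: rather than testing directly against a higher power of the Stokes operator (which, as you rightly flag, generates the hardest-to-control curvature and friction-matrix boundary terms), the paper differentiates the equation in time, runs the \emph{$L^2$-level} energy estimate on $\partial_t u$, and then recovers the $H^2$ (and then $H^3$) spatial bound from elliptic/Stokes maximal regularity applied to the equation viewed as a stationary Stokes problem at fixed time. This sidesteps exactly the bookkeeping you identify as the main obstacle. Your version should also close, but the time-differentiation route is the cleaner way to cross the $H^2\to H^3$ threshold under Navier conditions, and it is worth adopting.
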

\begin{proof}
 This result is proved by Temam in~\cite[Remark 3.2]{MR645638} in the harder 
 case of Dirichlet boundary condition. His method can be adapted to the Navier 
 boundary condition and one could track down the constants to explicit the 
 shape of the function $C_T$. For the sake of completeness, we provide a 
 standalone proof in a slightly more general context (see 
 Lemma~\ref{lemma.regularization.traj}, Section~\ref{section.trajectories}).
\end{proof}

We can now explain how we combine these arguments to prove 
Theorem~\ref{thm.weak.null}. Let $T > 0$ be the allowed control time and 
$\uinit \in \fm{\Hspace}$ the (potentially large) initial data to be 
controlled. The proof of Theorem~\ref{thm.weak.null} follows the following 
steps:
\begin{itemize}
	\item We start by extending $\Omega$ into $\Omext$ as explained in 
	paragraph~\ref{par.extension}. We also extend the initial data~$\uinit$ to 
	all of $\Omext$, still denoting it by~$\uinit$. We choose an extension
	such that~$\uinit\cdot n = 0$ on~$\BorderExt$ and 
	$\sigma_* := \diverg \uinit$ is smooth (and supported in 
	$\Omext \setminus \Omega$). We start with a short preparation phase where we
	let $\sigma$ decrease from its initial value to zero, relying on the existence 
	of a weak solution once a smooth $\sigma$ profile is fixed, say 
	$\sigma(t,x) := \beta(t) \sigma_*$, where $\beta$ smoothly decreases
	from $1$ to $0$. Then, once the data is divergence free, we use 
	Lemma~\ref{lemma.regularization} to deduce the existence of a time 
	$T_1 \in (0,T/4)$ such that $u(T_1,\cdot) \in H^3(\Omext)$. This is why
	we can assume that the new "initial" data has $H^3$ regularity and is
	divergence free. We can thus apply Lemma~\ref{lemma.u1}.
	\item Let $T_2 := T/2$. Starting from this new smoother initial data 
	$u(T_1,\cdot)$, we proceed with the	small-time global approximate 
	controllability method explained above on a time interval of size 
	$T_2-T_1\geq T/4$. For any $\delta > 0$, we know that we can build \fm{a}
	trajectory starting from $u(T_1,\cdot)$ and such that $u(T_2,\cdot)$ is 
	smaller than $\delta$ in $L^2(\Omext)$. It particular, it can be made
	small enough such that $C_{\frac{T}{4}}(\delta) \leq \delta_{\frac{T}{4}}$, 
	where $\delta_{\frac{T}{4}}$ comes from Lemma~\ref{lemma.local} and
	the function $C_{\frac{T}{4}}$ comes from Lemma~\ref{lemma.regularization}.
	\item Repeating the regularization argument of 
	Lemma~\ref{lemma.regularization}, we deduce the existence of
	a time $T_3\in\left(\frac{T}{2}, \frac{3T}{4}\right)$ such that 
	$u(T_3,\cdot)$ is smaller than $\delta_{\frac{T}{4}}$ in $H^3(\Omext)$.
	\item We use Lemma~\ref{lemma.local} on the time interval $\left[T_3, T_3 + 
	\frac{T}{4}\right]$ to reach exactly zero. Once the system is at rest, it 
	stays there until the final time $T$.
\end{itemize}

This concludes the proof of Theorem~\ref{thm.weak.null} in the case of the slip
condition. For the general case, we will use the same proof skeleton, but we 
will need to control the boundary layers. In the following sections, we explain 
how we can obtain \fm{small-time} global approximate null controllability in the
general case.

% ==============================================================================
\section{Boundary layer expansion and dissipation} 
\label{section.friction}
% ==============================================================================

As in the previous section, the \fm{allotted} physical control time $T$ is 
fixed (and
potentially small). We introduce an arbitrary mathematical time scale 
$\varepsilon \ll 1$ and we perform the usual scaling 
$\ue(t,x) := \varepsilon u(\varepsilon t, x)$ and 
$p^\varepsilon(t,x) := \varepsilon^2 p(\varepsilon t, x)$. 
In this harder setting involving a boundary layer expansion, we do not try to
achieve approximate controllability towards zero in the smaller physical time 
\fm{interval} $[0,\varepsilon T]$ like it was possible to do in the previous 
section. 
Instead, we will use the virtually long mathematical time interval to dissipate 
the boundary layer. Thus, we consider $(\ue,p^\varepsilon)$ the solution to:
\begin{equation} \label{eq.nse}
 \left\{
 \begin{aligned}
   \partial_t \ue + \left( \ue \cdot \nabla \right) \ue
   - \varepsilon \Delta \ue + \nabla p^\varepsilon & = \force^\varepsilon
   &&\quad \textrm{in } (0, T/\varepsilon) \times \Omext, \\
   \diverg \ue & = \sigma^\varepsilon
   &&\quad \textrm{in } (0, T/\varepsilon) \times \Omext, \\
   \ue \cdot n & = 0 
   &&\quad \textrm{on } (0, T/\varepsilon) \times \BorderExt, \\
   N(\ue) & = 0 
   &&\quad \textrm{on } (0, T/\varepsilon) \times \BorderExt, \\
   \ue \rvert_{t = 0} & = \varepsilon \uinit
   &&\quad \textrm{in } \Omext.
 \end{aligned}
 \right.
\end{equation}
Here again, we do not expect to reach exactly zero \fm{with this part of the 
strategy}. However, 
we would like to build a sequence of solutions such that 
$\left|u\left(T,\cdot\right)\right|_{L^2(\Omext)}=o(1)$.
As in Section~\ref{section.slip}, this will allow us to apply a local result 
with a small initial data, a fixed time and a fixed viscosity. 
Due to the scaling chosen, this conditions translates into proving that  
$\left|\ue\left(\frac{T}{\varepsilon},\cdot\right)\right|_{L^2(\Omext)}=o(\varepsilon)$.
Following \fm{and enhancing} the original boundary layer expansion for Navier 
slip-with-friction boundary conditions proved by Iftimie and the third author 
in~\cite{MR2754340}, we introduce the following expansion:
\begin{align} 
 \label{eq.expansion}
 \ue(t,x) 
 & = u^0(t,x) 
 + \sqrt{\varepsilon} v\left(t,x, \frac{\varphi(x)}{\sqrt{\varepsilon}}\right)
 + \varepsilon u^1(t,x) 
 + \ldots
 + \varepsilon \Rem(t,x), \\
 \label{eq.expansion.p}
 p^\varepsilon(t,x) 
 & = p^0(t,x) 
 + \varepsilon p^1(t,x)
 + \ldots 
 + \varepsilon \pi^\varepsilon(t,x).
\end{align}
\fm{The forcing terms are expanded as:}
\begin{align}
 \force^\varepsilon(t,x) 
 & = \force^0(t,x) 
 + \sqrt{\varepsilon} \force^v\left(t,x, \frac{\varphi(x)}{\sqrt{\varepsilon}}\right)
 + \varepsilon \force^1(t,x), \\
 \sigma^\varepsilon(t,x)
 & = \sigma^0(t,x).
\end{align}
Compared with expansion~\eqref{eq.expansion.rot}, 
expansion~\eqref{eq.expansion} introduces a boundary correction $v$.
Indeed, $u^0$ does not satisfy the Navier slip-with-friction 
boundary condition on $\BorderExt$. The purpose of the second term~$v$ is to 
recover this boundary condition by introducing the tangential boundary layer 
generated by~$u^0$. In equations~\eqref{eq.expansion} 
and~\eqref{eq.expansion.p},
the missing terms are technical terms which will help us prove that the
remainder is small. We give the details of this technical part in
Section~\ref{section.remainder}. We use the same profiles $u^0$ and $u^1$ 
as in the previous section (extended by zero after $T$). \fm{Hence, 
$u^\varepsilon \approx \sqrt{\varepsilon} v$ after $T$ and we must understand
the behavior of this boundary layer residue that remains after the short
inviscid control strategy.}

% ==============================================================================
\subsection{Boundary layer profile equations}
\label{par.def.v}

Since the Euler system is a first-order system, we have only been able to 
impose a single scalar boundary condition in~\eqref{eq.euler.ext} (namely, 
$u^0\cdot n=0$ on $\BorderExt$). Hence, the full Navier slip-with-friction
boundary condition is not satisfied by $u^0$. Therefore, at order
$\mathcal{O}(\sqrt{\varepsilon})$, we introduce a tangential boundary layer 
correction~$v$. This profile is expressed in terms both 
of the slow space variable $x \in \Omext$ and a fast scalar variable 
$z = \varphi(x)/\sqrt{\varepsilon}$. As in~\cite{MR2754340}, $v$ is the 
solution to:
\begin{equation} \label{eq.v}
 \left\{
 \begin{aligned}
  \partial_t v + \tanpart{(u^0 \cdot \nabla) v + (v \cdot \nabla) u^0}
   + u^0_\flat z \partial_z v - \partial_{zz} v & = \force^v
   &&\quad \textrm{in } \R_+ \times \fm{\bar{\Omext} \times \R_+}, \\
   \partial_z v(t, x, 0) & = g^0(t,x) 
   &&\quad \textrm{in } \R_+ \times \fm{\bar{\Omext}}, \\
   v(0, \fm{x, z}) & = 0 
   &&\quad \textrm{in } \fm{\bar{\Omext}\times \R_+}, \\
 \end{aligned}
 \right.
\end{equation}
where we introduce the following definitions:
\begin{align} 
 \label{eq.u0flat}
 u^0_\flat(t,x) & \fm{:=} - \frac{u^0(t,x) \cdot n(x)}{\varphi(x)}
 & \textrm{in } \R_+ \times \Omext, \\
 \label{eq.def.g0}
 g^0(t,x) & \fm{:=} 2 \chi(x) N(u^0)(t,x)
 & \textrm{in } \R_+ \times \Omext.
\end{align}
Unlike in~\cite{MR2754340}, we introduced an inhomogeneous source 
term~$\force^v$ in~\eqref{eq.v}. This corresponds to a smooth control term 
\fm{whose} $x$-support is located within $\bar{\Omext} \setminus \bar{\Omega}$. 
Using the transport term, this outside control will enable us to modify the 
behavior of $v$ inside the physical domain $\Omega$. \fm{Let us state the 
following points about equations~\eqref{eq.v},~\eqref{eq.u0flat} 
and~\eqref{eq.def.g0}:}
\begin{itemize}
    \item \fm{The boundary layer profile depends on $d+1$ spatial variables 
    ($d$ 
    slow variables $x$ and one fast variable $z$) and is thus not set in 
    curvilinear coordinates. This approach used in~\cite{MR2754340} lightens 
    the computations. It is implicit that $n$ actually refers to the extension 
    $-\nabla\varphi$ of the normal (as explained in 
    paragraph~\ref{par.extension}) and that this extends 
    formulas~\eqref{eq.def.tanpart} defining the tangential part of a vector
    field and~\eqref{eq.def.navier} defining the Navier operator 
    inside~$\Omext$.}
    \item \fm{The boundary profile is tangential, even inside the domain.
    For any $x\in \bar{\Omext}$, $z \geq 0$ and $t\geq 0$,
    we have $v(t,x,z) \cdot n(x) = 0$. It is easy to check that, as soon as
    the source term $\force^v \cdot n = 0$, the evolution equation~\eqref{eq.v} 
    preserves the relation $v(0,x,z)\cdot n(x) = 0$ of the initial time. This
    orthogonality property is the reason why equation~\eqref{eq.v} is linear.
    Indeed, the quadratic term $(v\cdot n) \partial_z v$ should have been taken
    into account if it did not vanish. In the sequel, we will check that our
    construction satisfies the property $\force^v \cdot n = 0$.}
    \item In~\eqref{eq.def.g0}, we introduce a smooth \fm{cut-off} function 
    $\chi$, 
    satisfying $\chi = 1$ on $\BorderExt$. This is intended to help us 
    guarantee that $v$ is compactly supported near $\BorderExt$, while ensuring
    that $v$ compensates the Navier slip-with-friction boundary trace of~$u^0$.
    \fm{See paragraph~\ref{par.localize} for the choice of~$\chi$.}
    \item Even though $\varphi$ vanishes on~$\fm{\BorderExt}$, $u^0_\flat$ is 
    not 
    singular near the boundary because of the impermeability condition 
    $u^0\cdot n= 0$. Since $u^0$ is smooth, a Taylor expansion proves
    that $u^0_\flat$ is smooth in~$\bar{\Omext}$.
\end{itemize}

% ==============================================================================
\subsection{Large time asymptotic decay of the boundary layer profile}

In the previous paragraph, we defined the boundary layer profile through 
equation~\eqref{eq.v} for any $t \geq 0$. Indeed, we will need this expansion 
to hold on the large time interval $[0,T/\varepsilon]$. Thus, we prefer to 
define it directly for any $t \geq 0$ in order to stress out that this boundary
layer profile does not depend in any way on~$\varepsilon$. Is it implicit that, 
for $t \geq T$, the Euler reference flow~$u^0$ is extended by~$0$. Hence, for 
$t \geq T$, system~\eqref{eq.v} reduces to a parametrized heat equation on the 
half line $z \geq 0$ (where the slow variables $x \in \Omext$ play the role 
of parameters):
\begin{equation} \label{eq.v.after}
  \left\{
  \begin{aligned}
    \partial_t v - \partial_{zz} v & = 0,
    &&\quad \textrm{in } \R_+ \times \Omext, \quad \textrm{for } t \geq T, \\
    \partial_z v(t, x, 0) & = 0 
    &&\quad \textrm{in } \{0\} \times \Omext, \quad \textrm{for } t \geq T.
  \end{aligned}
  \right.
\end{equation}
The behavior of the solution to~\eqref{eq.v.after} depends on its ``initial''
data $\bar{v}(x,z) := v(T,x,z)$ at time $T$. Even without any assumption 
on~$\bar{v}$, this heat system exhibits smoothing properties and dissipates 
towards the null equilibrium state. It can for example be proved that:
\begin{equation} \label{eq.relax}
 \left| v(t, x, \cdot) \right|_{L^2(\R_+)}
 \lesssim
 t^{-\frac{1}{4}} \left| \bar{v}(x, \cdot) \right|_{L^2(\R_+)}.
\end{equation}
However, as the equation is set on the half-line $z \geq 0$, the energy decay 
obtained in~\eqref{eq.relax} is rather slow. Moreover, without any additional
assumption, this estimate cannot be improved. It is indeed standard to prove
asymptotic estimates for the solution $v(t,x,\cdot)$ involving the 
corresponding 
Green function (see~\cite{MR2727993},~\cite{MR1183805}, or~\cite{MR571048}). 
Physically, this is due to the fact that the average of $v$ is preserved under 
its evolution by equation~\eqref{eq.v.after}. The energy contained by low 
frequency modes decays slowly. Applied at the final time $t = T/\varepsilon$,
estimate~\eqref{eq.relax} yields:
\begin{equation} \label{eq.relax.v.naive}
 \left| \sqrt{\varepsilon} 
 v\left(\frac{T}{\varepsilon},\cdot,\frac{\varphi(\cdot)}
 {\sqrt{\varepsilon}}\right) \right|_{L^2(\Omext)}
 = \mathcal{O}\left(\varepsilon^{\frac{1}{2}+\frac{1}{4}+\frac{1}{4}}\right),
\end{equation}
where the last $\varepsilon^{\frac{1}{4}}$ factor comes from the Jacobian of
the fast variable scaling (see~\cite[Lemma~3, page~150]{MR2754340}). Hence, the 
natural decay $\mathcal{O}(\varepsilon)$ obtained in~\eqref{eq.relax.v.naive}
is not sufficient to provide an asymptotically small boundary layer residue in 
the physical scaling. After division by $\varepsilon$, we only obtain a
$\mathcal{O}(1)$ estimate. This motivates the fact that we need to design a 
control strategy to enhance the natural dissipation of the boundary layer 
residue after the main inviscid control step is finished.

Our strategy will be to guarantee that $\bar{v}$ satisfies a finite number of 
vanishing moment conditions for $k \in \N$ of the form:
\begin{equation} \label{eq.moments.k}
 \forall x \in \Omext, \quad \int_{\R_+} z^k \bar{v}(x,z) \dz = 0.
\end{equation}
\fm{These} conditions also correspond to vanishing derivatives at zero 
for the Fourier transform in~$z$ of~$\bar{v}$ (or its even extension to~$\R$). 
If we succeed to kill enough moments in the boundary layer at the end of the 
inviscid phase, we can obtain arbitrarily good polynomial decay properties. 
For~$s, n \in \N$, let us introduce the following weighted Sobolev spaces:
\begin{equation} \label{eq.def.h2n}
 H^{s,n}(\R) :=
 \left\{ f \in H^s(\R), \quad 
 \sum_{\alpha = 0}^s \int_{\R} (1+z^2)^n |\partial_z^\alpha f(z)|^2 \dz 
 < + \infty \right\},
\end{equation}
which we endow with their natural norm. We prove in the following lemma that 
vanishing moment conditions yield polynomial decays in these weighted spaces 
for a heat equation set on the real line.

\begin{lemma} \label{lemma.f}
  Let $s, n \in \N$ and $f_0 \in H^{s,n+1}(\R)$ satisfying, 
  for $0 \leq k < n$,
  \begin{equation} \label{eq.f.moment}
    \int_\R z^k f_0(z) \dz  = 0.
  \end{equation}
  Let $f$ \fm{be} the solution to the heat equation on $\R$ with initial data 
  $f_0$:
  \begin{equation} \label{eq.f.after}
    \left\{
    \begin{aligned}
      \partial_t f - \partial_{zz} f & = 0
      &&\quad \textrm{in } \R, \quad \textrm{for } t \geq 0, \\
      f(0,\cdot) & = f_0
      &&\quad \textrm{in } \R, \quad \textrm{for } t = 0.
    \end{aligned}
    \right.
  \end{equation}
  There exists a constant $C_{s,n}$ independent on $f_0$ such that, for 
  $0 \leq m \leq n$,
  \begin{equation} \label{eq.lemma.f}
    \left| f(t,\cdot) \right|_{H^{s,m}}
    \leq 
    C_{s,n}
    \left| f_0 \right|_{H^{s,n+1}} 
    \left| \frac{\ln (2+t)}{2 + t} \right|^{\frac{1}{4} + \frac{n}{2} - 
    \frac{m}{2}}.
  \end{equation}
\end{lemma}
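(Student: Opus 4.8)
The natural approach is to work on the Fourier side in the $z$ variable. Since $f$ solves the heat equation on all of $\mathbb{R}$, its Fourier transform satisfies $\hat{f}(t,\zeta) = e^{-t\zeta^2}\hat{f_0}(\zeta)$. The moment conditions~\eqref{eq.f.moment} translate precisely into $\partial_\zeta^k \hat{f_0}(0) = 0$ for $0 \le k < n$, so by Taylor's theorem (using that $f_0 \in H^{s,n+1}$ gives $\hat{f_0} \in C^n$ with controlled derivatives) we have $|\hat{f_0}(\zeta)| \lesssim |\zeta|^n \|f_0\|_{H^{0,n}}$ near the origin, and more generally $|\hat{f_0}(\zeta)| \lesssim |\zeta|^{n}(1+|\zeta|)^{-n}\|f_0\|_{H^{0,n}}$ is not quite what we want — rather, the relevant bound is that $\zeta \mapsto \hat{f_0}(\zeta)/\zeta^n$ extends to an $L^2$-type object controlled by $\|f_0\|_{H^{0,n+1}}$. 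The weighted norm $\|f(t,\cdot)\|_{H^{s,m}}$ is, up to constants, $\sum_{\alpha\le s}\sum_{j\le m}\|\zeta^\alpha \partial_\zeta^j \hat{f}(t,\cdot)\|_{L^2}$, so I need to estimate $\zeta^\alpha \partial_\zeta^j(e^{-t\zeta^2}\hat{f_0}(\zeta))$ in $L^2(d\zeta)$.

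\textbf{Main steps.} First I would record the Fourier-side dictionary: multiplication by $z^k$ becomes $i^k\partial_\zeta^k$, and the weight $(1+z^2)^n$ corresponds to controlling up to $n$ derivatives in $\zeta$; thus $f_0 \in H^{s,n+1}$ means $\hat{f_0}$ has $n+1$ derivatives in $L^2$ and $f_0$'s own $s$ derivatives give $\zeta^s$-weighted control of $\hat f_0$. Second, I would expand $\partial_\zeta^j(e^{-t\zeta^2}\hat{f_0})$ by Leibniz into terms $q_{j-i}(t,\zeta)\,e^{-t\zeta^2}\,\partial_\zeta^i \hat{f_0}(\zeta)$ where $q_\ell(t,\zeta)$ is a polynomial in $\zeta$ of degree $\ell$ with coefficients that are monomials in $t$ of degree between $\lceil \ell/2\rceil$ and $\ell$; the worst case for large $t$ is the lowest power of $t$, i.e. $t^{\lceil \ell/2\rceil}\zeta^\ell$-type. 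Third — the heart of the matter — I would split each resulting $L^2(d\zeta)$ integral into the low-frequency region $|\zeta| \le (2+t)^{-1/2}$ and the high-frequency region $|\zeta| > (2+t)^{-1/2}$. On high frequencies, the Gaussian $e^{-t\zeta^2}$ (and, for the terms where not all derivatives fell on it, a fraction of it) beats every polynomial: a change of variables $\eta = \sqrt{t}\zeta$ produces a power $(2+t)^{-(\text{something})/2}$ times $\|f_0\|_{H^{s,n+1}}$, and a careful bookkeeping shows this "something" is at least $\frac14 + \frac{n}{2} - \frac{m}{2}$ (in fact slightly better, with no logarithm — the log only appears from the low-frequency piece). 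On low frequencies, I use the Taylor estimate coming from the moment conditions: $|\partial_\zeta^i\hat{f_0}(\zeta)| \lesssim |\zeta|^{\,n-i}\,\|f_0\|_{H^{0,n+1}}$ for $i \le n$ (with the convention that for $i > $ the available smoothness we instead just bound crudely, but $f_0\in H^{s,n+1}$ and $m\le n$ keep us in range), so the integrand is bounded by a high power of $|\zeta|$ over the tiny interval $|\zeta|\le(2+t)^{-1/2}$, again yielding a negative power of $(2+t)$. Finally I would combine the low- and high-frequency bounds; the logarithmic factor $\ln(2+t)$ arises exactly in the borderline case where the low-frequency polynomial power matches the integration scale, i.e. the worst combination of Leibniz terms, and inflating the pure power $(2+t)^{-1/4-n/2+m/2}$ by $|\ln(2+t)|^{1/4+n/2-m/2}$ absorbs it. Summing over the finitely many $\alpha \le s$, $j \le m$ and Leibniz terms gives the constant $C_{s,n}$.

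\textbf{Anticipated obstacle.} The bookkeeping in the high-frequency estimate is where care is genuinely needed: one must track, for each Leibniz term $q_{j-i}(t,\zeta)e^{-t\zeta^2}\partial_\zeta^i\hat f_0$, how many powers of $t$ and $\zeta$ appear, pair the leftover Gaussian against the $\zeta$-powers via $\eta=\sqrt t\zeta$, and use the moment-vanishing only when it genuinely helps (low frequencies, $i\le n$). The subtle point is verifying that \emph{every} term produces at least the claimed exponent $\frac14+\frac n2-\frac m2$; the "$+\frac14$" is the generic decay $t^{-1/4}$ of the heat semigroup in $L^2$ on the line, the "$+\frac n2$" is the gain from $n$ vanishing moments, and the "$-\frac m2$" is the loss from asking for $m$ weights (equivalently $m$ $\zeta$-derivatives), and one has to see these combine additively in the worst Leibniz term. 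I would also need the elementary lemma that $\sup_{\zeta}|\zeta|^a e^{-t\zeta^2} \lesssim t^{-a/2}$ and its $L^2$ analogue $\||\zeta|^a e^{-t\zeta^2}\|_{L^2(d\zeta)}\lesssim t^{-a/2-1/4}$, which is where the ubiquitous $\frac14$ is born. Apart from this, the argument is a routine, if slightly lengthy, Fourier computation, and I do not expect conceptual difficulties beyond organizing the estimate so the logarithm appears only where unavoidable.
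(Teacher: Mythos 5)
Your strategy coincides with the paper's: pass to the Fourier side, write $\hat f(t,\zeta)=e^{-t\zeta^2}\hat f_0(\zeta)$, translate the moment conditions into $\partial_\zeta^k\hat f_0(0)=0$ for $k<n$ and hence the Taylor bound $|\partial_\zeta^i\hat f_0(\zeta)|\lesssim|\zeta|^{n-i}\,\|\partial_\zeta^n\hat f_0\|_{L^\infty}\lesssim|\zeta|^{n-i}\,|f_0|_{H^{0,n+1}}$ (valid for all $\zeta$ and all $i\le n$), expand $\partial_\zeta^j\hat f$ by Leibniz, and split into low and high frequencies. The one genuine difference is your cutting threshold. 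You split at $|\zeta|=(2+t)^{-1/2}$; the paper splits at $\zeta^*(t)=|\rho\ln(2+t)/(2+t)|^{1/2}$. That extra $\ln$ in the paper's threshold is exactly what produces the logarithm in~\eqref{eq.lemma.f}: on low frequencies the paper discards $e^{-2t\zeta^2}|P(t\zeta^2)|^2\le C$ and integrates $|\zeta|^{2(n-j)}$ up to $\zeta^*$, picking up $(\zeta^*)^{2(n-j)+1}$, while on high frequencies the sharp Gaussian cutoff alone gives $e^{-t(\zeta^*)^2}\le(2+t)^{-\rho/2}$, which can be pushed arbitrarily fast. Your cut is cleaner, and if carried through it actually yields the \emph{stronger}, log-free bound $|f(t)|_{H^{s,m}}\lesssim(2+t)^{-1/4-(n-m)/2}|f_0|_{H^{s,n+1}}$ (which of course implies~\eqref{eq.lemma.f}); once the Taylor bound is used on all of $\R$, the substitution $\eta=\sqrt t\,\zeta$ gives $\int(1+\zeta^2)^s|\partial_\zeta^j\hat f|^2\,d\zeta\lesssim t^{-(n-j)-1/2}|f_0|^2_{H^{0,n+1}}$ directly, with no split needed at all.

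Two remarks on your exposition, one of which matters. First, your parenthetical ``use the moment-vanishing only when it genuinely helps (low frequencies, $i\le n$)'' is off and would create a real gap if taken literally: at the cut $|\zeta|=(2+t)^{-1/2}$ the Gaussian $e^{-2t\zeta^2}$ is merely $O(1)$, so on the high-frequency side near the threshold you cannot beat the polynomial factors $|Q_{j-i}(t,\zeta)|^2\sim t^{j-i}$ by the Gaussian alone and the $L^2$ norm of $\partial_\zeta^i\hat f_0$; the worst term $i=0$, $j=m$ would then produce $t^m\,|f_0|^2$, which grows. With your threshold the Taylor estimate from the moments is indispensable on \emph{both} sides (it is global, and $i\le j\le m\le n$ keeps it in range). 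The paper avoids this issue by choosing $\zeta^*$ so that the exponential cutoff alone decays, at the price of the logarithm. Second, your claim that the logarithm ``arises exactly in the borderline case where the low-frequency polynomial power matches the integration scale'' is not right: with your threshold and the global Taylor bound, there is no borderline case and no log at all; the exponents $n-j$ are nonnegative integers so the low-frequency integral $\int_{|\zeta|\le(2+t)^{-1/2}}|\zeta|^{2(n-j)}\,d\zeta$ is always a clean power. The logarithm in the statement is an artifact of the paper's (deliberate, simplifying) choice to bound $e^{-2t\zeta^2}|P(t\zeta^2)|^2$ by a constant rather than keeping the Gaussian decay.
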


\begin{proof}
For small times (say $t \leq 2$), the \fm{$t$ function in the} right-hand side 
of~\eqref{eq.lemma.f} is
bounded below by a \fm{positive} constant. Thus, inequality~\eqref{eq.lemma.f} 
\fm{holds 
because} the considered energy decays under the heat equation. Let us move on 
to 
large times, e.g. assuming $t \geq 2$. Using Fourier transform in 
$z \mapsto \fourierz$, we compute:
\begin{equation} \label{eq.f.fourier}
  \hat{f}(t,\fourierz) = e^{-t\fourierz^2} \hat{f_0}(\fourierz).
\end{equation}
Moreover, from Plancherel's equality, we have the following estimate:
\begin{equation} \label{eq.f.h2m}
  \left| f(t,\cdot) \right|^2_{H^{s,m}}
  \lesssim
  \sum_{j = 0}^m \int_\R (1+\fourierz^2)^s 
  \left|\partial_\fourierz^j \hat{f}(t, \fourierz)\right|^2 \dif \fourierz.
\end{equation}
We use~\eqref{eq.f.fourier} to compute the derivatives of the Fourier 
transform:
\begin{equation} \label{eq.f.fourier.dj}
  \partial_\fourierz^j \hat{f}(t\fm{,\fourierz})
  = 
  \sum_{i = 0}^j \fourierz^{i-j} P_{i,j}\left(t\fourierz^2\right) 
  e^{-t\fourierz^2} \partial_\fourierz^i \hat{f_0}(\fourierz),
\end{equation}
where $P_{i,j}$ are polynomials with constant numerical coefficients. The 
energy contained at high frequencies decays very fast. For low frequencies,
we will need to use assumptions~\eqref{eq.f.moment}. Writing a Taylor expansion
of $\hat{f}_0$ near $\fourierz = 0$ and taking into account these assumptions 
yields the estimates:
\begin{equation} \label{eq.f.fourier.di.small}
 \left|\partial_\fourierz^i \hat{f}_0(\fourierz)\right| 
 \lesssim |\fourierz|^{n-i} \left|\partial_\fourierz^n\hat{f}_0\right|_{L^\infty}
 \lesssim |\fourierz|^{n-i} \left|z^n f_0(z)\right|_{L^1}
 \lesssim |\fourierz|^{n-i} \left|f_0\right|_{H^{0,n+1}}.
\end{equation}
We introduce $\rho > 0$ and we split the energy integral at a cutting threshold:
\begin{equation} \label{eq.zeta*}
  \fourierz^*(t) := \left|\frac{\rho \ln (2+t)}{2+t}\right|^{1/2}. 
\end{equation}
\textbf{High frequencies.} We start with high frequencies 
$|\fourierz| \geq \fourierz^*(t)$. For large times, this range actually almost includes 
the whole spectrum. Using~\eqref{eq.f.h2m} 
and~\eqref{eq.f.fourier.dj} we compute the high energy terms:
\begin{equation} \label{eq.W.h}
  \mathcal{W}^\sharp_{j,i,i'}(t)
  := \int_{|\fourierz|\geq \fourierz^*(t)}
	\left(1+\fourierz^2\right)^s e^{-2t\fourierz^2} 
	\left|\fourierz\right|^{i-j}
	\left|\fourierz\right|^{i'-j}
	\left|P_{i,j}\left(t\fourierz^2\right)P_{i',j}\left(t\fourierz^2\right)\right|
	\left|\partial_\fourierz^i \hat{f}_0\right|
	\left|\partial_\fourierz^{i'} \hat{f}_0\right| \dif \fourierz.
\end{equation}
Plugging estimate~\eqref{eq.f.fourier.di.small} into~\eqref{eq.W.h} yields:
\begin{equation} \label{eq.W.h2}
 \mathcal{W}^\sharp_{j,i,i'}(t)
 \leq 
 \left|f_0\right|_{H^{0,n+1}}^2 
 \frac{e^{-t(\fourierz^*(t))^2}}{|t|^{n-j+\frac{1}{2}}}
 \int_{\R}
 \left(1+\fourierz^2\right)^s e^{-t\fourierz^2} 
 \left|t\fourierz^{\fm{2}}\right|^{n-j}
 \left|P_{i,j}\left(t\fourierz^2\right)P_{i',j}\left(t\fourierz^2\right)\right|
 t^{\frac{1}{2}} \dif \fourierz.
\end{equation}
The integral in~\eqref{eq.W.h2} is bounded from above for $t \geq 2$ through an
easy change of variable. Moreover, 
\begin{equation} \label{eq.W.h3}
 e^{-t(\fourierz^*(t))^2} = e^{-\frac{\rho t}{2+t} \ln(2+t)}
 = \left(2+t\right)^{-\frac{\rho t}{2+t}}
 \leq \left(2+t\right)^{-\frac{\rho}{2}}.
\end{equation}
Hence, for $t \geq 2$, combining~\eqref{eq.W.h2} and~\eqref{eq.W.h3} yields:
\begin{equation} \label{eq.W.h4}
 \mathcal{W}^\sharp_{j,i,i'}(t)
 \lesssim \left(2+t\right)^{-\frac{\rho}{2}} 
 \left|f_0\right|_{H^{0,n+1}}^2.
\end{equation}
In~\eqref{eq.zeta*}, we can choose any $\rho > 0$. Hence, the decay obtained 
in~\eqref{eq.W.h4} can be arbitrarily good. This is not the case for the low
frequencies estimates which are capped by the number of vanishing moments 
assumed on the initial data $f_0$.

\bigskip
\noindent
\textbf{Low frequencies}. We move on to low frequencies $|\fourierz| \leq 
\fourierz^*(t)$. For large times, this range concentrates near zero. 
Using~\eqref{eq.f.h2m} and~\eqref{eq.f.fourier.dj} we compute the low energy 
terms:
\begin{equation} \label{eq.W.l}
 \mathcal{W}^\flat_{j,i,i'}(t) := \int_{|\fourierz|\leq \fourierz^*(t)}
 \left(1+\fourierz^2\right)^s e^{-2t\fourierz^2} 
 \left|\fourierz\right|^{i-j}
 \left|\fourierz\right|^{i'-j}
 \left|P_{i,j}\left(t\fourierz^2\right)P_{i',j}\left(t\fourierz^2\right)\right|
 \left|\partial_\fourierz^i \hat{f}_0\right|
 \left|\partial_\fourierz^{i'} \hat{f}_0\right| \dif \fourierz.
\end{equation}
Plugging estimate~\eqref{eq.f.fourier.di.small} into~\eqref{eq.W.l} yields:
\begin{equation} \label{eq.W.l2}
 \mathcal{W}^\flat_{j,i,i'}(t)
 \leq 
 \left|f_0\right|_{H^{0,n+1}}^2 
 \int_{|\fourierz|\leq \fourierz^*(t)}
 \left(1+\fourierz^2\right)^s
 \left|\fourierz\right|^{2n-2j}
 \left|P_{i,j}\left(t\fourierz^2\right)P_{i',j}\left(t\fourierz^2\right)\right|
 e^{-2t\fourierz^2}
 \dif \fourierz.
\end{equation}
The function $\fm{\tau \mapsto \left|P_{i,j}(\tau) P_{i',j}(\tau)\right| 
e^{-2\tau}}$ is bounded 
on $\fm{[0,+\infty)}$ thanks to the growth comparison theorem.
\fm{Moreover, $(1+\fourierz^2)^s$ can be bounded by $(1+\rho)^s$ for 
    $|\fourierz|\leq|\fourierz^*(t)|$.}
 Hence, plugging 
the 
definition~\eqref{eq.zeta*} into~\eqref{eq.W.l2} yields:
\begin{equation} \label{eq.W.l3}
 \mathcal{W}^\flat_{j,i,i'}(t)
 \lesssim 
 \left|f_0\right|_{H^{0,n+1}}^2 
 \left|\frac{\rho \ln (2+t)}{2+t}\right|^{\frac{1}{2}+n-j}.
\end{equation}
Hence, choosing $\rho = 1 + 2n - 2m$ in equation~\eqref{eq.zeta*} and summing 
estimates~\eqref{eq.W.h4} with~\eqref{eq.W.l3} for all indexes $0 \leq i, i' 
\leq j \leq m$ concludes the proof of~\eqref{eq.lemma.f} and 
Lemma~\ref{lemma.f}.
\end{proof}

We will use the conclusion of Lemma~\ref{lemma.f} for two different purposes.
First, it states that the boundary layer residue is small at the final time.
Second, estimate~\eqref{eq.lemma.f} can also be used to prove that the source 
terms generated by the boundary layer in the equation of the remainder are 
integrable in large time. Indeed, for $n \geq 2$, $f_0$ and $f$ satisfying the 
assumptions of Lemma~\ref{lemma.f}, we have:
\begin{equation} \label{eq.lemma.f.l1}
 \left\| f \right\|_{L^1(H^{2,n-2})} 
 \lesssim
 \left| f_0 \right|_{H^{2,n+1}}.
\end{equation}

% ==============================================================================
\subsection{Preparation of vanishing moments for the boundary layer profile}

In this paragraph, we explain how we intend to prepare vanishing moments for 
the boundary layer profile at time~$T$ using the control term $\force^v$ of 
equation~\eqref{eq.v}. In order to perform computations within the Fourier 
space in the fast variable, we want to get rid of the Neumann boundary 
condition at $z = 0$. This can be done by lifting the inhomogeneous boundary 
condition $g^0$ to turn it into a source term. We choose the simple lifting 
$-g^0(t,x) e^{-z}$. The homogeneous boundary condition will be preserved via
an even extension of the source term. Let us introduce $V(t,x,z) \in \R^d$ 
defined for $t \geq 0$, $x\in \bar{\Omext}$ and $z \in \R$ \fm{by}:
\begin{equation}
 \fm{V(t,x,z) := v(t,x,|z|) + g^0(t,x) e^{-|z|}}.
\end{equation}
\fm{We also extend implicitly $\force^v$ by parity. Hence, $V$ is the solution
to the following evolution equation:
\begin{equation} \label{eq.V}
 \left\{
 \begin{aligned}
 \partial_t V + (u^0\cdot\nabla)V + BV 
 + u^0_\flat z \partial_z V - \partial_{zz} V 
 & = G^0 e^{-|z|} + \tilde{G}^0 |z| e^{-|z|} + \force^v
 &&\quad \textrm{in } \R_+ \times \bar{\Omext} \times \R_+, \\
 V(0,x,z) & = 0
 &&\quad \textrm{in } \bar{\Omext} \times \R_+,
 \end{aligned}
 \right.
\end{equation}}
where we introduce:
\begin{align}
 \label{eq.def.B}
 B_{i,j} & \fm{:=} \partial_j u^0_i - \left(n \cdot \partial_j u^0\right) n_i
 \fm{+ (u^0 \cdot \nabla n_j) n_i}
  \quad \textrm{for } 1 \leq i, j \leq d, \\
 \label{eq.def.G0} 
 G^0 & \fm{:=} \partial_t g^0 - g^0 + (u^0\cdot\nabla)g^0 + Bg^0, \\
 \label{eq.def.G1}
 \tilde{G}^0 & \fm{:=} - u^0_\flat g^0.
\end{align}
\fm{The null initial condition in~\eqref{eq.V} is due to the fact that 
$u^0(0,\cdot) = 0$ and hence $g^0(0,\cdot) = 0$. Similarly, we have 
$g^0(t,\cdot) = 0$ for $t\geq T$ since we extended $u^0$ by zero after $T$.
    As remarked for equation~\eqref{eq.v}, equation~\eqref{eq.V} also preserves
orthogonality with $n$. Indeed, the particular structure of the zeroth-order 
operator~$B$ 
is such that $\left[(u^0\cdot\nabla)V + BV\right]\cdot n = 0$ for any function 
$V$ such that $V \cdot n = 0$.} 
We compute the partial Fourier transform 
$\hat{V}(t,x,\fourierz) := \int_\R V(t,x,z) e^{-i\fourierz z} \dz$. We obtain:
\begin{equation} \label{eq.vhat.evol}
 \partial_t \hat{V} + (u^0\cdot\fm{\nabla})\hat{V} + 
 \left(B + \fourierz^2 - u^0_\flat\right) \hat{V} 
 \fm{-} u^0_\flat \fourierz \partial_\fourierz \hat{V} 
 = \frac{2G^0}{1+\fourierz^2} + \frac{2 
 \tilde{G}^0(\fm{1-\fourierz^2})}{(1+\fourierz^2)^2}
 + \hat{\force}^v.
\end{equation}
To obtain the decay we are seeking, we will need to consider a finite number of 
derivatives of $\hat{V}$ at $\fourierz = 0$. Thus, we introduce:
\begin{equation} \label{eq.def.q} 
	Q_k(t,x):=\partial_\fourierz^k \hat{V}(t,x,\fourierz = 0).
\end{equation} 
Let us compute the evolution equations satisfied by these quantities. 
Indeed, differentiating equation~\eqref{eq.vhat.evol} $k$ times with 
respect to $\fourierz$ yields:
\begin{equation}
 \begin{split}
 \partial_t \partial_\fourierz^k\hat{V} + (u^0\cdot\nabla) \partial_\fourierz^k 
 \hat{V}+ 
 & \left(B + \fourierz^2 - u^0_\flat\right) \partial_\fourierz^k \hat{V} 
 \fm{+} 2 k\fourierz \partial_\fourierz^{k-1} \hat{V} 
 \fm{+} k(k-1) \partial_\fourierz^{k-2} \hat{V} 
 \fm{-} u^0_\flat \fm{(}\fourierz \partial_\fourierz + k) 
 \partial_\fourierz^{k} 
 \hat{V} \\
 & = \partial^k_\fourierz \left[
 \frac{2G^0}{1+\fourierz^2} + \frac{2 
 \tilde{G}^0(\fm{1-\fourierz^2})}{(1+\fourierz^2)^2}
 + \hat{\force}^v
 \right].
 \end{split}
\end{equation}
Now we can evaluate at $\fourierz = 0$ and obtain:
\begin{equation} \label{eq.qk}
 \partial_t Q_k + (u^0\cdot\nabla)Q_k + \fm{BQ_k}
  \fm{-u^0_\flat (k+1)}  Q_k = 
 \left.
 \partial^k_\fourierz
  \left[
   \frac{2G^0}{1+\fourierz^2} + \frac{2 
   \tilde{G}^0(\fm{1-\fourierz^2})}{(1+\fourierz^2)^2}
   + \hat{\force}^v
  \right]
 \right\rvert_{\fourierz = 0}
 \fm{- k(k-1) Q_{k-2}}.
\end{equation}
In particular:
\begin{align}
 \label{eq.q0}
 \partial_t Q_0 + (u^0\cdot\nabla)Q_0 + \fm{BQ_0 - u^0_\flat Q_0}
 & = 2G^0 \fm{+} 2\tilde{G}^0 + 
 \left[\hat{\force}^v\right]_{\fourierz = 0} \\
 \label{eq.q2}
 \partial_t Q_2 + (u^0\cdot\nabla)Q_2 + \fm{BQ_2 - 3 u^0_\flat Q_2}
 & = \fm{-}2Q_0 - \fm{4}G^0 - \fm{12}\tilde{G}^0 + 
 \left[\partial^2_\fourierz \hat{\force}^v\right]_{\fourierz = 0}.
\end{align}
These equations can be brought back to ODEs using the characteristics method, by
following the flow~$\Phi^0$. Moreover, thanks to their cascade structure, it is
easy to build a source term $\force^v$ which prepares vanishing moments. We have 
the following result:
\begin{lemma} \label{lemma.vanishing}
 Let $n \geq 1$ and $u^0 \in \mathcal{C}^\infty([0,T] \times \bar{\Omext})$
 be a fixed reference flow as defined in paragraph~\ref{par.euler}. There 
 exists $\force^v \in \mathcal{C}^\infty(\fm{\R_+ \times \bar{\Omext} \times 
 \R_+})$ 
 \fm{with $\force^v \cdot n = 0$, whose $x$ support is in 
 $\bar{\Omext}\setminus\bar{\Omega}$, whose time support is compact in
 $(0,T)$,} such that:
 \begin{equation} \label{eq.moments}
  \forall 0 \leq k < n,
  \forall x \in \bar{\Omext},
  \quad
  Q_k(T,x) = 0.
 \end{equation}
 Moreover, for any $s,p \in \N$, for any $0 \leq m \leq n$,
 the associated boundary layer profile satisfies:
 \begin{equation} \label{eq.v.estimate}
   \left|v(t,\cdot,\cdot)\right|_{H^p_x(H^{s,m}_z)}
   \lesssim \left| \frac{\ln(2+t)}{2+t} \right|^{\frac{1}{4}+\frac{n}{2}-
   	\frac{m}{2}},
 \end{equation}
 where the hidden constant depends on the functional space and on $u^0$ but 
 not on the time $t \geq 0$.
\end{lemma}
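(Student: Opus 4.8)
The plan is to exploit the triangular (cascade) structure of system~\eqref{eq.qk} together with the flushing property~\eqref{eq.flow.out} of the reference flow $u^0$ in order to choose $\force^v$ killing the relevant moments, and then to feed the outcome into Lemma~\ref{lemma.f}. First a parity reduction: since $\force^v$ is extended evenly in $z$, the source terms of~\eqref{eq.V} are even in $z$ and $V(0,\cdot)=0$, the solution $V$ stays even in $z$, hence $\hat V(t,x,\cdot)$ is even in $\fourierz$ and $Q_k\equiv0$ for every odd $k$. It therefore suffices to guarantee~\eqref{eq.moments} for the even indices $k=2i$ with $2i<n$, i.e. for $0\le i\le K:=\lceil n/2\rceil-1$. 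We look for $\force^v(t,x,z)=\sum_{i=0}^{K}a_i(t,x)\,h_i(z)$, where $h_0,\dots,h_K$ is a fixed family of even Schwartz functions biorthogonal to the even monomials, $\int_\R z^{2j}h_i(z)\,\dz=\delta_{ij}$ for $0\le i,j\le K$ (obtained from $z^{2l}e^{-z^2}$, $0\le l\le K$, by inverting the associated nonsingular moment matrix), and the $a_i\colon[0,T]\times\bar{\Omext}\to\R^d$ are smooth, satisfy $a_i\cdot n=0$, have $x$-support inside $\bar{\Omext}\setminus\bar{\Omega}$ and compact $t$-support in $(0,T)$. With this ansatz $\force^v$ automatically inherits $\force^v\cdot n=0$, the $z$-parity and the prescribed supports, and $\bigl[\partial_\fourierz^{2i}\hat\force^v\bigr]_{\fourierz=0}=(-1)^i a_i$ depends only on $a_i$.

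For the even indices the equations~\eqref{eq.qk} form a triangular system: that for $Q_{2i}$ involves only $Q_{2i}$, the control term $(-1)^i a_i$, and an explicit source $S_i$ built from $G^0$, $\tilde G^0$ and the lower-order unknown $Q_{2i-2}$. We determine $a_0,\dots,a_K$ one after the other. Fix $i$, assume $Q_0,\dots,Q_{2i-2}$ (hence $S_i$) already known; as for~\eqref{eq.V}, the structure of $B$ and $u^0_\flat$ keeps all these tangential, i.e. $\perp n$. Along a characteristic $t\mapsto Y_a(t):=\Phi^0(0,t,a)$ the function $q(t):=Q_{2i}(t,Y_a(t))$ solves a linear ODE $\dot q=[(2i+1)u^0_\flat I-B]\,q+S_i+(-1)^i a_i$ with $q(0)=0$, so writing $R_i^a(t,s)$ for the associated resolvent (smooth in $(t,s,a)$, and carrying $\perp n(Y_a(s))$ onto $\perp n(Y_a(t))$), Duhamel's formula rewrites~\eqref{eq.moments} for $k=2i$ as
\[
 \int_0^T R_i^a(T,s)\,a_i\bigl(s,Y_a(s)\bigr)\,\ds \;=\; \Theta_i(a)\;:=\;(-1)^{i+1}\int_0^T R_i^a(T,s)\,S_i(s)\,\ds ,
\]
the right-hand side being a smooth function of $a\in\bar{\Omext}$ with $\Theta_i(a)\perp n(Y_a(T))$.

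It remains to realize this by an admissible $a_i$, and this is where~\eqref{eq.flow.out} is used. For each $a\in\bar{\Omext}$ pick $t_a\in(0,T)$ with $Y_a(t_a)\notin\bar{\Omega}$; since $\bar{\Omega}$ is closed and the flow continuous, there are a (relative) neighbourhood $U_a$ of $a$ in $\bar{\Omext}$ and an interval $[\alpha_a,\beta_a]\Subset(0,T)$ so that $Y_{a'}(s)\in\bar{\Omext}\setminus\bar{\Omega}$ for all $a'\in U_a$, $s\in[\alpha_a,\beta_a]$. By compactness extract a finite subcover $U_1,\dots,U_L$ with intervals $[\alpha_l,\beta_l]$, take a subordinate partition of unity $\{\rho_l\}$ and bumps $\eta_l\in\mathcal{C}^\infty_c((\alpha_l,\beta_l))$ with $\int\eta_l=1$, and set, with the backward flow $\psi(s,x):=\Phi^0(s,0,x)$,
\[
 a_i(s,x):=\sum_{l=1}^L \eta_l(s)\,R_i^{\psi(s,x)}(s,T)\,\rho_l\bigl(\psi(s,x)\bigr)\,\Theta_i\bigl(\psi(s,x)\bigr).
\]
This is smooth; on the support of its $l$-th term $s\in(\alpha_l,\beta_l)$ and $\psi(s,x)\in U_l$, hence $x=Y_{\psi(s,x)}(s)\in\bar{\Omext}\setminus\bar{\Omega}$, which gives the required supports; and since $\Theta_i$ is $\perp n(Y_a(T))$ and $R_i^a(s,T)$ carries this onto $\perp n(Y_a(s))=\perp n(x)$, one gets $a_i\cdot n=0$. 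Evaluating along $Y_a$ and using $R_i^a(T,s)R_i^a(s,T)=\mathrm{Id}$, $\int\eta_l=1$ and $\sum_l\rho_l\equiv1$ gives $\int_0^T R_i^a(T,s)a_i(s,Y_a(s))\,\ds=\sum_l\rho_l(a)\Theta_i(a)=\Theta_i(a)$, hence $q(T)=0$, i.e. $Q_{2i}(T,\cdot)\equiv0$; this also keeps $Q_{2i}\cdot n\equiv0$, closing the induction and establishing~\eqref{eq.moments}.

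Finally~\eqref{eq.v.estimate}. Linear parabolic energy estimates for~\eqref{eq.V} on the fixed interval $[0,T]$ — smooth coefficients, the term $u^0_\flat z\partial_z V$ absorbed after integrating by parts against the weights $z^{2m}V$, and source terms smooth in $(t,x)$ and Schwartz in $z$ — bound $x\mapsto\bar v(x,\cdot)=v(T,x,\cdot)=V(T,x,\cdot)$ (recall $g^0(T,\cdot)=0$ because $u^0$ is supported in $(0,T)$) in $H^p_x(H^{s,n+1}_z)$ for all $p,s$, with a bound depending only on $u^0$. By~\eqref{eq.moments} and the parity reduction the even function $\bar v(x,\cdot)$ has vanishing moments up to order $n-1$; for $t\ge T$ it evolves by the pure heat equation~\eqref{eq.v.after}, so Lemma~\ref{lemma.f} (with $x$ a parameter, then taking $H^p_x$) yields, for $t\ge T$,
\[
 \left|v(t,\cdot,\cdot)\right|_{H^p_x(H^{s,m}_z)}\lesssim\left|\frac{\ln(2+(t-T))}{2+(t-T)}\right|^{\frac14+\frac n2-\frac m2},
\]
and since $\frac{\ln(2+t)}{2+t}\lesssim\frac{\ln(2+(t-T))}{2+(t-T)}$ for $t\ge T$ (with constant depending only on $T$), this is~\eqref{eq.v.estimate}; for $0\le t\le T$ the right-hand side of~\eqref{eq.v.estimate} stays bounded below by a positive constant, so there it is merely the boundedness of $v$ on $[0,T]$. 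The delicate point of the whole argument is the characteristics construction — producing genuinely smooth controls $a_i$ with the prescribed supports via the covering / partition-of-unity argument from~\eqref{eq.flow.out}, while checking at each step that tangentiality ($\cdot\,n=0$) is preserved, which is exactly what keeps~\eqref{eq.V} linear and forces $\force^v\cdot n=0$.
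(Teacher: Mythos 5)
Your proof is correct and follows essentially the same route as the paper's: parity reduction to the even moments, a biorthogonal ansatz $\force^v=\sum_i a_i(t,x)h_i(z)$ so that each $Q_{2i}$ is controlled independently through the cascade structure of~\eqref{eq.qk}, a characteristics/Duhamel argument combined with a partition-of-unity covering of $\bar{\Omext}$ exploiting the flushing property~\eqref{eq.flow.out} to produce smooth tangential controls supported in $\bar{\Omext}\setminus\bar{\Omega}$ with compact time support, and finally Lemma~\ref{lemma.f} applied to the even extension of $v(T,\cdot)$. You make the Duhamel/resolvent step more explicit than the paper (which instead splits the data as $\sum_l\eta_l Q_*$, solves the free equation, multiplies by a time cut-off $\beta(t-t_l)$ and reads the control off the resulting source), but the two constructions are equivalent; the only slip is that in the last comparison you want $\frac{\ln(2+(t-T))}{2+(t-T)}\lesssim\frac{\ln(2+t)}{2+t}$ on $[T,\infty)$ rather than the reverse, though in fact both hold since the two quantities are comparable there.
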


\begin{proof}
 \textbf{Reduction to independent control of $n$ ODEs.}
 Once $n$ is fixed, let $n' = \lfloor (n-1)/2 \rfloor$. We start by choosing  
 smooth even functions of $z$, $\phi_j$ for $0 \leq j \leq n'$, such that 
 $\partial_\fourierz^{2k} \hat{\phi_j}(0) = \delta_{jk}$. We 
 then compute iteratively the moments $Q_{2j}$ (odd moments automatically 
 vanish by parity) using $\force^v_j := \force^v_j(t,x) \phi_j(z)$ to 
 control $Q_{2j}$ without interfering with previously constructed controls. 
 When computing the control at order $j$, all lower order moments $0 \leq i
 < j$ are known and their contribution as the one of~$Q_0$ in~\eqref{eq.q2}
 can be seen as a known source term.
 
 \textbf{Reduction to a null controllability problem.}
 Let us explain why~\eqref{eq.q0} is controllable. First, by linearity and since
 the source terms $G^0$ and $\tilde{G}^0$ are already known\fm{, fixed and
     tangential}, it 
 suffices to prove that, \fm{starting from zero and without these source terms, 
 we could reach any smooth tangential state.}
 Moreover, since the flow flushing property~\eqref{eq.flow.out} is 
 invariant through time reversal, it is also sufficient to prove that\fm{, in 
 the absence of source term, we can drive any smooth tangential initial state 
 to zero.} These arguments can 
 also be formalized using a Duhamel formula following the flow for 
 equation~\eqref{eq.q0}.
 
 \textbf{Null controllability for a toy system.}
 We are thus left with proving a null controllability property for the following
 toy system:
 \begin{equation} \label{eq.toy.q}
  \left\{
   \begin{aligned}
    \partial_t \fm{Q} + (u^0 \cdot \nabla) \fm{Q} + \fm{B Q + \lambda Q} & = \xi
    && \textrm{in } (0,T) \times \fm{\bar{\Omext}}, \\
    \fm{Q}(0,\cdot) & = \fm{Q}_*
    && \textrm{in } \fm{\bar{\Omext}}, \\
   \end{aligned}
  \right.
 \end{equation}
 \fm{where $\fm{B}(t,x)$ is defined in~\eqref{eq.def.B} and $\lambda(t,x)$ is a 
 smooth scalar-valued} amplification term. Thanks to the
 flushing property~\eqref{eq.flow.out} and to the fact that $\fm{\bar{\Omext}}$
 is bounded, we can choose a finite partition of unity described by functions
 $\eta_l$ for $1 \leq l \leq L$ with $0 \leq \eta_l(x) \leq 1$  and 
 $\sum_l \eta_l \equiv 1$ on $\fm{\bar{\Omext}}$, where the support of $\eta_l$ 
 is a small ball $B_l$ centered at some $x_l \in \fm{\bar{\Omext}}$. Moreover, 
 we extract our partition such that: for any $1 \leq l \leq L$, there exists a
 time $t_l \in (\epsilon,T-\epsilon)$ such that $\mathrm{dist}(\Phi^0(0,t,B_l), 
 \bar{\Omega}) 
 \geq \delta / 2$ for $|t-t_l| \leq \epsilon$ where $\epsilon > 0$. Let 
 $\beta : \R \rightarrow \R$ be a smooth function 
 \fm{with $\beta = 1$ on $(-\infty,-\epsilon)$ and $\beta = 0$ on
 $(\epsilon,+\infty)$}. Let $\fm{Q}^l$ be the solution 
 to~\eqref{eq.toy.q} with initial data $\fm{Q}_*^l := \eta_l \fm{Q}_*$ and null 
 source term $\xi$. We define:
 \begin{align} \label{eq.toy.q2}
  \fm{Q}(t,x) & := \sum_{l = 1}^L \fm{\beta(t-t_l)} \fm{Q}^l(t,x), \\
  \label{eq.toy.q3}
  \fm{\force(t,x)} & \fm{:= \sum_{l = 1}^L \beta'(t-t_l) Q^l(t,x)}.
 \end{align}
 \fm{Thanks to the construction, formulas~\eqref{eq.toy.q2} 
 and~\eqref{eq.toy.q3} define a solution to~\eqref{eq.toy.q} with a smooth 
 control term~$\force$ supported in $\bar{\Omext}\setminus\bar{\Omega}$,
 satisfying $\force \cdot n = 0$ and such that $Q(T,\cdot) = 0$.}
 
 \textbf{Decay estimate.}
 For small times $t \in (0,T)$, when $\force^v \neq 0$, 
 estimate~\eqref{eq.v.estimate} can be seen as a uniform in time estimate and
 can be obtained similarly as the well-posedness results proved 
 in~\cite{MR2754340}. For large times, $t \geq T$, the boundary layer profile
 equation boils down to the parametrized heat equation~\eqref{eq.v.after}
 and we use the conclusion of Lemma~\ref{lemma.f} to deduce~\eqref{eq.v.estimate}
 from~\eqref{eq.lemma.f}.
\end{proof}

% ==============================================================================
\subsection{Staying in a small neighborhood of the boundary}
\label{par.localize}

The boundary layer correction defined in~\eqref{eq.v} is supported within a 
small $x$-neighborhood of~$\BorderExt$. This is legitimate because Navier 
boundary layers don't exhibit separation behaviors. Within this 
$x$-neighborhood, this correction lifts the tangential boundary layer residue 
created by the Euler flow but generates a non vanishing divergence at order
$\sqrt{\varepsilon}$. In the sequel, we will need to find a lifting profile for 
this residual divergence (see~\eqref{eq.def.w}). This will be possible as long 
as the extension $n(x) := - \nabla \varphi(x)$ of the exterior normal to 
$\BorderExt$ does not vanish on the $x$-support of $v$. However, there exists
at least one point in $\Omext$ where $\nabla \varphi = 0$ because $\varphi$
is a non identically vanishing smooth function with $\varphi = 0$ on 
$\BorderExt$. Hence, we must make sure that, despite the transport term present 
in equation~\eqref{eq.v}, the $x$-support of~$v$ will not encounter points where
$\nabla\varphi$ vanishes.

We consider the extended domain~$\Omext$. Its boundary coincides with the
set $\{ x \in \R^d ; \enskip \varphi(x) = 0 \}$. For any $\delta \geq 0$, we
define $\mathcal{V}_\delta := \{ x \in \R^d ; \enskip 0 \leq \varphi(x) \leq 
\delta \}$. Hence, $\mathcal{V}_\delta$ is a neighborhood of~$\BorderExt$
\fm{in $\bar{\Omext}$}. 
For~$\delta$ large enough, $\mathcal{V}_\delta = \fm{\bar{\Omext}}$. As
mentioned in paragraph~\ref{par.def.v}, $\varphi$ was chosen such that 
$|\nabla\varphi| = 1$ and $\fm{|\varphi(x)|} = 
\mathrm{dist}(x,\partial{\Omext})$ 
in a neighborhood of~$\partial{\Omext}$. Let us introduce $\eta > 0$ such 
that this is true on~$\mathcal{V}_\eta$. Hence, within this neighborhood
of~$\partial{\Omext}$, the extension $n(x) = - \nabla \varphi(x)$ of the 
outwards normal to~$\partial{\Omext}$ is well defined (and of unit norm).
We want to guarantee that $v$ vanishes outside of $\mathcal{V}_\eta$.

Considering the evolution equation~\eqref{eq.vhat.evol}, we see it as an 
equation defined on the whole of $\Omext$. Thanks to its structure,
we see that the support of~$\hat{V}$ is transported by the flow of $u^0$.
Moreover, $\hat{V}$ can be triggered either by fixed polluting right-hand side 
source term or by the control forcing term. We want to determine the supports 
of these sources such that $\hat{V}$ vanishes outside of $\mathcal{V}_\eta$.

Thanks to definitions~\eqref{eq.def.g0},~\eqref{eq.def.G0} 
and~\eqref{eq.def.G1}, the unwanted right-hand side source term 
of~\eqref{eq.vhat.evol} is supported within the support of~$\chi$. We
introduce $\eta_\chi$ such that $\supp (\chi) \subset \mathcal{V}_{\eta_\chi}$. 
For $\delta \geq 0$, we define:
\begin{equation} \label{eq.def.S}
 S(\delta) := \sup \left\{ 
 \varphi\left(\Phi^0(t, t', x)\right); 
 \enskip t, t'\in[0,T], 
 \enskip x \in \mathcal{V}_\delta
 \right\} \geq \delta.
\end{equation}
With this notation, $\eta_\chi$ includes the zone where pollution might be 
emitted. Hence $S(\eta_\chi)$ includes the zone that might be reached by some
pollution. Iterating once more, $S(S(\eta_\chi))$ includes the zone where we
might want to act using $\fm{\force^v}$ to prepare vanishing moments. 
Eventually, 
$S(S(S(\eta_\chi))))$ corresponds to the maximum localization of non vanishing
values for $v$.

First, since $u^0$ is smooth, $\Phi^0$ is smooth. Moreover, $\varphi$ is smooth.
Hence,~\eqref{eq.def.S} defines a smooth function of~$\delta$. Second, due to 
the condition $u^0\cdot n= 0$, the characteristics cannot leave or enter the 
domain and thus follow the boundaries. Hence, $S(0) = 0$.  Therefore, by 
continuity of $S$, there exists $\eta_\chi > 0$ small enough such that 
$S(S(S(\eta_\chi)))) \leq \eta$. We assume $\chi$ is fixed from now on.

% ==============================================================================
\subsection{Controlling the boundary layer exactly to zero}

In view of what has been proved in the previous paragraphs, a natural question 
is whether we could have controlled the boundary layer exactly to zero (instead 
of controlling only a finite number of modes and relying on self-dissipation of 
the higher order ones). This was indeed our initial approach but it turned out 
to be impossible. The boundary layer equation~\eqref{eq.v} is not exactly null 
controllable at time $T$. In fact, it is not even exactly null controllable in 
any finite time greater than $T$. Indeed, since $u^0(t,\cdot) = 0$ for $t \geq 
T$, $v$ is the solution to~\eqref{eq.v.after} for $t \geq T$. Hence, reaching 
exactly zero at time~$T$ is equivalent to reaching exactly zero at any later 
time.

Let us present a reduced toy model to explain the difficulty. We consider a 
rectangular domain and a scalar-valued unknown function $v$ solution to the 
following system:
\begin{equation} \label{eq.toy}
 \left\{
 \begin{aligned}
 \partial_t v + \partial_{x} v - \partial_{zz} v & = 0
 && \quad [0,T] \times [0,1] \times [0,1], \\
 v(t,x,0) & = g(t,x) 
 && \quad [0,T] \times [0,1], \\
 v(t,x,1) & = 0 
 && \quad [0,T] \times [0,1], \\
 v(t,0,z) & = q(t,z) 
 && \quad [0,T] \times [0,1], \\
 v(0,x,z) & = 0
 && \quad [0,1] \times [0,1].
 \end{aligned}
 \right.
\end{equation}
System~\eqref{eq.toy} involves both a known tangential transport term and a
normal diffusive term. At the bottom boundary, $g(t,x)$ is a smooth fixed 
pollution source term (which models the action of $N(u^0)$, the boundary layer
residue created by our reference Euler flow). At the left inlet vertical 
boundary $x = 0$, we can choose a Dirichlet boundary value control $q(t,z)$. 
Hence, applying the same strategy as described above, we can control any finite
number of vertical modes \fm{provided that $T \geq 1$.}

However, let us check that it would not be reasonable to try to control the 
system exactly to zero at \fm{any given time $T \geq 1$}. Let us consider a 
vertical slice located at $x_\star \in (0,1)$ of the domain at the final time 
and follow the flow backwards by defining:
\begin{equation} \label{eq.def.vstar}
 v_\star(t,z) := v(t,x_\star+(t-T),z).
\end{equation}
Hence, letting $T_\star := T - x_\star \fm{\geq 0}$ and using~\eqref{eq.def.vstar}, 
$v_\star$ is the solution to a one dimensional heat system:
\begin{equation} \label{eq.toy.1d}
\left\{
\begin{aligned}
\partial_t v_\star - \partial_{zz} v_\star & = 0
&& \quad [T_\star,T] \times [0,1], \\
v_\star(t,0) & = g_\star(t) 
&& \quad [T_\star,T], \\
v_\star(t,1) & = 0 
&& \quad [T_\star,T], \\
v_\star(0,z) & = q_\star(z)
&& \quad [0,1],
\end{aligned}
\right.
\end{equation}
where $g_\star(t) := g(t,x_\star+(t-T))$ is smooth but fixed and 
$q_\star(z) := q(T_\star,z)$ is an initial data that we can choose as if it was 
a control. Actually, let us change a little the definition of $v_\star$ to 
lift the inhomogeneous boundary condition at $z = 0$. We set:
\begin{equation} \label{eq.def.vstar2}
 v_\star(t,z) := v(t,x_\star+(t-T),z) - (1-z)g_\star(t).
\end{equation}
Hence, system~\eqref{eq.toy.1d} \fm{reduces} to:
\begin{equation} \label{eq.toy.1d.2}
 \left\{
 \begin{aligned}
 \partial_t v_\star - \partial_{zz} v_\star & = -(1-z)g_\star'(t)
 && \quad [T_\star,T] \times [0,1], \\
 v_\star(t,0) & = 0
 && \quad [T_\star,T], \\
 v_\star(t,1) & = 0 
 && \quad [T_\star,T], \\
 v_\star(0,z) & = q_\star(z) 
 && \quad [0,1],
 \end{aligned}
 \right.
\end{equation}
where we change the definition of $q_\star(z) := q(T_\star,z) - (1-z) 
g_\star(T_\star)$. Introducing the Fourier basis adapted to 
system~\eqref{eq.toy.1d.2}, $e_n(z) := \sin(n\pi z)$, we can solve explicitly 
for the evolution of $v_\star$:
\begin{equation} \label{eq.vstar.fourier}
 v_\star^n(T) = e^{-n^2\pi^2 T} v_\star^n(0)
 - \int_{T_\star}^T e^{-n^2 \pi^2 (T-t)} \langle 1 - z, e_n \rangle g_\star'(t) 
 \dt.
\end{equation}
If we assume that the pollution term $g$ vanishes at the final time, 
equation~\eqref{eq.vstar.fourier} and exact null controllability  would impose 
the choice of the initial control data:
\begin{equation} \label{eq.vstar.fourier2}
 q_\star^n = \langle 1 - z, e_n \rangle \int_{T_\star}^T e^{n^2 \pi^2 t} 
 g_\star'(t) \fm{\dt}.
\end{equation}
Even if the pollution term $g$ is very smooth, there is nothing good to be 
expected from relation~\eqref{eq.vstar.fourier2}. Hoping for cancellations or
vanishing moments is not reasonable because we would have to guarantee this 
relation for all Fourier modes $n$ and all $x_\star \in [0,1]$. Thus, the 
boundary data control that we must choose has exponentially growing Fourier
modes. \fm{Heuristically, it belongs to the dual of a Gevrey space.}

The intuition behind relation~\eqref{eq.vstar.fourier2} is that the control
data emitted from the left inlet boundary \fm{undergoes} a heat regularization 
process as they move towards their final position. In the meantime, 
\fm{the} fixed 
polluting boundary data is injected directly at positions within the domain and
undergoes less smoothing. This prevents any hope from proving exact null 
controllability for system~\eqref{eq.toy} within reasonable functional spaces
and explains why we had to resort to a low-modes control process.

Theorem~\ref{thm.weak.null} is an exact null controllability result. To 
conclude our proof, we use a local argument stated as Lemma~\ref{lemma.local} in 
paragraph~\ref{par.proof} which uses diffusion in all directions. Boundary 
layer systems like~\eqref{eq.toy} exhibit no diffusion in the tangential 
direction and are thus harder to handle. The conclusion of our proof uses 
the initial formulation of the Navier-Stokes equation with a fixed 
$\mathcal{O}(1)$ viscosity.

% ==============================================================================
\section{Estimation of the remainder and technical profiles}
\label{section.remainder}
% ==============================================================================

In the previous sections, we presented the construction of the Euler 
reference flushing trajectory~$u^0$, the transported flow involving the 
initial data~$u^1$ and the leading order boundary layer correction~$v$.
In this section, we follow on with the expansion and introduce technical 
profiles, which do not have a clear physical interpretation. The purpose of the 
technical decomposition we propose is to help us prove that the remainder we 
obtain is indeed small. We will use the following expansion:
\begin{align} 
\label{eq.expansion.full}
\ue & = u^0 
+ \sqrt{\varepsilon} \eval{v} 
+ \varepsilon u^1
+ \varepsilon \nabla \theta^\varepsilon
+ \varepsilon \eval{w}
+ \varepsilon \fm{\Rem}, \\
\label{eq.expansion.p.full}
p^\varepsilon 
& = p^0
+ \varepsilon \eval{q}
+ \varepsilon p^1 
+ \varepsilon \mu^\varepsilon
+ \varepsilon \pi^\varepsilon,
\end{align}
where $v$, $w$ and $q$ are profiles depending on $t,x$ and $z$. For such a 
function $f(t,x,z)$, we use the notation $\eval{f}$ to denote its 
evaluation at $z = \varphi(x)/\sqrt{\varepsilon}$. In the sequel, operators 
$\nabla$, $\Delta$, $D$ and $\diverg$ only act on $x$ variables. We will use 
the following straightforward commutation formulas:
\begin{align}
\label{eq.eval.div}
\diverg \eval{f} 
& = \eval{\diverg f} - n \cdot \eval{\partial_z f} / \sqrt{\varepsilon} \\
\label{eq.eval.nabla}
\nabla \eval{f} 
& = \eval{\nabla f} - n \eval{\partial_z f}  / 
\sqrt{\varepsilon}, \\
\label{eq.eval.navier}
N(\eval{f}) 
& = \eval{N(f)} - \frac{1}{2} \eval{\tanpart{\partial_z f}}  / 
\sqrt{\varepsilon}, \\
\label{eq.eval.delta}
\varepsilon \Delta \eval{f}
& = \varepsilon \eval{\Delta f} + \sqrt{\varepsilon} \Delta \varphi 
\eval{\partial_z f} - 2 \sqrt{\varepsilon} \eval{(n \cdot \nabla) \partial_z f}
+ |n|^2 \eval{\partial_{zz} f}.
\end{align}
Within the $x$-support of boundary layer terms, $|n|^2$ = 1.

% ==============================================================================
\subsection{Formal expansions of constraints}

In this paragraph, we are interested in the formulation of the boundary 
conditions and the incompressibility condition for the full expansion. 
We plug expansion~\eqref{eq.expansion.full} into these conditions and 
identify the successive orders of power of $\sqrt{\varepsilon}$.

\subsubsection{Impermeability boundary condition}

The impermeability boundary condition $\ue \cdot n = 0$ on~$\BorderExt$ yields:
\begin{align}
 \label{eq.imper.u0}
 u^0 \cdot n & = 0, \\
 \label{eq.imper.v}
 v(\cdot,\cdot,0) \cdot n & = 0, \\
 \label{eq.imper.1}
 u^1 \cdot n + \partial_n \theta^\varepsilon
 + w(\cdot,\cdot,0) \cdot n + \Rem \cdot n & = 0.
\end{align}
By construction of the Euler trajectory $u^0$, equation~\eqref{eq.imper.u0} is 
satisfied. Since the boundary profile $v$ is tangential, 
equation~\eqref{eq.imper.v} is also satisfied. By construction, we also already 
have $u^1\cdot n = 0$. In order to be able to carry out integrations by part 
for the estimates of the remainder, we also would like to impose $\Rem \cdot n 
= 0$. Thus, we read~\eqref{eq.imper.1} as a definition of~$\partial_n 
\theta^\varepsilon$ once $w$ is known:
\begin{equation} \label{eq.flux.u1.w}
 \forall t \geq 0, \forall x \in \BorderExt, \quad 
 \partial_n \theta^\varepsilon(t,x) = - w(t,x,0) \cdot n. 
\end{equation}

\subsubsection{Incompressibility condition}

The (almost) incompressibility condition $\diverg \ue = \sigma^0$ in $\Omext$ 
($\sigma^0$ is smooth forcing terms supported outside of the physical domain $\Omega$) yields:
\begin{align}
 \label{eq.diverg.0}
 \diverg u^0 - n \cdot \eval{\partial_z v}& = 
 \sigma^0, \\
 \label{eq.diverg.12}
 \eval{\diverg v} - n \cdot \eval{\partial_z w}& = 0, \\
 \label{eq.diverg.1}
 \diverg u^1 + \diverg \nabla \theta^\varepsilon 
 + \eval{\diverg w} + \diverg \Rem & = 0.
\end{align}
In~\eqref{eq.diverg.12} and~\eqref{eq.diverg.1}, we used 
formula~\eqref{eq.eval.div} to isolate the contributions to the
divergence coming from the slow derivatives with the one coming from the fast
derivative $\partial_z$. By construction $\diverg u^0 = \sigma^0$, 
$\diverg u^1 = 0$, $n \cdot \partial_z v = 0$ and we would like to work 
with $\diverg \Rem = 0$. Hence, we read~\eqref{eq.diverg.12} 
and~\eqref{eq.diverg.1} as:
\begin{align}
 \label{eq.diverg.w}
 n \cdot \eval{\partial_z w} & = \eval{\diverg v}, \\
 \label{eq.diverg.u2}
 - \Delta \theta^\varepsilon & = \eval{\diverg w}.
\end{align}

\subsubsection{Navier boundary condition}

Last, we turn to the slip-with-friction boundary condition. Proceeding as above 
yields by identification:
\begin{align}
 \label{eq.navier.0}
 N(u^0) - \frac{1}{2} \tanpart{\partial_z v} \big\vert_{z=0} & = 0, \\
 \label{eq.navier.12}
 N(v) \big\vert_{z=0}
 - \frac{1}{2} \tanpart{\partial_z w} \big\vert_{z=0} & = 0, \\
 \label{eq.navier.1}
 N(u^1) + N(\nabla \theta^\varepsilon) + N(w)\big\vert_{z=0} + N(\Rem) & = 0.
\end{align}
By construction,~\eqref{eq.navier.0} is satisfied. We will choose a basic 
lifting to guarantee~\eqref{eq.navier.12}. Last, we read~\eqref{eq.navier.1} as 
\fm{an} inhomogeneous boundary condition for the remainder:
\begin{equation} \label{eq.def.g}
 N(\Rem) = g^\varepsilon := - N(u^1) - N(\nabla \theta^\varepsilon) 
 - N(w) \big\vert_{z=0}. 
\end{equation}

% ==============================================================================
\subsection{Definitions of technical profiles}

At this stage, the three main terms $u^0$, $v$ and $u^1$ are defined. In this 
paragraph, we explain step by step how we build the following technical 
profiles of the expansion. For any $t \geq 0$, the profiles are built 
sequentially from the values of~$v(t,\cdot,\cdot)$. Hence, they will inherit
from the boundary layer profile its smoothness with respect to the slow 
variables $x$ and its time decay estimates obtained from Lemma~\ref{lemma.f}.

\subsubsection{Boundary layer pressure}

Equation~\eqref{eq.v} only involves the tangential part of the symmetrical 
convective product between~$u^0$ and~$v$. Hence, to compensate its normal part, 
we introduce as in~\cite{MR2754340} the pressure $q$ which is defined as the 
unique solution vanishing as $z\rightarrow +\infty$ to:
\begin{equation} \label{eq.q}
 \left[ (u^0 \cdot \nabla) v + (v \cdot \nabla) u^0 \right] \cdot n
 = \partial_z q.
\end{equation}
Hence, we can now write:
\begin{equation} \label{eq.v.full}
 \partial_t v + (u^0 \cdot \nabla) v + (v \cdot \nabla) u^0
   + u^0_\flat z \partial_z v - \partial_{zz} v - n \partial_z q = 0.
\end{equation}
This pressure profile vanishes as soon as $u^0$ vanishes, hence in 
particular for $t \geq T$. For any $p, s, n \in \N$, the following estimate
is straightforward:
\begin{equation} \label{eq.estimate.qpressure}
 \fm{
 \left| q(t, \cdot, \cdot) \right|_{H^1_x(H^{0,0}_z)}
 \lesssim 
 \left| v(t, \cdot, \cdot) \right|_{H^2_x(H^{0,2}_z)}
 }.
\end{equation}

\subsubsection{Second boundary corrector}

The first boundary condition~$v$ generates a non vanishing slow divergence and 
a non vanishing tangential boundary flux. The role of the profile $w$ is to 
lift two unwanted terms that would be too hard to handle directly in the 
equation of the remainder. We define $w$ as:
\begin{align} \label{eq.def.w}
 w(t,x,z) := - 2 e^{-z} N(v)(t,x,0) - n(x) \int_z^{+\infty} \diverg 
 v(t,x,z')\dz'
\end{align}
Definition~\eqref{eq.def.w} allows to guarantee condition~\eqref{eq.navier.12}. 
Moreover, under the assumption $|n(x)|^2 = 1$ for any~$x$ in the $x$-support of 
the boundary layer, this definition also fulfills  
condition~\eqref{eq.diverg.12}. In equation~\eqref{eq.def.w} it is essential
that $n(x)$ does not vanish on the $x$-support of $v$. This is why we dedicated
paragraph~\ref{par.localize} to proving we could maintain a small enough 
support for the boundary layer. For any $p, s, n \in \N$, the following 
estimates are straightforward:
\begin{align}
\label{eq.estimate.w.1}
\left| \tanpart{w(t, \cdot, \cdot)} \right|_{H^p_x(H^{s,n}_z)}
& \lesssim 
\left| v(t, \cdot, \cdot) \right|_{H^{p+1}_x(H^{1,1}_z)},  \\
\label{eq.estimate.w.2}
\left| w(t, \cdot, \cdot) \cdot n \right|_{H^p_x(H^{0,n}_z)}
& \lesssim 
\left| v(t, \cdot, \cdot) \right|_{H^{p+1}_x(H^{0,n+2}_z)},  \\
\label{eq.estimate.w.3}
\left| w(t, \cdot, \cdot) \cdot n \right|_{H^p_x(H^{s+1,n}_z)}
& \lesssim 
\left| v(t, \cdot, \cdot) \right|_{H^{p+1}_x(H^{s,n}_z)}.
\end{align}
Estimates~\eqref{eq.estimate.w.1},~\eqref{eq.estimate.w.2} 
and~\eqref{eq.estimate.w.3} can be grossly summarized sub-optimally by:
\begin{equation} \label{eq.estimate.w.general}
\left| w(t, \cdot, \cdot) \right|_{H^p_x(H^{s,n}_z)}
\lesssim 
\left| v(t, \cdot, \cdot) \right|_{H^{p+1}_x(H^{s+1,n+2}_z)}.
\end{equation}

\subsubsection{Inner domain corrector}

Once $w$ is defined by~\eqref{eq.def.w}, the collateral damage is that this 
generates a non vanishing boundary flux $w \cdot n$ on $\fm{\BorderExt}$ and a
slow divergence. For a fixed time $t \geq 0$, we define $\theta^\varepsilon$
as the solution to:
\begin{equation} \label{eq.theta1}
 \left\{
 \begin{aligned}
    \Delta \theta^\varepsilon & = - \eval{\diverg w} && 
    \quad \textrm{in }\Omext, \\
    \partial_n \theta^\varepsilon & = -w(t,\cdot,0)\cdot n && 
    \quad \textrm{\fm{on} }\BorderExt.
 \end{aligned}
 \right.
\end{equation}
System~\eqref{eq.theta1} is well-posed as soon as the usual compatibility 
condition between the source terms is satisfied. Using Stokes formula,
equations~\eqref{eq.eval.div} and~\eqref{eq.diverg.12}, we compute:
\begin{equation}
 \begin{split}
  \int_{\BorderExt} w(t,\cdot,0) \cdot n
  = \int_{\BorderExt} \eval{w} \cdot n
  & = \int_{\Omext} \diverg \eval{w} \\
  & = \int_{\Omext} \eval{\diverg w} - \varepsilon^{-\frac{1}{2}}
  n \cdot \eval{\partial_z w} \\
  & = \int_{\Omext} \eval{\diverg w} - \varepsilon^{-\frac{1}{2}}
  \eval{\diverg v} \\
  & = \int_{\Omext} \eval{\diverg w} - \varepsilon^{-\frac{1}{2}}
  \diverg \eval{v} + \varepsilon^{-1} n \cdot 
  \eval{\partial_z v} \\
  & = \int_{\Omext} \eval{\diverg w} - 
  \varepsilon^{-\frac{1}{2}} \int_{\BorderExt} \eval{v} \cdot n
  = \int_{\Omext} \eval{\diverg w},
 \end{split}
\end{equation}
where we used twice the fact that $v$ is tangential. Thus, the compatibility
condition is satisfied and system~\eqref{eq.theta1} has a unique solution.
The associated potential flow $\nabla\theta^\varepsilon$ solves:
\begin{equation} \label{eq.tilde.u1}
	\left\{
	\begin{aligned}
		\partial_t \nabla\theta^\varepsilon
		+ \left( u^0 \cdot \nabla \right) \nabla\theta^\varepsilon
		+ \left( \nabla\theta^\varepsilon \cdot \nabla \right) u^0
		+ \nabla \mu^\varepsilon
		& = 0,
		&&\quad \textrm{in } \Omext \quad \textrm{for } t \geq 0, \\
		\diverg \nabla\theta^\varepsilon & = -\eval{\diverg w}
		&&\quad \textrm{in } \Omext \quad \textrm{for } t \geq 0, \\
		\nabla\theta^\varepsilon \cdot n & = - w_{\rvert z = 0} \cdot n
		&&\quad \textrm{\fm{on} } \BorderExt \quad \textrm{for } t \geq 0,
	\end{aligned}
	\right.
\end{equation}
where the pressure term  
$\mu^\varepsilon := - \partial_t \theta^\varepsilon
- u^0 \cdot \nabla\theta^\varepsilon$ absorbs all \fm{other} terms in the
evolution equation \fm{(see~\eqref{eq.euler.irrot})}. Estimating roughly 
$\theta^\varepsilon$ using standard
regularity estimates for the Laplace equation yields:
\begin{equation} \label{eq.estimate.theta.h4}
 \begin{split}
  \left| \theta^\varepsilon(t,\cdot) \right|_{H^4_x}
  & \lesssim
  \left| \eval{\diverg w}(t,\cdot) \right|_{H^2_x} +
  \left| w(t,\cdot,0) \cdot n \right|_{H^3_x} \\
  & \lesssim
  \varepsilon^{\frac{1}{4}}
  \left| w(t) \right|_{H^4_x(H^{0,0}_z)} +
  \varepsilon^{-\frac{1}{4}}
  \left| w(t) \right|_{H^3_x(H^{1,0}_z)} +
  \varepsilon^{-\frac{3}{4}}
  \left| w(t) \right|_{H^2_x(H^{2,0}_z)} +
  \left| v(t) \right|_{H^3_x(H^{0,1}_z)} \\
  & \lesssim \varepsilon^{-\frac{3}{4}} 
  \left| w(t) \right|_{H^4_x(H^{2,0}_z)}
  + \left| v(t) \right|_{H^3_x(H^{0,1}_z)},
 \end{split}
\end{equation}
where we used~\cite[Lemma 3, page 150]{MR2214949} to benefit from the fast 
variable scaling. Similarly,
\begin{align} \label{eq.estimate.theta.h3}
 \left| \theta^\varepsilon(t,\cdot) \right|_{H^3_x}
 & \lesssim \varepsilon^{-\frac{1}{4}} 
 \left| w(t) \right|_{H^3_x(H^{1,0}_z)} 
 + \left| v(t) \right|_{H^2_x(H^{0,1}_z)}, \\
 \label{eq.estimate.theta.h2}
 \left| \theta^\varepsilon(t,\cdot) \right|_{H^2_x}
 & \lesssim \varepsilon^{\frac{1}{4}} \left| w(t) \right|_{H^2_x(H^{0,0}_z)}
 + \left| v(t) \right|_{H^1_x(H^{0,1}_z)}.
\end{align}

% ==============================================================================
\subsection{Equation for the remainder}

In the extended domain $\Omext$, the remainder is a solution to:
\begin{equation} \label{eq.R}
 \left\{
 \begin{aligned}
   \partial_t \Rem - \varepsilon \Delta \Rem
   + \left( \ue \cdot \nabla \right) \Rem
   + \nabla \pi^\varepsilon 
   & = \eval{f^\varepsilon} - \eval{A^\varepsilon \Rem}
   &&\quad \textrm{in } \Omext \quad \textrm{for } t \geq 0, \\
   \diverg \Rem & = 0
   &&\quad \textrm{in } \Omext \quad \textrm{for } t \geq 0, \\
   N(\Rem)
   & = g^\varepsilon
   &&\quad \textrm{\fm{on} } \BorderExt \quad \textrm{for } t \geq 0, \\
   \Rem \cdot  n& = 0 
   &&\quad \textrm{\fm{on} } \BorderExt \quad \textrm{for } t \geq 0, \\
   \Rem(0, \cdot) & = 0
   &&\quad \textrm{in } \Omext \quad \textrm{at } t = 0.
 \end{aligned}
 \right.
\end{equation}
\fm{Recall that $g^\varepsilon$ is defined in~\eqref{eq.def.g}.}
We introduce the amplification operator:
\begin{equation} \label{eq.def.A}
 A^\varepsilon \Rem 
 := 
 (\Rem \cdot \nabla) \left( u^0 + \sqrt{\varepsilon}v + \varepsilon u^1 + 
 \varepsilon \nabla \theta^\varepsilon + \varepsilon w \right)
 - (\Rem \cdot n) \left( \partial_z v + \sqrt{\varepsilon} \partial_z 
 w \right)
\end{equation}
and the forcing term:
\begin{equation} \label{eq.def.f}
 \begin{split}
  f^\varepsilon 
  := &
  ( \Delta \varphi \partial_z v
    - 2 \fm{(n\cdot\nabla)} \partial_{z} v 
    + \partial_{zz} w )
  + \sqrt{\varepsilon} ( \Delta v
    + \Delta \varphi \partial_z w
    - 2 \fm{(n\cdot\nabla)} \partial_{z} w )
  + \varepsilon ( \Delta w + \Delta u^1 
	+ \Delta \nabla \theta^\varepsilon ) \\
  & 
  - \left((v + \sqrt{\varepsilon}(w+u^1+\nabla\theta^\varepsilon)) \fm{\cdot} 
  \nabla\right)
		(v + \sqrt{\varepsilon}(w+u^1+\nabla\theta^\varepsilon))
		- \fm{(u^0 \cdot \nabla)} w - \fm{(w\cdot \nabla)} u^0  \\
 &
		- u^0_\flat z \partial_z w
		+ (w+u^1 + \nabla\theta^\varepsilon)\cdot n\partial_z 
		\left(v + \sqrt{\varepsilon}w\right)
		\\
  & - \nabla q - \partial_t w.
 \end{split}
\end{equation}
In~\eqref{eq.def.A} and~\eqref{eq.def.f}, many functions depend on $t, x$ and
$z$. The differential operators $\nabla$ and $\Delta$ only act on the slow 
variables $x$ and the evaluation at $z = \varphi(x) / \sqrt{\varepsilon}$ is 
done \emph{a posteriori} in~\eqref{eq.R}. The derivatives in the fast variable 
direction are explicitly marked with the~$\partial_z$ operator. Moreover, most 
terms are independent of $\varepsilon$, except where explicitly stated in 
$\theta^\varepsilon$ and~$\Rem$.

Expansion~\eqref{eq.expansion.full} contains 4 slowly varying profiles and 2
boundary layer profiles. Thus, computing $\varepsilon \Delta \ue$ using 
formula~\eqref{eq.eval.delta} produces $4 + 2 \times 4 = 12$ terms. Terms 
$\Delta u^0$ and $\eval{\partial_{zz} v}$ have already been taken into account
respectively in~\eqref{eq.u1.ext} and~\eqref{eq.v}. Term $\Delta\Rem$ is written
in~\eqref{eq.R}. The remaining $9$ terms are gathered in the first line of
the forcing term~\eqref{eq.def.f}.

Computing the non-linear term $\fm{(\ue\cdot\nabla)}\ue$ using 
formula~\eqref{eq.eval.nabla} produces $6 \times 4 + 6 \times 2 \times 2 = 48$
terms. First, $8$ have already been taken into account in~\eqref{eq.euler.ext},
\eqref{eq.u1.ext},~\eqref{eq.v} and~\eqref{eq.tilde.u1}. Moreover, $6$ are written 
in~\eqref{eq.R} as $\fm{(\ue\cdot\nabla)}\Rem$, $7$ more as the amplification 
term
\eqref{eq.def.A} and $25$ in the second and third line of~\eqref{eq.def.f}.
The two missing terms $\eval{(v \cdot n)\partial_z v}$ and $\eval{(v \cdot 
n)\partial_z w}$ vanish because $v \cdot n = 0$.

% ==============================================================================
\subsection{Size of the remainder}

We need to prove that equation~\eqref{eq.R} satisfies an energy estimate on the
long time interval $[0,T/\varepsilon]$. Moreover, we need to estimate the size 
of the remainder at the final time and check that it is small. The key point is 
that the size of the source term $\eval{f^\varepsilon}$  is small in 
$L^2(\Omext)$. Indeed, for terms appearing at order $\mathcal{O}(1)$, the 
fast scaling makes us win a $\varepsilon^{\frac{1}{4}}$ factor (see 
for example~\cite[Lemma~3, page~150]{MR2754340}). We proceed as we have done in 
the case of the shape operator in paragraph~\ref{par.r.estimate}. 

The only difference is the estimation of the boundary 
term~\eqref{eq.r.estimate.bt.shape}. We have to take into account the 
inhomogeneous boundary condition $g^\varepsilon$ and the fact that, in the 
general case, the boundary condition matrix $M$ is different from the shape 
operator $\MW$. Using~\eqref{eq.m.to.rot} allows us to write, on $\BorderExt$:
\begin{equation} \label{eq.m.to.mw}
 \fm{\left(\Rem \times (\nabla \times \Rem)\right)} \cdot n 
 = \fm{\left((\nabla \times \Rem) \times n\right)} \cdot\Rem
 = 2 \left(N(\Rem) + \tanpart{(M-\MW) \Rem } \right) \cdot \Rem.
\end{equation}
Introducing smooth extensions of $M$ and $\MW$ to the whole
domain $\Omext$ also \fm{allows to extend} the Navier operator~$N$ defined 
in~\eqref{eq.def.navier}\fm{, since the extension of the normal $n$ extends 
the definition of the tangential part~\eqref{eq.def.tanpart}. 
Using~\eqref{eq.m.to.mw}}, we transform the boundary term into an inner term:
\begin{equation} \label{eq.rr.boundary}
 \begin{split}
 \left| \int_{\BorderExt} \fm{\left(\Rem \times (\nabla \times \Rem)\right)} 
 \cdot n  \right| 
 & = 2 \left| \int_{\BorderExt} g^\varepsilon \fm{\cdot} \Rem + ((M-\MW) 
 \Rem)\fm{\cdot}\Rem
 \right| \\
 & = 2 \left| \int_{\Omext} \diverg \left[ (g^\varepsilon \fm{\cdot}\Rem) n + 
 (((M-\MW) 
 \Rem)\fm{\cdot}\Rem) n \right] \right| \\
 & \fm{\leq \lambda \left|\nabla\Rem\right|_2^2
 + C_\lambda \left( \left|\Rem\right|_2^2 + 
 + \left|g^\varepsilon\right|_2^2 
 + \left|\nabla g^\varepsilon\right|_2^2 \right),}
 \end{split}
\end{equation}
\fm{for any $\lambda > 0$ to be chosen and where $C_\lambda$ is a positive 
constant depending on $\lambda$. We intend to absorb the~$|\nabla\Rem|_2^2$ 
term of~\eqref{eq.rr.boundary} using the dissipative term. However, the 
dissipative term only provides the norm of the symmetric part of the gradient. 
We recover the full gradient using the Korn inequality. Indeed, since $\diverg 
\Rem = 0$ in $\Omext$ and $\Rem \cdot n = 0$ on $\partial \Omext$, the 
following estimate holds (see~\cite[Corollary~1, Chapter~IX, 
page~212]{MR1064315}):
\begin{equation} \label{eq.korn}
\left|\Rem\right|_{H^1(\Omext)}^2 
\fm{\leq} \fm{C_K} \left|\Rem\right|_{L^2(\Omext)}^2 + \fm{C_K} 
\left|\nabla\times\Rem\right|_{L^2(\Omext)}^2.
\end{equation}
We choose $\lambda = 1/(2C_K)$ in~\eqref{eq.rr.boundary}. Combined 
with~\eqref{eq.korn}
and a Grönwall} inequality as in 
paragraph~\ref{par.r.estimate} yields an 
energy estimate for $t \in [0,T/\varepsilon]$:
\begin{equation} \label{eq.R.gron3}
 \left|\Rem\right|^2_{L^\infty(L^2)} + 
 \varepsilon \left|\Rem\right|^2_{L^2(H^1)}
 = \mathcal{O}(\varepsilon^{\frac{1}{4}}),
\end{equation}
as long as we can check that the following estimates hold:
\begin{align}
 \label{eq.estimate.a}
 \left\| A^\varepsilon \right\|_{L^1(L^\infty)} & = \mathcal{O}(1), \\
 \label{eq.estimate.g}
 \varepsilon \left\| g^\varepsilon \right\|^2_{L^2(H^1)} & = 
 \mathcal{O}(\varepsilon^\frac{1}{4}), \\
 \label{eq.estimate.f}
 \left\| f^\varepsilon \right\|_{L^1(L^2)} & = 
 \mathcal{O}(\varepsilon^\frac{1}{4}).
\end{align}
In particular, the remainder at time $T/{\varepsilon}$ is small and we can 
conclude the proof of Theorem~\ref{thm.weak.null} with the same arguments as
in paragraph~\ref{par.proof}. Therefore, it only remains to be checked that 
estimates~\eqref{eq.estimate.a},~\eqref{eq.estimate.f} and~\eqref{eq.estimate.g}
hold on the time interval $[0, T/\varepsilon]$. In fact, they even hold on 
the whole time interval $[0,+\infty)$.

\bigskip \noindent \textbf{Estimates for $A^\varepsilon$.} The two terms 
involving $u^0$ and $u^1$ vanish for $t \geq T$. Thus, they satisfy 
estimate~\eqref{eq.estimate.a}. For $t \geq 0$, we estimate the other terms
in $A^\varepsilon$ in the following way:
\begin{align}
 \sqrt{\varepsilon} \left| \nabla v(t) \right|_{L^\infty}
 & \lesssim \sqrt{\varepsilon} \left| v(t) \right|_{H^3_x(H^{1,\fm{0}}_z)}, \\
 {\varepsilon} \left| \nabla w(t) \right|_{L^\infty}
 & \lesssim {\varepsilon} \left| w(t) \right|_{H^3_x(H^{1,\fm{0}}_z)}, \\
 \left| \partial_z v(t) \right|_{L^\infty}
 & \lesssim \left| v(t) \right|_{H^2_x(H^{2,\fm{0}}_z)}, \\
 \sqrt{\varepsilon} \left| \partial_z w(t) \right|_{L^\infty}
 & \lesssim \sqrt{\varepsilon} \left| w(t) \right|_{H^2_x(H^{2,\fm{0}}_z)}, \\
 {\varepsilon} \left| \nabla^2 \theta^\varepsilon(t) \right|_{L^\infty}
 & \lesssim \varepsilon \left| \theta^\varepsilon(t) \right|_{H^4}.
\end{align}
Combining these estimates with~\eqref{eq.estimate.theta.h4} 
and~\eqref{eq.estimate.w.general} yields:
\begin{equation}
 \left\| A^\varepsilon \right\|_{L^1(L^\infty)}
 \lesssim \| u^0 \|_{L^1_{[0,T]}(H^3)}
 + \varepsilon \| u^1 \|_{L^1_{[0,T]}(H^3)}
 + \left\| v \right\|_{L^1(H^5_x(H^{3,\fm{2}}_z))}.
\end{equation}
Applying Lemma~\ref{lemma.vanishing} with $p = 5$, $n = \fm{4}$ and $m = 
\fm{2}$ concludes the proof of~\eqref{eq.estimate.a}.

\bigskip \noindent \textbf{Estimates for $g^\varepsilon$.} For $t \geq 0$, 
using the definition of $g^\varepsilon$ in~\eqref{eq.def.g}, we estimate:
\begin{align}
 \varepsilon \left|N(u^1)(t)\right|_{H^1}^2
 & \lesssim \varepsilon \left|u^1(t)\right|_{H^2}^2, \\
 \varepsilon \left|N(\nabla\theta^\varepsilon)(t)\right|_{H^1}^2
 & \lesssim \varepsilon \left|\theta^\varepsilon(t)\right|_{H^3}^2, \\
 \varepsilon \left|N(w)_{\rvert z = 0}(t)\right|_{H^1}^2
 & \lesssim \varepsilon \left|w(t)\right|_{H^2_x(H^{1,1}_z)}^2.
\end{align}
Combining these estimates with~\eqref{eq.estimate.theta.h3} 
and~\eqref{eq.estimate.w.general} yields:
\begin{equation}
 \varepsilon \left\| g^\varepsilon \right\|_{L^2(H^1)}^2
 \lesssim \varepsilon \| u^1 \|_{L^2_{[0,T]}(H^2)}^2
+ \varepsilon^{\frac{3}{4}} \left\| v \right\|_{L^2(H^4_x(H^{2,3}_z))}^2.
\end{equation}
Applying Lemma~\ref{lemma.vanishing} with $p = 4$, $n = 4$ and $m = 3$ 
concludes the proof of~\eqref{eq.estimate.g}.

\bigskip \noindent \textbf{Estimates for $f^\varepsilon$.} For $t \geq 0$, 
we estimate the 36 terms involved in the definition of $f^\varepsilon$ 
in~\eqref{eq.def.f}. The conclusion is that~\eqref{eq.estimate.f} holds as soon
as $v$ is bounded in $L^1(H^4_x(H^{3,4}_z))$. This can be obtained from 
Lemma~\ref{lemma.vanishing} with $p = 4$, $n = 6$ and $m = 4$. Let us give a 
few examples of some of the terms requiring the most regularity. The key point
\fm{is that} all terms of~\eqref{eq.def.f} appearing at order 
$\mathcal{O}(1)$ involve a boundary layer term and thus benefit from the
fast variable scaling gain of $\varepsilon^{\frac{1}{4}}$ in $L^2$ 
of~\cite[Lemma 3, page 150]{MR2754340}. For example, 
with~\eqref{eq.estimate.w.general}:
\begin{equation}
 \left| \eval{\partial_{zz} w}(t) \right|_{L^2}
 \lesssim \varepsilon^\frac{1}{4} \left| w(t) \right|_{H^1_x(H^{2,0}_z)}
 \lesssim \varepsilon^\frac{1}{4} \left| v(t) \right|_{H^2_x(H^{3,2}_z)}.
\end{equation}
Using~\eqref{eq.estimate.theta.h3} and~\eqref{eq.estimate.w.general}, we obtain:
\begin{equation}
 \varepsilon \left| \Delta \nabla \theta^\varepsilon(t) \right|_{L^2}
 \lesssim \varepsilon^\frac{3}{4} \left| w(t) \right|_{H^3_x(H^{1,0}_z)}
 + \left| v(t) \right|_{H^2_x(H^{0,1}_z)}
 \lesssim \varepsilon^\frac{3}{4} \left| v(t) \right|_{H^4_x(H^{2,2}_z)}.
\end{equation}
The time derivative $\eval{\partial_t w}$ can be estimated easily because the
time derivative commutes with the definition of $w$ through 
formula~\eqref{eq.def.w}. Moreover, $\partial_t v$ can be recovered from its
evolution equation~\eqref{eq.v}:
\begin{equation}
 \left| \eval{\partial_t w}(t) \right|_{L^2}
 \lesssim \varepsilon^\frac{1}{4} \left| \partial_t w(t) 
 \right|_{H^1_x(H^{0,0}_z)}
 \lesssim \varepsilon^\frac{1}{4} \left| \partial_t v(t) 
 \right|_{H^2_x(H^{1,2}_z)}
 \lesssim \varepsilon^\frac{1}{4} 
 \left( \left| v(t) \right|_{H^3_x(H^{2,4}_z)}
 +\left| \force^v(t) \right|_{H^3_x(H^{2,4}_z)} \right).
\end{equation}
The forcing term $\force^v$ is smooth and supported in $[0,T]$. As a last 
example, consider the term $(\nabla\theta^\varepsilon \cdot n) \partial_z v$.
We use the injection $H^1 \hookrightarrow L^4$ which is valid in \fm{2D and in 
3D} and estimate~\eqref{eq.estimate.theta.h2}:
\begin{equation}
 \left| (\nabla\theta^\varepsilon \cdot n) \eval{\partial_z v}(t) \right|_{L^2}
 \lesssim \left| \nabla\theta^\varepsilon(t) \right|_{H^1} 
 \left| \eval{\partial_z v}(t) \right|_{H^1}
 \lesssim \varepsilon^{\frac{1}{4}} \left| v(t) \right|_{H^3_x(H^{1,2}_z)} 
 \left| v(t) \right|_{H^2_x(H^{1,0}_z)}\fm{.}
\end{equation}
As~\eqref{eq.v.estimate} both yields $L^\infty$ and $L^1$ estimates in time, 
this estimation is enough to conclude. All remaining nonlinear convective
terms can be handled in the same way or even more easily. \fm{The pressure
term is estimated using~\eqref{eq.estimate.qpressure}.}

\bigskip

These estimates conclude the proof of \fm{small-time} global approximate null 
controllability in the general case. Indeed, both the boundary layer profile
(thanks to Lemma~\ref{lemma.vanishing}) and the remainder are small at the 
final time. \fm{Thus, as announced in Remark~\ref{remark.strong}, we have not 
only proved that there exists a weak trajectory going approximately to zero, 
but that any weak trajectory corresponding to our source terms 
$\force^\varepsilon$ and $\sigma^\varepsilon$ goes approximately to zero.}
We combine this result with the local and regularization arguments 
explained in paragraph~\ref{par.proof} to conclude the proof of 
Theorem~\ref{thm.weak.null} in the general case.

% ==============================================================================
\section{Global controllability to the trajectories}
\label{section.trajectories}
% ==============================================================================

In this section, we explain how our method can be adapted to prove 
\fm{small-time} 
global exact controllability to other states than the null equilibrium state.
Since the Navier-Stokes equation exhibits smoothing properties, all conceivable
target states must be smooth enough. Generally speaking, the exact description 
of the set of reachable states for a given controlled system is a difficult 
question. Already for the heat equation on a line segment, the complete 
description of this set is still open (see~\cite{2016arXiv160902692D} 
and~\cite{MR3551775} for recent developments on this topic). The usual 
circumvention is to study the notion of global exact controllability 
\emph{to the trajectories}. That is, we are interested in whether all known 
final states of the system are reachable from any other arbitrary initial state 
using a control:

\begin{theorem} \label{thm.trajectories}
 Let $T > 0$. \fm{Assume that the intersection of $\Gamma$ with each connected
 component of $\partial\Omega$ is smooth.} 
 Let $\bar{u} \in \mathcal{C}_w^0([0,T];\Hspace) 
 \cap L^2((0,T); H^1(\Omega))$ be a fixed weak trajectory
 of~\eqref{eq.main} \fm{with smooth~$\force$}. Let $\uinit \in 
 \Hspace$ be another initial data 
 unrelated with~$\bar{u}$. Then there exists 
 $u \in \mathcal{C}_w^0([0,T];\Hspace)
 \cap L^2((0,T); H^1(\Omega))$ a weak trajectory of~\eqref{eq.main}
 with $u(0,\cdot) = \uinit$ satisfying $ u(T,\cdot) = \bar{u}(T,\cdot)$.
\end{theorem}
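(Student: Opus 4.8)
The plan is to mimic the four-phase scheme of paragraph~\ref{par.proof} — initial regularization, short global approximate stage, second regularization, final local exact stage — with two changes: in the approximate stage the state is driven close to $\bar u$ at an intermediate time rather than to $0$, and in the last stage one uses local exact controllability \emph{to the trajectories} instead of local null controllability. First I would extend $\Omega$ to $\Omext$ as in paragraph~\ref{par.extension} and, on the final subinterval where $\bar u$ is smooth (see the last paragraph), fix a smooth extension of $\bar u$ to $\Omext$, still denoted $\bar u$; outside $\bar\Omega$ it is merely a coefficient and may be chosen freely. Pick $0<T_1<T_2<T_3<T$. On $[0,T_1]$ I would, exactly as in the null case, let the artificial divergence relax and use Lemma~\ref{lemma.regularization} to reach $u(T_1,\cdot)\in H^3(\Omext)$; choosing the times appropriately, $\bar u(T_2,\cdot)\in H^3$ as well.

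On the interval $[T_1,T_2]$ of length $\ge T/4$ I would rerun the construction of Sections~\ref{section.friction}--\ref{section.remainder} after the same time rescaling, with a single modification in the order-$\varepsilon$ profile. In the rescaled unknown $\ue$ the desired final value is $\varepsilon\,\bar u(T_2,\cdot)$, which is of size $\varepsilon$; hence the Euler reference flow $u^0$ still returns to $0$ (Lemma~\ref{lemma.euler}) and it is the order-$\varepsilon$ profile $u^1$ that must carry the $O(1)$ information: instead of imposing $u^1(T,\cdot)=0$ inside $\Omega$ as in Lemma~\ref{lemma.u1}, I would build $u^1$ transporting $u(T_1,\cdot)$ at the initial rescaled time to the prescribed profile $\bar u(T_2,\cdot)$ at the final rescaled time inside $\Omega$. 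This is possible because the vorticity $\nabla\times u^1$ is transported (and stretched in 3D) by $u^0$, whose flow flushes $\Omega$ by~\eqref{eq.flow.out}: one first evacuates the vorticity of $u(T_1,\cdot)$, and then, using that the flushing property~\eqref{eq.flow.out} is invariant under time reversal, one prescribes on $\Gamma$ the inflow data producing $\nabla\times\bar u(T_2,\cdot)$ at time $T_2$, the potential part being adjusted thanks to the assumption that $\Gamma$ meets every connected component of $\partial\Omega$. This is nothing but the exact controllability of the linearized Euler (transport) system between arbitrary smooth states, obtained by the same technique and references as Lemma~\ref{lemma.u1}. Everything else — the return trajectory $u^0$, the boundary layer $v$, the vanishing-moment preparation of Lemma~\ref{lemma.vanishing}, the technical correctors and the remainder estimates of Section~\ref{section.remainder} — is unchanged, since the target $\bar u(T_2,\cdot)$ is smooth and already satisfies $\bar u\cdot n=0$ and $N(\bar u)=0$ on $\BorderNav$, so it generates no new boundary layer and the energy estimate still closes. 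One concludes that $|u(T_2,\cdot)-\bar u(T_2,\cdot)|_{L^2(\Omext)}$ can be made as small as we wish.

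It remains to close the gap exactly. On $[T_2,T_3]$ the difference $z:=u-\bar u$ solves inside $\Omega$ a Navier--Stokes system perturbed by the zeroth order term $(z\cdot\nabla)\bar u+(\bar u\cdot\nabla)z$, homogeneous on $\BorderNav$, with $z(T_2,\cdot)$ small in $L^2$; a parabolic smoothing estimate for this system with the smooth coefficient $\bar u$ — the relative analogue of Lemma~\ref{lemma.regularization} — produces a time $T_3$ with $|u(T_3,\cdot)-\bar u(T_3,\cdot)|_{H^3(\Omext)}$ arbitrarily small. On $[T_3,T]$ I would then invoke the local exact controllability to the trajectories of~\cite{MR2224824}, the to-trajectories counterpart of Lemma~\ref{lemma.local}, linearizing around $\bar u$, to reach exactly $\bar u(T,\cdot)$; restricting to $\Omega$ the controlled trajectory obtained on $\Omext$ proves Theorem~\ref{thm.trajectories}.

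The main obstacle is the regularity of the target trajectory: both the perturbed system for $z$ and the local result of~\cite{MR2224824} require $\bar u$ to be smooth on the final subinterval $[T_2,T]$, and this is the only place where the smoothness of $\force$ is used. In dimension two this is automatic, since a weak solution driven by a smooth force is smooth for positive times. In dimension three one chooses $T_2$ so that $\bar u(T_2,\cdot)\in H^3$ and so that the strong solution issued from this datum — which coincides with $\bar u$ by weak--strong uniqueness — exists up to time $T$; this holds as soon as $\bar u$ is regular on a left neighbourhood of $T$. A lesser technical point is to verify that the arbitrary-target flushing construction still enjoys the $L^\infty((0,T),H^3)$ bound required by the remainder estimates, which follows from the constructive argument already used for Lemma~\ref{lemma.u1}.
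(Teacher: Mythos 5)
Your overall skeleton (regularization, global approximate stage, second regularization via the relative analogue of Lemma~\ref{lemma.regularization}, and the local exact result of~\cite{MR2224824} linearized around $\bar u$) does match the paper's proof, and your handling of the last two phases is correct. The gap is in the approximate stage, and it is not a detail: you keep the expansion of Section~\ref{section.remainder} ``unchanged'' with a first-order profile $u^1$ satisfying $u^1(T,\cdot)=\bar u(T_2,\cdot)$ at the end of the (rescaled) inviscid window, held constant during the long dissipation window $[T,T/\varepsilon]$. That does not close the remainder estimate. The source term in the remainder equation contains $\varepsilon\Delta u^1-\varepsilon(u^1\cdot\nabla)u^1$ (plus cross terms with $\nabla\theta^\varepsilon$ and $w$); with $u^1$ frozen at a nonzero value this contribution is $O(\varepsilon)$ uniformly in rescaled time, so its $L^1\bigl((0,T/\varepsilon);L^2\bigr)$ norm is $O(1)$, not $o(1)$, and the Gr\"onwall argument only yields $|\Rem|_{L^\infty(L^2)}=O(1)$. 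Your assertion that ``the energy estimate still closes'' because $\bar u(T_2,\cdot)$ creates no new boundary layer addresses the boundary trace but not this bulk accumulation over the $\varepsilon^{-1}$-long time interval.

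The paper avoids this with two linked modifications that you omit. First, the first-order target at the \emph{end of the inviscid stage} (rescaled time $T$, physical time $\varepsilon T$) is the \emph{pivot state} $\bar u(\varepsilon T,\cdot)\approx\bar u(0,\cdot)$, not the final target $\bar u(T,\cdot)$: since the physical state keeps evolving during the boundary-layer dissipation phase, aiming the transported profile at the endpoint would desynchronize it from the reference trajectory. Second, and decisively, the expansion is \emph{switched} for rescaled $t\ge T$ from $\varepsilon u^1$ to $\varepsilon\bar u(\varepsilon t,\cdot)$ (see~\eqref{eq.exp.after} in the proof of Lemma~\ref{lemma.thm.traj}). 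Because $\bar u$ is an exact Navier--Stokes solution, this choice absorbs $\Delta u^1$ and $(u^1\cdot\nabla)u^1$ into the profile equation itself, so no $O(\varepsilon)$-persistent forcing survives in the remainder equation, and the estimate again closes on $[0,T/\varepsilon]$. Without this change of expansion your modification of Lemma~\ref{lemma.u1} alone is not sufficient. A secondary remark: the paper reduces to a reference trajectory that is smooth on the whole control window $[0,T]$ by locating a smooth subinterval $[T_1,T_2]$ of the given weak trajectory and doing nothing outside it, which is simpler and dimension-independent compared with the 3D weak--strong uniqueness argument you invoke.
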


The strategy is very similar to the one described in the previous sections to 
prove the global null controllability. We start with the following lemma, 
asserting \fm{small-time} global approximate controllability to smooth 
trajectories in the extended domain.

\begin{lemma} \label{lemma.thm.traj}
 Let $T > 0$. Let $(\bar{u},\bar{\force},\bar{\sigma}) 
 \in \mathcal{C}^\infty([0,T]\times\bar{\Omext})$ be a fixed 
 smooth trajectory of~\eqref{eq.main.ext}. 
 Let $\uinit \in \ldiv$ be another initial data 
 unrelated with~$\bar{u}$. For any $\delta > 0$, there exists 
 $u \in \mathcal{C}_w^0([0,T];\ldiv)
 \cap L^2((0,T); H^1(\Omext))$ a weak Leray solution of~\eqref{eq.main.ext}
 with $u(0,\cdot) = \uinit$ satisfying 
 $\left|u(T) - \bar{u}(T)\right|_{L^2(\Omext)} \leq \delta$.
\end{lemma}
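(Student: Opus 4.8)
The plan is to reduce Lemma~\ref{lemma.thm.traj} to the small-time global approximate \emph{null} controllability of the extended system, which is exactly what Sections~\ref{section.slip}--\ref{section.remainder} establish, by subtracting off the known trajectory. Given the smooth trajectory $(\bar u,\bar\force,\bar\sigma)$ of~\eqref{eq.main.ext}, look for $u$ in the form $u=\bar u+(u-\bar u)$; since $\bar u$ is tangent to $\BorderExt$ with $N(\bar u)=0$ there, the difference satisfies a Navier--Stokes type system on $\Omext$ identical to~\eqref{eq.main.ext} \emph{except} for one additional \emph{linear, lower-order} term $(\,\cdot\,\cdot\nabla)\bar u+(\bar u\cdot\nabla)(\,\cdot\,)$, with control force $\force-\bar\force$ (still compactly supported in $\bar\Omext\setminus\bar\Omega$), divergence condition $\sigma-\bar\sigma$, homogeneous conditions $(u-\bar u)\cdot n=0$ and $N(u-\bar u)=0$ on $\BorderExt$, and initial datum $\uinit-\bar u(0,\cdot)$. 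Approximate null controllability for this difference (to within $\delta$ in $L^2(\Omext)$) is precisely the lemma. As in paragraph~\ref{par.proof}, the non-homogeneous divergence is absorbed by a Stokes lift and a short preparation phase, and Lemma~\ref{lemma.regularization} yields an arbitrarily small time $T_1$ at which the state lies in $H^3(\Omext)$; so one may assume $\uinit-\bar u(0,\cdot)\in H^3(\Omext)\cap\ldiv$ and work on $[T_1,T]$, which we relabel $[0,T]$.

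The heart of the argument is then to replay the scheme of Sections~\ref{section.friction}--\ref{section.remainder} with the additional term present. Introduce $\bar u^\varepsilon(t,x):=\varepsilon\bar u(\varepsilon t,x)$, the scaled target trajectory, which solves the scaled system~\eqref{eq.nse} with scaled data $\bar\force^\varepsilon,\bar\sigma^\varepsilon$ (including $\bar u^\varepsilon\cdot n=0$ and $N(\bar u^\varepsilon)=0$ on $\BorderExt$), and look for the scaled solution in the form
\[
 \ue = \bar u^\varepsilon + u^0 + \sqrt\varepsilon\{v\} + \varepsilon u^1 + \varepsilon\nabla\theta^\varepsilon + \varepsilon\{w\} + \varepsilon\Rem ,
\]
with \emph{the very same profiles} $u^0$ (Lemma~\ref{lemma.euler}), $v$ (the boundary layer of~\eqref{eq.v}, with vanishing moments prepared through Lemma~\ref{lemma.vanishing}), $\theta^\varepsilon$, $w$, $q$ as in the null case, \emph{except} that the equation~\eqref{eq.u1.ext} for $u^1$ now carries the extra smooth source $-[(u^0\cdot\nabla)\bar u_\#+(\bar u_\#\cdot\nabla)u^0]$, where $\bar u_\#(t,x):=\bar u(\varepsilon t,x)$, together with the initial datum $u^1(0,\cdot)=\uinit-\bar u(0,\cdot)$. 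One checks that Lemma~\ref{lemma.u1} survives this modification: the new source is smooth, bounded in $L^\infty((0,T);H^3)$, and vanishes for $t\ge T$ (where $u^0\equiv0$); in vorticity form inside $\Omega$ the change of unknown $\omega^1\mapsto\omega^1+\rot\bar u_\#$ turns it into the source-free transport--stretching equation~\eqref{eq.w1} up to an $\mathcal{O}(\varepsilon)$ residue $\partial_t\rot\bar u_\#$, and since the flushing property~\eqref{eq.flow.out} is invariant under time reversal, the transported equation can be steered not only to zero but to any prescribed smooth final state, in particular so that $u^1(T,\cdot)\vert_\Omega=0$ (the passage from zero vorticity to $u^1=0$ involving the same harmonic-potential/topology point as in the null case). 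The $\mathcal{O}(\varepsilon)$ residue is harmless: it can be carried along in the same flushing, or pushed into $\Rem$.

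With $u^0,v,u^1,\theta^\varepsilon,w,q$ fixed this way, the equation for $\Rem$ is~\eqref{eq.R} with a bounded collection of additional terms produced by the cross-interactions of $\ue-\bar u^\varepsilon=u^0+\sqrt\varepsilon\{v\}+\dots$ with $\bar u^\varepsilon=\varepsilon\bar u_\#$. Each such term carries at least a factor $\varepsilon$, and in the one delicate case --- where $\bar u^\varepsilon$ meets the boundary layer $\sqrt\varepsilon\{v\}$ --- the normal-derivative commutator from~\eqref{eq.eval.nabla} does \emph{not} blow up, because $\bar u_\#\cdot n=0$ on $\BorderExt$ and hence $\bar u_\#\cdot n=\mathcal{O}(\varphi)$ on the support of $v$; this contribution is in fact $\mathcal{O}(\varepsilon^{3/2})$ before the fast-variable Jacobian gain, so the boundary-layer equation and its well-prepared dissipation are untouched and Lemmas~\ref{lemma.f} and~\ref{lemma.vanishing} apply verbatim. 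Using that $\bar u_\#$ and all its derivatives are bounded uniformly on $[0,T/\varepsilon]$, one verifies that estimates~\eqref{eq.estimate.a},~\eqref{eq.estimate.g},~\eqref{eq.estimate.f} are preserved (the new pieces are $\mathcal{O}(\varepsilon)$ or $\mathcal{O}(\varepsilon^{1/4})$ and, where they involve $v$, integrable in time by the decay of~\eqref{eq.v.estimate}), and that the energy estimate~\eqref{eq.R.gron3} still closes: the new transport-type term $\varepsilon(\bar u_\#\cdot\nabla)\Rem$ contributes, after integration by parts, only $-\tfrac{\varepsilon}{2}\int\bar\sigma_\#|\Rem|^2$, swallowed by the Grönwall constant over $[0,T/\varepsilon]$, while $\varepsilon(\Rem\cdot\nabla)\bar u_\#$ joins $A^\varepsilon$ with an $L^1((0,T/\varepsilon);L^\infty)$ norm of size $\varepsilon\cdot(T/\varepsilon)=\mathcal{O}(1)$. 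At the final time one has $u^0=u^1=0$, the technical profiles and $\sqrt\varepsilon\{v\}$ are $o(\varepsilon)$ in $L^2(\Omext)$ exactly as in the null case, and $\varepsilon\Rem(T/\varepsilon)=\mathcal{O}(\varepsilon^{1+1/8})$ by~\eqref{eq.R.gron3}; hence $\ue(T/\varepsilon,\cdot)=\bar u^\varepsilon(T/\varepsilon,\cdot)+o(\varepsilon)=\varepsilon\bar u(T,\cdot)+o(\varepsilon)$, i.e. $|u(T,\cdot)-\bar u(T,\cdot)|_{L^2(\Omext)}=o(1)\le\delta$ for $\varepsilon$ small enough --- and, as in Remark~\ref{remark.strong}, this holds for \emph{any} weak Leray solution corresponding to the chosen source terms.

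The main obstacle is not a single hard step but the bookkeeping of the previous paragraph: one must make sure that introducing $\bar u$ genuinely costs only lower-order terms everywhere --- in the boundary-layer equation and its well-prepared dissipation (in fact untouched, since the perturbation reaches the boundary layer only at order $\varepsilon^{3/2}$), in the controllability of the order-$\varepsilon$ profile $u^1$ (where the extra interior source must be reabsorbed by the flushing, using its time-reversal flexibility), and in each of the many terms of the remainder forcing and amplification operator. The hypotheses that $\bar u$ is smooth and bounded and that $\bar u\cdot n=0$ on $\BorderExt$ are precisely what make all of these estimates go through; no new analytical ingredient beyond Sections~\ref{section.slip}--\ref{section.remainder} is needed.
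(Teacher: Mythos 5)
Your proof is correct and follows essentially the same strategy as the paper's: the small-time-to-small-viscosity scaling, the return-method Euler flow $u^0$, the boundary layer profile with well-prepared dissipation (Lemmas~\ref{lemma.f} and~\ref{lemma.vanishing}), and the remainder energy estimate on $[0,T/\varepsilon]$ are all reused unchanged.

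The one genuine variation is in the bookkeeping of the order-$\varepsilon$ profile. The paper builds a single $\varepsilon$-dependent profile $u^{1,\varepsilon}$ solving the unmodified linearized Euler system~\eqref{eq.u1.ext}, steered from $\uinit$ to the pivot state $\bar u(\varepsilon T,\cdot)\approx\bar u(0,\cdot)$, and then \emph{switches expansions at $t=T$} so that the full Navier--Stokes trajectory $\varepsilon\bar u(\varepsilon t,\cdot)$ takes the place of $\varepsilon u^1$ during the long dissipation stage. You instead keep $\bar u^\varepsilon$ as a separate background for the whole interval and steer $u^1(T)=0$, at the cost of (i) an extra smooth source $-\left[(u^0\cdot\nabla)\bar u_\#+(\bar u_\#\cdot\nabla)u^0\right]$ in the $u^1$ equation and (ii) cross-interaction terms with $\bar u^\varepsilon$ in the forcing and amplification of the remainder --- of which, as you correctly isolate, only the commutator $(\bar u_\#\cdot n)\eval{\partial_z v}/\sqrt\varepsilon$ deserves care, and it gains a factor $\varphi=\sqrt\varepsilon z$ from the impermeability of $\bar u$. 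The two presentations are related by the change of unknown $u^{1,\varepsilon}\approx u^1+\bar u_\#$ modulo an $\mathcal O(\varepsilon)$ residue; the paper's avoids the cross-term bookkeeping, while yours avoids the mid-course change of expansion and makes the pivot-state point automatic. Both give the same $\mathcal O(\varepsilon^{1/8})$ rate.
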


\begin{proof}
We build a sequence $\use$ to achieve global approximate controllability to the 
trajectories. Still using the same scaling, we define it as:
\begin{equation}
 \use(t,x) := \frac{1}{\varepsilon} \ue \left(\frac{t}{\varepsilon},x\right),
\end{equation}
where $\ue$ solves the vanishing viscosity Navier-Stokes equation~\eqref{eq.nse}
with initial data $\varepsilon \uinit$ on the time interval $[0,T/\varepsilon]$.
As previously, this time interval will be used in two different stages. First,
a short stage of fixed length $T$ to achieve controllability of the Euler system 
by means of a return-method strategy. Then, a long stage $[T,T/\varepsilon]$, 
during which the boundary layer dissipates thanks to the careful choice of the 
boundary controls during the first stage. During the first stage, we use the 
expansion:
\begin{equation} \label{eq.exp.before}
 \ue = u^0 + \sqrt{\varepsilon} \eval{v} + \varepsilon u^{1,\varepsilon} + \ldots,
\end{equation}
where $u^{1,\varepsilon}$ is built such that 
$u^{1,\varepsilon}(0,\cdot) = \uinit$ and 
$u^{1,\varepsilon}(T,\cdot) = \bar{u}(\varepsilon T,\cdot)$. This is the main 
difference with respect to the null controllability strategy. Here, we need to 
aim for a non zero state at the first order. Of course, this is also possible 
because the state $u^{1,\varepsilon}$ is mostly transported by $u^0$ (which is 
such that the linearized Euler system is controllable). The profile 
$u^{1,\varepsilon}$ now depends on $\varepsilon$. However, since the reference 
trajectory belongs to $\mathcal{C}^\infty$, all required estimates can be made 
independent on $\varepsilon$. During this first stage, $u^{1,\varepsilon}$ 
solves the usual first-order system~\eqref{eq.u1.ext}. For large times 
$t \geq T$, we change our expansion into:
\begin{equation} \label{eq.exp.after}
 \ue = \sqrt{\varepsilon} \eval{v} 
 + \varepsilon \bar{u}(\varepsilon t,\cdot) + \ldots,
\end{equation}
where the boundary layer profile solves the homogeneous heat 
system~\eqref{eq.v.after} and $\bar{u}$ is the reference trajectory solving 
the true Navier-Stokes equation. As we have done in the case of null 
controllability, we can derive the equations satisfied by the remainders in the 
previous equations and carry on both well-posedness and smallness estimates 
using the same arguments. Changing expansion~\eqref{eq.exp.before}
into~\eqref{eq.exp.after} allows to get rid of some unwanted terms in the 
equation satisfied by the remainder. Indeed, terms such as 
$\varepsilon \Delta u^1$ or $\varepsilon (u^1\nabla) u^1$ don't appear anymore
because they are already taken into account by $\bar{u}$. One important remark
is that it is necessary to aim for $\bar{u}(\varepsilon T) \approx \bar{u}(0)$
at the linear order and not towards the desired end state $\bar{u}(T)$. Indeed, 
the inviscid stage is very short and the state will continue evolving while the 
boundary layer dissipates. This explains our choice of pivot state. We obtain:
\begin{equation}
 \left| \use(T) - \bar{u}(T) \right|_{L^2(\Omext)}
 = \mathcal{O}\left(\varepsilon^{\frac{1}{8}}\right),
\end{equation}
which concludes the proof of approximate controllability.
\end{proof}

We will also need the following regularization lemma:

\begin{lemma} \label{lemma.regularization.traj}
 Let $T > 0$. Let $\bar{u} \in \mathcal{C}^\infty([0,T] \times 
 \bar{\Omext})$ be a fixed smooth \fm{function with $\bar{u}\cdot n = 0$
     on $\BorderExt$}. 
 There exists a smooth function $C$, with $C(0) = 0$, such that,
 for any $r_* \in \ldiv$ and any $r \in \mathcal{C}_w^0([0,T];\ldiv) 
 \cap L^2((0,T); H^1(\Omext))$, weak Leray solution to:
 \begin{equation} \label{eq.ns.r.traj}
  \left\{
   \begin{aligned}
    \partial_t r - \Delta r + (\bar{u} \fm{\cdot} \nabla) r + (r 
    \fm{\cdot}\nabla) \bar{u}
    + (r \fm{\cdot} \nabla) r + \nabla \pi & = 0 && \quad \textrm{\fm{in} } 
    [0,T] \times \Omext, 
    \\
    \diverg r & = 0 && \quad \textrm{\fm{in} } [0,T] \times \Omext, \\
    r \cdot n & = 0 && \quad \textrm{\fm{on} } [0,T] \times \BorderExt, \\
    N(r) & = 0 && \quad \textrm{\fm{on} } [0,T] \times \BorderExt, \\
    r(0,\cdot) & = r_* && \quad \textrm{\fm{in} }\Omext,
   \end{aligned}
  \right.
 \end{equation}
 the following property holds true:
 \begin{equation}
  \exists t_r \in [0,T], \quad 
  \left|r(t_r,\cdot)\right|_{H^3(\Omext)}
  \leq C\left(\left|r_*\right|_{L^2(\Omext)}\right).
 \end{equation}
\end{lemma}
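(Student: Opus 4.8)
The plan is to run a standard parabolic smoothing/bootstrap argument on the perturbed Navier–Stokes system~\eqref{eq.ns.r.traj}, tracking the dependence of all constants on the size of the initial data in $L^2$. First I would establish the basic energy estimate: testing~\eqref{eq.ns.r.traj} against $r$, using $\diverg r = 0$, $r\cdot n = 0$ and $N(r)=0$ on $\BorderExt$ to control the boundary terms exactly as in~\eqref{eq.rr.boundary}–\eqref{eq.korn} (the smooth extension of $M$ and $\MW$ plus the Korn inequality~\eqref{eq.korn} absorb $\tfrac12|\nabla r|^2_{L^2}$ into the dissipation), and treating $(r\cdot\nabla)\bar u$ by boundedness of $\nabla\bar u$ and the antisymmetric term $(r\cdot\nabla)r\cdot r$ as vanishing after integration by parts (again using $\diverg r=0$, $r\cdot n=0$). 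A Grönwall argument then gives $\|r\|_{L^\infty((0,T);L^2)} + \|r\|_{L^2((0,T);H^1)} \le \Psi_0(|r_*|_{L^2})$ for some continuous $\Psi_0$ with $\Psi_0(0)=0$, since the system is linear-plus-quadratic with no forcing.

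Next I would upgrade the regularity on a subinterval. From the $L^2((0,T);H^1)$ bound there exists $\tau_1 \in (0,T/4)$ with $|r(\tau_1,\cdot)|_{H^1} \le \Psi_1(|r_*|_{L^2})$ by the mean value principle applied to $t\mapsto|r(t)|^2_{H^1}$. Restarting from $\tau_1$, I would perform the next energy estimate — testing against $-\Delta r$ (or against $\Delta$ of the Stokes operator adapted to the Navier boundary condition), using the maximal regularity estimates for the Stokes problem with Robin/Navier boundary conditions cited after~\eqref{eq.stokes.div} (references~\cite{MR2325694},~\cite{MR2387243},~\cite{MR2285430}), and controlling the trilinear term $\int (r\cdot\nabla)r\cdot\Delta r$ by Sobolev interpolation (Ladyzhenskaya-type inequalities in 2D, Sobolev embeddings in 3D together with the smallness window). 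This yields $r \in L^\infty((\tau_1,T/2);H^1)\cap L^2((\tau_1,T/2);H^2)$ with norms bounded by a continuous function of $|r_*|_{L^2}$ vanishing at $0$. Iterating this scheme two more times — extracting a good time $\tau_2$ where $|r(\tau_2)|_{H^2}$ is controlled, then $\tau_3$ where $|r(\tau_3)|_{H^3}$ is controlled — produces $t_r := \tau_3 \in (0,T)$ with $|r(t_r,\cdot)|_{H^3(\Omext)} \le C(|r_*|_{L^2(\Omext)})$, where $C$ is obtained by composing the continuous functions produced at each step.

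The main obstacle, as always in Navier–Stokes regularity in 3D, is absorbing the nonlinear term $(r\cdot\nabla)r$ in the higher-order estimates: one cannot get a global-in-time $H^1$ bound for large data, so the bootstrap must be run on a short time interval whose length depends on the data size, or one must exploit that the energy has already been dissipated to a controllable level before attempting the higher estimates. Here the logic is acceptable because we only claim the existence of \emph{some} time $t_r \in [0,T]$ at which the $H^3$ norm is controlled, not a uniform-in-time bound; so at each stage we only need the estimate to hold on a short subinterval before the next extraction. The remaining points are routine: the presence of the smooth drift $\bar u$ only adds lower-order terms with coefficients bounded in terms of $\|\bar u\|_{\mathcal C^\infty([0,T]\times\bar\Omext)}$ (which is a fixed constant, legitimately hidden in $C$), the Navier boundary condition $N(r)=0$ is handled uniformly by the cited Stokes maximal-regularity theory, and the compatibility condition $\diverg \bar u$ need not vanish but is fixed and smooth. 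Alternatively, and more concisely, one may simply invoke Temam's argument in~\cite[Remark~3.2]{MR645638} — which treats the harder Dirichlet case — and note that it adapts verbatim once the boundary terms are controlled via~\eqref{eq.korn} and the Stokes–Navier maximal regularity estimates.
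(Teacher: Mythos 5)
Your proposal follows essentially the same three-step bootstrap as the paper's proof: a weak-solution energy estimate giving $L^\infty_t L^2_x \cap L^2_t H^1_x$ control, extraction of a good time by the mean-value argument, a strong-solution energy estimate (testing against the projected Laplacian $\mathbbm{P}\Delta r$) giving $L^\infty_t H^1_x \cap L^2_t H^2_x$, and a further iteration to reach $H^3$ at some time $t_r$, all with constants depending continuously on $|r_*|_{L^2}$ and vanishing at zero. The invocation of Korn's inequality to absorb the boundary contribution from the Navier condition, and of Stokes maximal regularity with Robin/Navier conditions, is the right machinery and matches the paper.

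The one place where your plan is looser than the paper's execution is the third step. You write that one should \emph{iterate the scheme two more times}, but testing the equation against higher powers of the (projected) Laplacian quickly becomes awkward with the Navier boundary condition and the pressure. The paper instead differentiates~\eqref{eq.ns.r.traj} in time, tests the time-differentiated equation against $\partial_t r$ to get $\partial_t r \in L^\infty_t L^2_x \cap L^2_t H^1_x$ on a short window, and then reads~\eqref{eq.ns.r.traj} as a stationary Stokes problem for $r$ at a well-chosen time to lift the $L^2$ (resp. $H^1$) bound on $\partial_t r$ into an $H^2$ (resp. $H^3$) bound on $r$ by elliptic regularity. This time-differentiation trick also resolves the unwanted boundary term $\int_{\BorderExt}\tanpart{D(r)n}\cdot\tanpart{\partial_t r}$ that appears in the second step when $M$ is a general (not necessarily symmetric) matrix, an issue your plan does not flag. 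These are execution details rather than structural gaps, and your alternative of invoking Temam's argument~\cite[Remark~3.2]{MR645638} directly is indeed what the paper itself cites for the simpler Lemma~\ref{lemma.regularization}; but if you carry out the bootstrap by hand you should replace ``iterate two more times'' with the time-differentiation plus Stokes-elliptic bootstrap.
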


\begin{proof}
 This regularization lemma is \fm{easy} in our context because we assumed a lot 
 of smoothness on the reference trajectory~$\bar{u}$ and we are not demanding 
 anything on the time~$t_r$ at which the solution becomes smoother. We only 
 sketch out the steps that we go through. We repeatedly use the Korn
 inequality from~\cite[Theorem 10.2, page 299]{MR1742312} to derive estimates
 from the symmetrical part of gradients. Let $\mathbbm{P}$ denote the usual 
 orthogonal Leray projector on divergence-free vectors fields tangent to the 
 boundaries. We will use the fact $\left| \Delta r \right|_{L^2}
 \lesssim \left| \mathbbm{P} \Delta r\right|_{L^2}$ which follows from maximal
 regularity result for the Stokes problem with $\diverg r = 0$ \fm{in 
 $\Omext$}, $r\cdot n = 0$
 and $N(r) = 0$ \fm{on $\BorderExt$}. Our scheme is inspired 
 from~\cite{MR1798753}.

 \bigskip \noindent \textbf{Weak solution energy estimate}. We start with 
 the usual weak solution energy estimate \fm{(which is included in the
     definition of a weak Leray solution to~\eqref{eq.ns.r.traj})}, formally 
 multiplying~\eqref{eq.ns.r.traj} by~$r$ and integrating by parts. We obtain:
 \begin{equation} \label{eq.lemma.traj.1}
  \exists C_1, \fm{\mathrm{for~a.e.~}} t \in [0,T], \quad 
  \left| r(t) \right|_{L^2(\Omext)}^2 +
  \int_0^t \left| r(t') \right|_{H^1(\Omext)}^2 \dt'
  \leq C_1 \left| r_* \right|_{L^2(\Omext)}^2.
 \end{equation}
 In particular~\eqref{eq.lemma.traj.1} yields the existence of $0 \leq t_1 \leq 
 T/3$ such that:
 \begin{equation}
  \left| r(t_1) \right|_{H^1(\Omext)} 
  \leq \sqrt{\frac{3 C_1}{T}}
  \left| r_* \right|_{L^2(\Omext)}.
 \end{equation}
 
 \bigskip \noindent \textbf{Strong solution energy estimate}. We move on to the
 usual strong solution energy estimate, multiplying~\eqref{eq.ns.r.traj} 
 by~$\mathbbm{P} \Delta r$ and integrating by parts. We obtain:
 \begin{equation} \label{eq.lemma.traj.2}
  \exists C_2, \forall t \in [t_1,t_1 + \tau_1], \quad 
  \left| r(t) \right|_{H^1(\Omext)}^2 +
  \int_{t_1}^t \left| r(t') \right|_{H^2(\Omext)}^2 \dt'
  \leq C_2 \left| r(t_1) \right|_{H^1(\Omext)}^2,
 \end{equation}
 where $\tau_1 \leq T/3$ is a short existence time coming from the estimation 
 of the nonlinear term and bounded below as a function of
 $\left| r(t_1) \right|_{H^1(\Omext)}$. See~\cite[Theorem 6.1]{MR1798753}
 for a detailed proof. Our situation introduces an unwanted boundary term during 
 the integration by parts of 
 $\langle \partial_t r, \mathbbm{P} \Delta r \rangle$:
 \begin{equation}
  \int_{t_1}^t \int_{\BorderExt} \tanpart{D(r)n} \tanpart{\partial_t r}
  = - \int_{t_1}^t \int_{\BorderExt} (M r) \cdot \partial_t r.
 \end{equation}
 Luckily, the Navier boundary conditions helps us win one space derivative.
 When $M$ is a scalar (or a symmetric matrix), this term can be seen as a time
 derivative. In the general case, we have to conduct a parallel estimate for 
 $\partial_t r \in L^2$ by multiplying equation~\eqref{eq.ns.r.traj} by 
 $\partial_t r$, which allows us to maintain the 
 conclusion~\eqref{eq.lemma.traj.2}. In particular, this yields the existence
 of $0 \leq t_2 \leq 2T/3$ such that:
 \begin{equation}
  \left| r(t_2) \right|_{H^2(\Omext)} 
  \leq \sqrt{\frac{C_2}{\tau_1}}
  \left| r(t_1) \right|_{H^1(\Omext)}.
 \end{equation}
 
 \bigskip \noindent \textbf{Third energy estimate}. We iterate once more.
 We differentiate~\eqref{eq.ns.r.traj} with respect to time to obtain an 
 evolution equation on~$\partial_t r$ which we multiply by $\partial_t r$
 and integrate by parts. We obtain:
 \begin{equation} \label{eq.lemma.traj.3}
  \exists C_3, \forall t \in [t_2,t_2 + \tau_2], \quad 
  \left| \partial_t r(t) \right|_{L^2(\Omext)}^2 +
  \int_{t_2}^t \left| \partial_t r(t') \right|_{H^1(\Omext)}^2 \dt'
  \leq C_3 \left| \partial_t r(t_2) \right|_{L^2(\Omext)}^2,
 \end{equation}
 where $\tau_2$ is a short existence time bounded from below as a function 
 of $\left| \partial_t r(t_2) \right|_{L^2(\Omext)}$, which is bounded
 at time $t_2$ since we can compute it from equation~\eqref{eq.ns.r.traj}.
 Using~\eqref{eq.lemma.traj.3}, we deduce an $L^\infty(H^2)$ bound on $r$
 seeing~\eqref{eq.ns.r.traj} as a Stokes problem for $r$. Using the same
 argument as above, we find a time $t_3$ such that $r \in H^3$ with a
 quantitative estimate.
\end{proof}

Now we can prove Theorem~\ref{thm.trajectories}. Even though $\bar{u}$ is only
a weak trajectory on $[0,T]$, there exists $0 \leq T_1 < T_2 \leq T$ such that
$\bar{u}$ is smooth on $[T_1,T_2]$. This is a classical statement
(see~\cite[Remark 3.2]{MR645638} for the case of Dirichlet boundary conditions).
We will start our control strategy by doing nothing on $[0,T_1]$. Thus, the
weak trajectory $u$ will move from $\uinit$ to some state 
$u(T_1)$ which we will use as a new initial data. Then, we use our control to
drive $u(T_1)$ to $\bar{u}(T_2)$ at time $T_2$. After $T_2$, we choose null
controls. The trajectory $u$ follows $\bar{u}$. Hence, without loss of generality,
we can assume that $T_1 = 0$ and $T_2 = T$. This allows to work with a smooth
reference trajectory.

To finish the control strategy, we use the local result from~\cite{MR2224824}.
According to this result, there exists $\delta_{T/3} > 0$ such that, if we 
succeed to prove that there exists $0 < \tau < 2T/3$ such that 
$\left| u(\tau) - \bar{u}(\tau) \right|_{H^3(\Omext)} \leq \delta_{T/3}$,
then there exist controls driving $u$ to $\bar{u}(T)$ at time $T$. If we choose
null controls $r := u - \bar{u}$ satisfies the hypothesis 
of Lemma~\ref{lemma.regularization.traj}. Hence, there exists $\delta > 0$
such that $C(\delta) \leq \delta_{T/3}$ and we only need to build a trajectory
such that $\left| u(T/3) - \bar{u}(T/3) \right|_{\fm{L^2}(\Omext)} \leq \delta$,
which is precisely what has been proved in Lemma~\ref{lemma.thm.traj}.
This concludes the proof of Theorem~\ref{thm.trajectories}.

% ==============================================================================
\section*{Perspectives}
% ==============================================================================

The results obtained in this work can probably be extended in
following directions:
\begin{itemize}
	\item As stated in Remark~\ref{remark.strong}, \fm{for the 3D case,} it 
	would be interesting to prove that the constructed trajectory is a strong 
	solution \fm{of} the Navier-Stokes system (provided that the initial data 
	is smooth enough).	Since the first order profiles are smooth, the key 
	point is whether we can obtain strong energy estimates for the remainder 
	despite the presence of a boundary layer. \fm{In the uncontrolled setting,
    an alternative approach to the asymptotic expansion of~\cite{MR2754340}
    consists in introducing \emph{conormal Sobolev spaces} to perform energy
    estimates (see~\cite{MR2885569}).}
	\item As proposed in~\cite{MR2579376},~\cite{MR3022084} 
	then~\cite{2016arXiv160203045G}, respectively for the case of perfect 
	fluids (Euler equation) then very viscous fluids (stationary Stokes 
	equation), the notion of \emph{Lagrangian controllability} is interesting 
	for applications. It is likely that the proofs of these references can be 
	adapted to the case	of the Navier-Stokes equation with Navier boundary 
	conditions thanks to our method, since the boundary layers are located in a 
	small neighborhood of the boundaries of the domain which can be kept 
	separated from the Lagrangian trajectories of the considered movements.
    \fm{This adaptation might involve stronger estimates on the remainder.}
	\item As stated after Lemma~\ref{lemma.euler}, the hypothesis that 
	the control domain~$\Gamma$ intersects all connected components of the 
	boundary~$\partial\Omega$ of the domain is necessary to	obtain 
	controllability of the Euler equation. However, since we are dealing
	with the Navier-Stokes equation, it might be possible to release this 
	assumption, obtain partial results in its absence, or prove that it 
	remains necessary. This question is also linked to the possibility of 
	controlling a fluid-structure system where one tries to control the 
	position of a small solid immersed in a fluid domain \fm{by a control on a 
	part of the external border only}. Existence of weak 
	solutions for such a system is studied in~\cite{MR3272367}.
	\item At least for simple geometric settings of Open 
	Problem~(\hyperlink{openproblem}{OP}), our method might be adapted to the 
	challenging Dirichlet boundary condition. In this case, the 
	amplitude of the boundary layer is $\mathcal{O}(1)$ instead of 
	$\mathcal{O}(\sqrt{\varepsilon})$ here for the Navier condition. 
	This scaling deeply changes the equations satisfied by the boundary layer
	profile. Moreover, the new evolution equation satisfied by the remainder 
	involves a difficult term
	$\frac{1}{\sqrt{\varepsilon}} (\Rem \cdot n) \partial_z v$. 
	Well-posedness and smallness estimates for the remainder are much 
	harder and might involve analytic techniques. \fm{We refer to
    paragraph~\ref{par.prandtl} for a short overview of some of the difficulties
    to be expected.}
\end{itemize}
More generally speaking, we expect that the \textit{well-prepared dissipation} 
method can be applied to other fluid mechanics systems to obtain small-time 
global controllability results, as soon as asymptotic expansions for the 
boundary layers are known.

% ==============================================================================

\appendix

\section{Smooth controls for the linearized Euler equation}
\label{annex.euler}

In this appendix, we provide a constructive proof of Lemma~\ref{lemma.u1}. 
The main idea is to construct a force term~$\force^1$ such that 
$\nabla \times u^1(T,\cdot) = 0$ in~$\Omext$. Hence, the final time profile 
$U := u^1(T,\cdot)$ satisfies:
\begin{equation} \label{eq.divcurl}
\left\{
\begin{aligned}
\nabla\cdot U & = 0 && \quad \textrm{in } \Omext,\\
\rot U & = 0 && \quad \textrm{in } \Omext,\\
U \cdot n & = 0 && \quad \textrm{on } \BorderExt.\\
\end{aligned}
\right.
\end{equation}
For simply connected domains, 
this implies that $U = 0$ in $\Omext$. For 
multiply connected domains, the situation is more complex. 
Roughly speaking, a finite number of non vanishing solutions 
to~\eqref{eq.divcurl} must be ruled out by sending in appropriate 
vorticity circulations. For more details on this specific topic, 
we refer to the original references:~\cite{MR1380673} for 2D, 
then~\cite{MR1745685} for 3D. 
Here, we give an explicit construction of
a regular force term such that $\nabla \times u^1(T,\cdot) = 0$.
The proof is slightly different in the 2D and 3D settings, because the 
vorticity formulation of~\eqref{eq.u1.ext} is not exactly the same. 
In both cases, we need to build an appropriate partition of unity. 

\subsection{Construction of an appropriate partition of unity}

First, thanks to hypothesis~\eqref{eq.flow.out}, the continuity of the
flow~$\Phi^0$ and the compactness of~$\bar{\Omext}$, there exists~$\delta > 0$
such that:
\begin{equation}
 \forall x\in\bar{\Omext}, \exists t_x\in(0,T), 
 \quad
 \mathrm{dist} \left(\Phi^0(0,t_x,x), \bar{\Omega}\right) \geq \delta.
\end{equation}
Hence, there exists a smooth closed control 
region~$K \subset \bar{\Omext}$ such that $K \cap \bar{\Omega} = \emptyset$ and:
\begin{equation} \label{eq.flow.out.K}
\forall x\in\bar{\Omext}, \exists t_x\in(0,T), 
\quad
\Phi^0(0,t_x,x) \in K.
\end{equation}
Next, for each point $x \in \BorderExt \cap K$, we build a small square (resp. a 
cube) centered at $x$, included in $\R^d\setminus\bar{\Omega}$, such that one side 
(resp. one face) is included in $\Omext$ and
one side (resp. one face) is included in $\R^d \setminus \bar{\Omext}$. For each point
$x \in K \setminus \BorderExt$, we build a small square (resp. a cube) centered at $x$ 
and included in $\Omext\setminus\bar{\Omega}$. 
Since these two sets of open squares (resp. cubes) cover $K$, which
is compact, we can extract a finite number of squares (resp. cubes), labeled $C_{m}$
for $1 \leq m \leq M$, that cover $K$ and never 
intersect~$\bar{\Omega}$.

\begin{figure}[!ht]
    \centering
\begin{pspicture}(0,-2.0382237)(12.810851,2.0382237)
\definecolor{colour2}{rgb}{0.8,0.0,0.4}
\definecolor{colour3}{rgb}{0.9490196,0.9490196,0.9490196}
\definecolor{colour4}{rgb}{0.8,0.8,0.8}
\definecolor{colour1}{rgb}{0.0,0.0,0.6}
\definecolor{colour5}{rgb}{0.4,0.4,0.4}
\definecolor{colSmallSquares}{rgb}{0.2,0.2,0.2}
\psbezier[linecolor=colour2, linewidth=0.04, 
fillstyle=solid,fillcolor=colour3](7.5908513,2.0182238)(11.590852,2.0182238)(12.790852,0.4182238)(12.790852,-1.1817761873780086)(12.790852,-2.7817762)(10.671932,-0.6952897)(8.823284,-1.3655599)
\psbezier[linecolor=black, linewidth=0.04, 
fillstyle=solid,fillcolor=colour4](6.3983984,2.0131767)(3.1983986,2.0131767)(0.3983986,0.41317666)(0.3983986,-1.1868233571893319)(0.3983986,-2.7868233)(2.7983985,0.41317666)(5.1983986,-1.5868233)
\psbezier[linecolor=black, 
linewidth=0.04](6.3908515,2.0182238)(7.5908513,2.0182238)(7.9908514,2.0182238)(7.190851,2.0182238126219914)
\psbezier[linecolor=black, 
linewidth=0.04](5.190851,-1.5817761)(5.5908513,-1.9817762)(7.612473,-1.7655599)(8.812473,-1.3655599711617987)
\psbezier[linecolor=colour1, linewidth=0.04, linestyle=dashed, 
dash=0.17638889cm 0.10583334cm, 
fillstyle=solid,fillcolor=colour3](8.855717,-1.3547492)(7.6665273,-1.9061005)(5.5367975,1.8993049)(7.5908513,2.0182238126219914)
\rput[bl](9.686285,0.20387153){$\Omega$}
\rput[bl](4.1356792,0.7285686){$K$}
\rput[bl](7.729528,-0.3312636){\textcolor{colour1}{$\Gamma$}}
\rput{13.716166}(-0.20154776,-0.5544396){\psframe[linecolor=colour5, 
linewidth=0.01, dimen=outer](2.737518,-0.5817762)(1.6708515,-1.6484429)}
\psframe[linecolor=colour5, linewidth=0.01, dimen=outer](3.9926124,-0.20366298)(2.6059458,-1.5903296)
\psframe[linecolor=colour5, linewidth=0.01, dimen=outer](1.2801596,-1.2746063)(0.61349297,-1.9412731)
\rput{-61.61086}(1.4755716,-0.34111816){\psframe[linecolor=colour5, linewidth=0.01, dimen=outer](0.7850653,-1.0746064)(0.11839861,-1.741273)}
\rput{-77.32968}(2.499309,0.38014707){\psframe[linecolor=colour5, linewidth=0.01, dimen=outer](1.820537,-1.0383799)(1.1538703,-1.7050467)}
\rput{-13.425255}(0.35248125,0.93518054){\psframe[linecolor=colour5, linewidth=0.01, dimen=outer](4.482424,-0.69649315)(3.815757,-1.3631598)}
\rput{-24.218868}(0.93093336,1.8038056){\psframe[linecolor=colour5, linewidth=0.01, dimen=outer](5.002424,-0.934229)(4.3357573,-1.6008956)}
\rput{-32.04765}(1.6121639,2.5032544){\psframe[linecolor=colour5, linewidth=0.01, dimen=outer](5.497518,-1.2217761)(4.8308516,-1.8884429)}
\psframe[linecolor=colSmallSquares, linewidth=0.01, dimen=outer](2.737518,-0.10177619)(2.4308515,-0.40844285)
\psframe[linecolor=colSmallSquares, linewidth=0.01, dimen=outer](2.8575182,0.058223814)(2.5508513,-0.24844286)
\psframe[linecolor=colSmallSquares, linewidth=0.01, dimen=outer](3.057518,0.2582238)(2.7508514,-0.048442855)
\psframe[linecolor=colSmallSquares, linewidth=0.01, dimen=outer](3.217518,0.058223814)(2.9108515,-0.24844286)
\psframe[linecolor=colSmallSquares, linewidth=0.01, dimen=outer](2.8575182,0.4182238)(2.5508513,0.11155715)
\psframe[linecolor=colSmallSquares, linewidth=0.01, dimen=outer](2.737518,0.6182238)(2.4308515,0.31155714)
\psframe[linecolor=colSmallSquares, linewidth=0.01, dimen=outer](5.457518,-1.0617762)(5.1508512,-1.3684429)
\psframe[linecolor=colSmallSquares, linewidth=0.01, dimen=outer](5.2575183,-0.9417762)(4.9508514,-1.2484429)
\psframe[linecolor=colSmallSquares, linewidth=0.01, dimen=outer](5.497518,-0.8617762)(5.190851,-1.1684428)
\psframe[linecolor=colSmallSquares, linewidth=0.01, dimen=outer](5.377518,-0.62177616)(5.0708513,-0.92844284)
\psdots[linecolor=black, dotsize=0.2](3.3258677,-0.9071893)
\rput[bl](11.232231,1.5768445){$\partial\Omega \setminus \Gamma$}
\rput[bl](2.9930375,-2.0382237){$C_{m}$}
\rput[bl](1.2322308,1.5768445){$\partial\mathcal{O} \cap K$}
\end{pspicture}
    \caption{Paving the control region $K$ with appropriate squares.}
\end{figure}
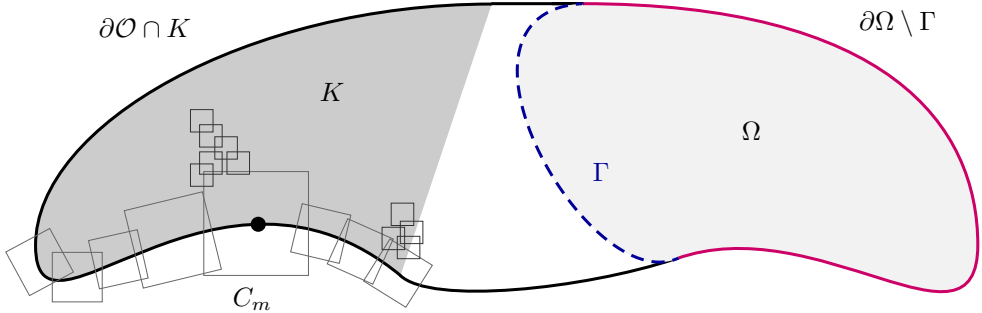

Thanks to~\eqref{eq.flow.out.K} and to the continuity of the flow~$\Phi^0$:
\begin{equation}
 \begin{split}
  \forall x\in\bar{\Omext}, \exists \epsilon_x > 0, &
  \exists t_x\in(\epsilon_x,T-\epsilon_x), \exists m_x \in \{1, \ldots M\}, 
  \forall t' \in (0,T), \forall x' \in \bar{\Omext}, \\
  & |t'-t_x|< \epsilon_x \textrm{ and } |x-x'| < \epsilon_x \Rightarrow \Phi^0\left(0,t',x'\right) \in C_{m_x}.
 \end{split}
\end{equation}
By compactness of~$\bar{\Omext}$, we can find $\epsilon > 0$ and 
balls $B_l$ for $1 \leq l \leq M$, covering~$\bar{\Omext}$, such that: 
\begin{equation} \label{eq.flow.out.cml}
\forall l \in \{1, \ldots L\}, 
\exists t_l \in (\epsilon,T-\epsilon),
\exists m_l \in \{1, \ldots M\}, 
\forall t \in (t_l - \epsilon, t_l + \epsilon),
\quad \Phi^0\left(0,t,B_l\right) \in C_{m_l}.
\end{equation}
Hence, each ball spends a positive amount of time within a given square 
(resp. cube) where we can use a local control to act on the $u^1$ profile.
This square (resp. cube) can be of one of two types as constructed above: 
either of \emph{inner} type, or of \emph{boundary} type. We also introduce
a smooth partition of unity $\eta_l$ for $1 \leq l \leq L$, such that
$0 \leq \eta_l(x) \leq 1$, $\sum \eta_l \equiv 1$ and each $\eta_l$
is compactly supported in $B_l$. Last, we introduce a smooth function 
$\beta : \R \rightarrow [0,1]$ such that $\beta \equiv 1$ 
on $(-\infty,\epsilon)$ and $\beta \equiv 0$ on $(\epsilon, +\infty)$.

\subsection{Planar case}

We consider the initial data 
$\uinit \in H^3(\Omext) \cap L^2_\mathrm{div}(\Omext)$ and we split it using the
constructed partition of unity.
Writing~\eqref{eq.u1.ext} in vorticity form, $\omega^1 := \nabla \times u^1$ can be computed as 
$\sum \omega_l$ where $\omega_l$ is the solution to:
\begin{equation} \label{eq.wl}
 \left\{
 \begin{aligned}
 \partial_t \omega_l + (\diverg u^0) \omega_l 
 + \left(u^0 \cdot \nabla\right) \omega_l & = \nabla \times \force_l
 && \quad \textrm{in } (0,T) \times \bar{\Omext}, \\
 \omega_l(0,\cdot) & = \nabla \times (\eta_l \uinit)
 && \quad \textrm{in } \bar{\Omext}.
 \end{aligned}
 \right.
\end{equation}
We consider $\bar{\omega}_l$, the solution to~\eqref{eq.wl} with $\force_l = 0$.
Setting $\omega_l := \beta(t-t_l) \bar{\omega}_l$ defines a solution to~\eqref{eq.wl}, 
vanishing at time $T$, provided that we can find $\force_l$ such that
$\nabla \times \force_l = \dot{\beta} \bar{\omega}_l$. The main difficulty
is that we need $\force_l$ to be supported in $\bar{\Omext} \setminus \Omega$.
Since $\dot{\beta} \equiv 0$ outside of $(-\epsilon, \epsilon)$,
$\dot{\beta}\bar{\omega}_l$ is supported in $C_{m_l}$ thanks
to~\eqref{eq.flow.out.cml} because the support of $\omega_l$ is transported 
by~\eqref{eq.wl}. We distinguish two cases.

\paragraph{Inner balls.}

Assume that $C_{m_l}$ is an inner square. Then $B_l$ does not intersect 
$\BorderExt$. Indeed, the streamlines of $u^0$ follow the 
boundary $\BorderExt$. If there existed $x \in B_l \cap \BorderExt$, then
$\Phi^0(0,t_l,x) \in \BorderExt$ could not belong to $C_{m_l}$, which would
violate~\eqref{eq.flow.out.cml}. Hence, $B_l$ must be an inner ball.
Then, thanks to Stokes'
theorem, the average of $\omega_l(0,\cdot)$ on $B_l$ is null (since the
circulation of $\eta_l \uinit$ along its border is null). Moreover, this 
average is preserved under the evolution by~\eqref{eq.wl} with $\force_l = 0$.
Thus, the average of $\bar{\omega}_l$ is identically null.
It remains to be checked that, if $w$ is a zero-average scalar function supported
in an inner square, we can find functions $(\force_1, \force_2)$ supported in 
the same square such that $\partial_1 \force_2 - \partial_2 \force_1 = w$.
Up to translation, rescaling and rotation, we can assume that the inner square 
is $C = [0,1]^2$. We define:
\begin{align}
\label{def.lift.2d.a}
a(x_2) & := \int_0^1 w(x_1, x_2) \dx_1, \\
\label{def.lift.2d.b}
b(x_2) & := \int_0^{x_2} a(x) \dx, \\
\label{def.lift.2d.f1}
\force_1(x_1, x_2) & := - c'(x_1) b(x_2), \\
\label{def.lift.2d.f2}
\force_2(x_1, x_2) & := - c(x_1) a(x_2) + \int_0^{x_1} w(x, x_2) \dx,
\end{align}
where $c : \R \rightarrow [0,1]$ is a smooth function with $c \equiv 0$ 
on $(-\infty,1/4)$ and $c \equiv 1$ on $(3/4,+\infty)$.
Thanks to~\eqref{def.lift.2d.a}, $a$ vanishes for $x_2 \notin [0,1]$.
Thanks to~\eqref{def.lift.2d.b}, $b$ vanishes for $x_2 \leq 0$ 
(because $a(x_2) = 0$ when $x_2 \leq 0$) and for $x_2 \geq 1$ 
(because the $b(x_2) = \int_C w = 0$ for $x_2 \geq 1$). 
Thanks to~\eqref{def.lift.2d.f1} and~\eqref{def.lift.2d.f2}, 
$(\force_1,\force_2)$ vanish outside of $C$ and 
$\partial_1 \force_2 - \partial_2 \force_1 = w$. Thus, we can
build $\force_l$, supported in $C_{m_l}$ such that 
$\nabla \times \force_l = \dot{\beta} \bar{\omega}_l$.

Moreover, thanks to this explicit construction, the spatial 
regularity of $\force_l$ is at least as good as that of~$\bar{\omega}_l$,
which is the same as that of $\nabla \times (\eta_l \uinit)$.
If $\uinit \in H^3(\Omext)$, then 
$\force_l \in \mathcal{C}^1([0,T],H^1(\Omext)) \cap 
\mathcal{C}^0([0,T],H^2(\Omext))$. This remains true after 
summation with respect to $1 \leq l \leq L$ and for 
the following constructions exposed below. If the initial data
$\uinit$ was smoother, we could also build smoother controls.

\paragraph{Boundary balls.}

Assume that $C_{m_l}$ is a boundary square. Then, $B_l$ can either be
an inner ball or a boundary ball and we can no longer assume that the 
average of $\bar{\omega}_l$ is identically null. However, the same
construction also works. Up to translation, rescaling and rotation,
we can assume that the boundary square is $C = [0,1]^2$,
with the side $x_2 = 0$ inside $\Omext$ and the side $x_2 = 1$
in $\R^2 \setminus \Omext$:
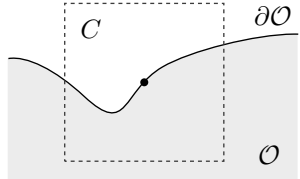
\begin{figure}[!ht]
    \centering
    \psset{xunit=.5pt,yunit=.5pt,runit=.5pt}
    \begin{pspicture}(248.03149414,141.73228455)
    {
        \newrgbcolor{curcolor}{0.9254902 0.9254902 0.9254902}
        \pscustom[linestyle=none,fillstyle=solid,fillcolor=curcolor]
        {
            \newpath
            \moveto(229.08751,113.36323909)
            \curveto(200.62696,113.17436909)(151.82094,100.87410909)(130.90001,88.55073909)
            \curveto(100.46956,70.62583909)(105.78996,35.07237909)(72.337505,66.39448909)
            \curveto(50.727455,86.62837909)(29.277698,97.15772909)(15.650008,94.92573909)
            \lineto(15.650008,2.01947909)
            \lineto(232.58751,2.01947909)
            \lineto(232.58751,113.33198909)
            \curveto(231.45753,113.36878909)(230.29456,113.37138909)(229.08751,113.36318909)
            \closepath
        }
    }
    {
        \newrgbcolor{curcolor}{0 0 0}
        \pscustom[linewidth=1,linecolor=curcolor]
        {
            \newpath
            \moveto(233.094,112.83572909)
            \curveto(231.96402,112.87252909)(230.80105,112.87502909)(229.594,112.86692909)
            \lineto(229.594,112.86697909)
            \curveto(201.13345,112.67810909)(152.32743,100.37784909)(131.4065,88.05447909)
            \curveto(100.97605,70.12957909)(106.29645,34.57607909)(72.843995,65.89822909)
            \curveto(51.233945,86.13211909)(29.784188,96.66146909)(16.156498,94.42947909)
        }
    }
    {
        \newrgbcolor{curcolor}{0.23529412 0.23529412 0.23529412}
        \pscustom[linewidth=1,linecolor=curcolor,linestyle=dashed,dash=3 3]
        {
            \newpath
            \moveto(58.29551697,136.043008)
            \lineto(177.37949371,136.043008)
            \lineto(177.37949371,16.95903888)
            \lineto(58.29551697,16.95903888)
            \closepath
        }
    }
    {
        \newrgbcolor{curcolor}{0 0 0}
        \pscustom[linestyle=none,fillstyle=solid,fillcolor=curcolor]
        {
            \newpath
            \moveto(120.83751352,76.50106053)
            \curveto(120.83751352,74.84420629)(119.49436778,73.50106055)(117.83751356,73.50106055)
            \curveto(116.18065933,73.50106055)(114.8375136,74.84420629)(114.8375136,76.50106053)
            \curveto(114.8375136,78.15791476)(116.18065933,79.5010605)(117.83751356,79.5010605)
            \curveto(119.49436778,79.5010605)(120.83751352,78.15791476)(120.83751352,76.50106053)
            \closepath
        }
    }
    \rput[bl](203,15){$\mathcal{O}$}
    \rput[bl](200,120){$\partial\mathcal{O}$}
    \rput[bl](70,110){$C$}
    \end{pspicture}
    \caption{A boundary square}
\end{figure}

We start by extending $w$ from $C \cap \bar{\Omext}$ to $C$,
choosing a regular extension operator. Then, we use the same 
formulas \eqref{def.lift.2d.a},~\eqref{def.lift.2d.b},
\eqref{def.lift.2d.f1} and~\eqref{def.lift.2d.f2}. One checks that
this defines a force which vanishes for $x_1 \leq 0$,
for $x_1 \geq 1$ and for $x_2 \leq 0$.

\subsection{Spatial case}

In 3D, each vorticity patch $\omega_l$ satisfies:
\begin{equation} \label{eq.wl.3d}
\left\{
\begin{aligned}
\partial_t \omega_l + \nabla \times (\omega_l \times u^0) & = \nabla \times \force_l
&& \quad \textrm{in } (0,T) \times \bar{\Omext}, \\
\omega_l(0,\cdot) & = \nabla \times (\eta_l \uinit)
&& \quad \textrm{in } \bar{\Omext}.
\end{aligned}
\right.
\end{equation}
Equation~\eqref{eq.wl.3d} preserves the divergence-free condition
of its initial data. Hence, proceeding as above, the only thing that 
we need to check is that, given a vector field 
$w = (w_1, w_2, w_3) : \R^3 \rightarrow \R^3$ such that:
\begin{align} 
 \label{jmc.support-g}
 \mathrm{support}(w) & \subset (0,1)^3, \\
 \label{jmc.div-g=0}
 \diverg(w) & =0,
\end{align}
we can find a vector field $\force = (\force_1, \force_2, \force_3)
: \R^3 \rightarrow \R^3$ such that:
\begin{align}
\label{jmc.eq-1}
\partial_2 \force_3 - \partial_3 \force_2 &= w_1,
\\
\label{jmc.eq-2}
\partial_3 \force_1-\partial_1 \force_3 &=w_2,
\\
\label{jmc.eq-3}
\partial_1 \force_2-\partial_2 \force_1&=w_3,
\\
\label{jmc.support-f}
\text{support} (\force) &\subset (0,1)^3.
\end{align}
As in the planar case, we distinguish the case of inner and boundary cubes.

\paragraph{Inner cubes.}

Let $a\in C^\infty(\R,\R)$ be such that:
\begin{align}
\label{jmc.int-a=1}
\int_{0}^{1} a(x)\dx&=1,
\\
\label{jmc.support-a}
\text{support} (a)&\subset (0,1).
\end{align}
We define:
\begin{align}
\label{jmc.def-f1}
\force_1(x_1,x_2,x_3)&:=a(x_1)h(x_2,x_3) \\
\label{jmc.def-f2}
\force_2(x_1,x_2,x_3)&:= \int_{0}^{x_1} (\partial_2 \force_1 + w_3)(x,x_2,x_3)\dx, \\
\label{jmc.def-f3}
\force_3(x_1,x_2,x_3)&:= \int_{0}^{x_1} (\partial_3 \force_1 -w_2)(x,x_2,x_3)\dx,
\end{align}
where $h:\R^2\to \R$ will specified later on.
From \eqref{jmc.def-f2}, one has \eqref{jmc.eq-3}. 
From \eqref{jmc.def-f3}, one has \eqref{jmc.eq-2}. 
From \eqref{jmc.support-g}, \eqref{jmc.div-g=0}, \eqref{jmc.def-f2} 
\eqref{jmc.def-f2} and \eqref{jmc.def-f3}, one has \eqref{jmc.eq-1}. 
Using \eqref{jmc.support-g}, \eqref{jmc.def-f1}, \eqref{jmc.def-f2} and
\eqref{jmc.def-f3} one checks that \eqref{jmc.support-f} holds if $h$ satisfies
\begin{align}
\label{jmc.support-h}
\text{support}(h) &\subset (0,1)^2, 
\\
\label{jmc.h2=}
\partial_2 h (x_2,x_3)&=W_2(x_2,x_3),
\\
\label{jmc.h3=}
\partial_3 h(x_2,x_3)&=W_3(x_2,x_3),
\end{align}
where
\begin{align}
\label{jmc.def-G2}
W_2(x_2,x_3):= -\int_{0}^{1} w_3(x,x_2,x_3) \dx,\\
\label{jmc.def-G3}
W_3(x_2,x_3):=\int_{0}^{1} w_2(x,x_2,x_3) \dx.
\end{align}
From \eqref{jmc.support-g}, \eqref{jmc.div-g=0}, \eqref{jmc.def-G2} and \eqref{jmc.def-G3}, one has:
\begin{gather}\label{jmc.support-G}
\text{support}(W_2) \subset (0,1)^2, \, 
\text{support}(W_3)\subset (0,1)^2, \\
\label{jmc.curl-G=0}
\partial_2 W_3-\partial_3 W_2=0.
\end{gather}
We define $h$ by
\begin{equation}
\label{jmc.defh}
h(x_2,x_3):=\int_{0}^{x_2} W_2(x,x_3) \dx,
\end{equation}
so that \eqref{jmc.h2=} holds. From \eqref{jmc.support-G}, \eqref{jmc.curl-G=0} 
and \eqref{jmc.defh}, one gets \eqref{jmc.h3=}. Finally, from \eqref{jmc.def-G2}, 
\eqref{jmc.support-G} and \eqref{jmc.defh} one sees that \eqref{jmc.support-h} 
holds if and only if:
\begin{equation}\label{jmc.property-k}
k(x_3)=0,
\end{equation}
where
\begin{equation}\label{jmc.def-k}
k(x_3):=\int_{0}^{1}\int_{0}^{1} w_3(x_1,x_2,x_3)\dx_1 \dx_2.
\end{equation}
Using \eqref{jmc.support-g}, \eqref{jmc.div-g=0} and \eqref{jmc.def-k}, 
one sees that $k' \equiv 0$ and $\text{support}(k) \subset (0,1)$,
which implies \eqref{jmc.property-k}.

\paragraph{Boundary cubes.}

Now we consider a boundary cube. Up to translation, scaling and rotation,
we assume that we are considering the cube $C = [0,1]^3$ with the face
$x_1 = 0$ lying inside $\Omext$ and the face $x_1 = 1$ lying in 
$\R^3 \setminus \Omext$. Similarly as in the planar case, we choose a 
regular extension of $w$ to $C$. We set $\force_1 = 0$ and
we define $\force_2$ by \eqref{jmc.def-f2} and 
$\force_3$ by \eqref{jmc.def-f3}. 
One has \eqref{jmc.eq-1}, \eqref{jmc.eq-2}, \eqref{jmc.eq-3}
in $C \cap \bar{\Omext}$ with $\text{support}(\force) \cap \bar{\Omext} \subset C$.

% ==============================================================================

\bibliographystyle{plain}
\bibliography{navier_slip}

\end{document}